\newtheorem{thm}{Theorem}[section]
\newtheorem{lem}[thm]{Lemma}
\newtheorem{con}[thm]{Condition}
\def\P{\mathbb{P}}
\def\Z{\mathbb{Z}}
\def\F{\mathbb{F}}
\def\O{\mathcal{O}}
\def\T{\mathscr{T}}
\def\Spf{\mathop{\mathrm{Spf}}\nolimits}
\def\Sp{\mathop{\mathrm{Sp}}\nolimits}
\def\Spec{\mathop{\mathrm{Spec}}\nolimits}
\def\PGL{\mathop{\mathrm{PGL}}\nolimits}
\def\dist{\mathop{\mathrm{dist}}\nolimits}
\def\Mod{\mathop{\mathrm{mod}}\nolimits}
\def\vert{\mathop{\mathrm{vert}}\nolimits}
\def\edge{\mathop{\mathrm{edge}}\nolimits}
\def\Aut{\mathop{\mathrm{Aut}}\nolimits}
\def\Gal{\mathop{\mathrm{Gal}}\nolimits}
\def\sp{\mathop{\mathrm{sp}}\nolimits}
\def\val{\mathop{\mathrm{val}}\nolimits}
\def\id{\mathop{\mathrm{id}}\nolimits}
\numberwithin{equation}{section}
\numberwithin{figure}{section}
\DeclareRobustCommand{\genericinterval}[2]{%
 \@ifstar{\genericinterval@star{#1}{#2}}{\genericinterval@nostar{#1}{#2}}}
\newcommand{\genericinterval@star}[4]{\mathopen{}\mathclose{\left#1#3,#4\right#2}}
\newcommand{\genericinterval@nostar}[4]{\mathopen{#1}#3,#4\mathclose{#2}}
\newcommand{\intervalcc}{\genericinterval[]}
\newcommand{\intervaloo}{\genericinterval][}
\newcommand{\intervalco}{\genericinterval[[}
\begin{document}
\title[Cyclic coverings of the projective line by Mumford curves]{Cyclic coverings of the projective line by Mumford curves  in positive characteristic}
\author{Ryota Mikami}
\address{Department of Mathematics, Faculty of Science, Kyoto University, Kyoto 606-8502, Japan}
\email{ryo-mkm@math.kyoto-u.ac.jp}
\subjclass[2010]{Primary 14H50; Secondary 11D41; Tertiary 14G22}
\keywords{Mumford curve, cyclic covering, rigid analytic geometry}
\date{\today}
\maketitle

\begin{abstract}
 We study the rigid analytic geometry of cyclic coverings of the projective line.
 We determine the defining equation of cyclic coverings of degree $p$ of the projective line by Mumford curves
over complete discrete valuation fields of positive characteristic $p$. Previously, Bradley studied that of  any degree over  non-archimedean local fields of characteristic zero.
\end{abstract}

\section{Introduction}
 A geometrically connected smooth projective curve of genus $\geq2$ over a complete discrete valuation field $(K, | \cdot | )$ is  called a \textit{Mumford curve} if it is analytically isomorphic to a rigid analytic space of the form  $(\mathbb{P}^1 \setminus \mathcal{L} )/ \Gamma$, where $\Gamma \subset \PGL_2(K)$ is a Schottky group and  $\mathcal{L} \subset \P^1$  is the set of limit points.
 Recall that  a finitely generated torsion-free discontinuous  subgroup of $\PGL_2(K)$  is called  a \textit{Schottky group} if it has infinitely many limit points in $\P^1$. 
   Mumford curves are algebraically characterized   by the property that they have split degenerate reduction  \cite[Theorem 3.3, Theorem 4.20]{Mum}. Cyclic coverings of $\P^1$ by Mumford curves were studied by Bradley and van Steen; see \cite{Bra07}, \cite{Ste82}.  When $K$ is a non-archimedean local field of characteristic zero, Bradley studied  the defining equation of cyclic coverings of any degree of $\P^1$ by  Mumford curves \cite[Theorem 4.3]{Bra07}.
 
 In this paper,  we focus on cyclic coverings of degree $p$ of $\P^1$ by Mumford curves  in  characteristic $p>0$. 
 Let $$\varphi \colon X \rightarrow \P^1$$ be a cyclic covering of degree $p$ over $K$.
 Assume that $K$ is of characteristic $p>0$. 
 In \cite[Proposition 3.1]{Ste82}, van Steen showed  that if $X$ is a Mumford curve,  by replacing $K$ by its finite extension, it is defined by an equation of the form
\begin{equation}\label{equation1}
y^p-y=\sum_{i=1}^{r}  \frac{\lambda_i}{x - a_i}
\end{equation}
for some $\lambda_i \in K^\times $ and $  a_i \in K 
\  (1\leq i \leq r )$ 
satisfying $ a_i \neq a_j \  \text{for }  \  i \neq j$.
In the following, we assume that $X$ is defined by the equation \eqref{equation1}.
The cyclic covering $X$ has genus $(p-1)(r-1)$ \cite[Proposition 1.3]{Ste82}.
Thus, we also assume $(p-1)(r-1) \geq 2$, i.e., $r\geq 3$, or $r =2$ and $p\geq 3$.
 The main theorem of this paper is the following:

\begin{thm} \label{main}
Let  $(K, | \cdot | )$ be  a complete discrete valuation field of positive characteristic $p>0$.
 Let $\varphi \colon X \rightarrow \P^1$ be a cyclic covering of degree $p$ over $K$ defined by the equation \eqref{equation1} for $r\geq 3$, or $r =2$ and $p\geq 3$. 
  Then the following conditions are equivalent:
 \begin{itemize}
 \item $X$ is a Mumford curve over a finite extension of $K$.
 \item $\lvert \lambda_i \lambda_j \rvert < \lvert a_i-a_j \rvert^2$ for any $i \neq j$.
  \end{itemize}  \end{thm}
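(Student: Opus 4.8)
The plan is to identify the property of being a Mumford curve with the totally split degenerate case of semistable reduction: by the criterion quoted in the introduction, $X$ is a Mumford curve over a finite extension of $K$ if and only if, after such an extension, its stable reduction is a union of rational curves meeting transversally. I would therefore study the stable reduction of $\varphi$ by simultaneous semistable reduction of the cover. Choose a semistable model of $\P^1$ whose special fibre is a finite tree of projective lines, indexed by disks and annuli refining the tree spanned by $\{a_1,\dots,a_r,\infty\}$; form the normalization of this model in the function field $K(X)$; and read off, over the generic point of each component $C$, the residual Artin--Schreier cover $\bar z^{\,p}-\bar z=\bar f_C$. Here $f=\sum_i \lambda_i/(x-a_i)$ and $f_C$ is the optimal representative of $f$ modulo $\{g^p-g\}$ that is bounded on $C$, using that in characteristic $p$ one may freely replace $y$ by $y-g$. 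Since an Artin--Schreier cover of $\P^1$ with $m$ poles of order prime to $p$ has genus $(p-1)(m-1)$, the curve $X$ is a Mumford curve precisely when the model can be chosen so that every residual cover is rational, i.e. $\bar f_C$ has at most one pole.

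The heart of the argument is a local computation on annuli. For a single pole, $y^p-y=\lambda/t$ has its ramification concentrated on the circle $|t|=|\lambda|$: the substitution $t=\lambda\tau$ turns it into $y^p-y=1/\tau$, whose reduction $\bar z^{\,p}-\bar z=1/\bar\tau$ is rational with one branch point. For a pair $a_i,a_j$ with $|\lambda_i|\geq|\lambda_j|$, I would pass to the critical disk of the larger pole and normalize by $u=(x-a_i)/\lambda_i$; the pair then reappears as poles of residues $1$ and $\mu=\lambda_j/\lambda_i$ separated by $\epsilon=(a_j-a_i)/\lambda_i$, with $|\mu|=|\lambda_j|/|\lambda_i|$ and $|\epsilon|=|a_i-a_j|/|\lambda_i|$. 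A component carrying two branch points of the residual cover, hence of genus $p-1$, occurs at some radius $\sigma$ around the smaller pole exactly when the inner pole is still visible, $\sigma\leq|\mu|$, while the field of the normalized dominant pole already grows, $\sigma\geq|\epsilon|^2$. Such a $\sigma$ exists if and only if $|\mu|\geq|\epsilon|^2$, i.e. if and only if $|\lambda_i\lambda_j|\geq|a_i-a_j|^2$; conversely, when $|\mu|<|\epsilon|^2$ the two poles are separated by components each seeing, after an Artin--Schreier substitution, a single pole. The appearance of the square and of the product $\lambda_i\lambda_j$ is explained by the scale-invariance of this dichotomy under the normalization.

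With this computation in hand the two implications proceed as follows. If $|\lambda_i\lambda_j|<|a_i-a_j|^2$ for all $i\neq j$, I would assemble the local models into a global semistable model of $\P^1$ — placing the critical component of each pole and the separating components of the tree of $\{a_i,\infty\}$, and refining recursively whenever one critical disk contains another pole — and check, using the inequalities to annihilate all cross terms, that every residual cover is rational; the special fibre of the normalization is then a union of rational curves, so $X$ has totally split degenerate reduction and is a Mumford curve over a finite extension. For the converse I argue by contraposition: if some pair violates the inequality, the local computation produces a component of genus $p-1$ in the (unique) stable reduction, which is therefore not totally degenerate, so $X$ is not a Mumford curve.

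The main obstacle is the annulus computation and its globalization. Controlling the optimal representative $f_C$ on each annulus — equivalently, computing the slopes of the ramification and showing that the threshold is exactly $|\lambda_i\lambda_j|=|a_i-a_j|^2$ — requires care, because a substitution $y\mapsto y-g$ that cleans up one component disturbs its neighbours, and the large constant coming from the dominant pole must be tracked along the tree. The second delicate point is the passage from two poles to general $r$: I must show that the condition stays purely pairwise, that is, that clusters of three or more poles do not shift the threshold. I expect to handle this by induction along the tree, separating the poles one normalization at a time so that at each stage only the dominant pole and a single competitor interact, the remaining poles contributing only terms that the inequalities force to reduce to zero.
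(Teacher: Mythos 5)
Your forward direction (inequalities imply Mumford) is essentially the paper's argument in different clothing: the paper also builds an explicit covering of $\P^1$ by affinoids adapted to the configuration of the $a_i$, shows that on each piece all but one of the terms $\lambda_i/(x-a_i)$ can be absorbed into a constant plus something of spectral norm $<1$, and reads off the canonical reduction of $\varphi^{-1}(U_n)$ as a union of rational curves with ordinary double points. Your converse, however, takes a genuinely different route. The paper does \emph{not} compute the stable reduction in the bad case; it uses van Steen's presentation of $X$ as $\Omega/\Gamma$ with $N=\langle s_1,\dots,s_r\rangle$ acting on the Bruhat--Tits tree, normalizes the mirrors $M(s_i)$, writes $x$ as an infinite product over $N$, identifies $\lambda_1,\lambda_2$ with the $p$-th Taylor coefficients of $x$ at the fixed points, and bounds them by $\lvert\lambda_1\lambda_2\rvert\leq\lvert\eta\rvert^{-1}<1$ where $-\mathrm{val}_K(\eta)$ is the tree-distance between the mirrors. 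That argument only ever assumes $X$ \emph{is} a Mumford curve and never touches the degenerate reduction directly.

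The gap is in your contrapositive. Identifying the range $\lvert\epsilon\rvert^2\leq\sigma\leq\lvert\mu\rvert$ where both poles are ``visible'' does not yet produce a positive-genus component: at a radius strictly inside that range both terms of $f$ have absolute value $>1$, so the naive residual equation is an inseparable (radicial) cover of the component, not a genus-$(p-1)$ Artin--Schreier cover, and the normalization there is not even semistable. To conclude you must actually compute the optimal representative $f-(g^p-g)$ on each annulus (equivalently the piecewise-linear different/depth function of the wild cover), locate where its slope changes, check that the refined model is semistable, and exhibit a component whose residual cover is a \emph{separable} Artin--Schreier cover with at least two poles of order prime to $p$. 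This is essential because the alternative to be excluded is precisely that the whole genus $(p-1)(r-1)$ is carried by loops of the dual graph --- which is what happens in the good case --- so ``the genus must go somewhere'' is not an argument. You also need to rule out interference from the remaining $r-2$ poles, some of which may lie inside the critical annulus of the bad pair; your proposed induction ``one normalization at a time'' is plausible but is exactly where the bookkeeping lives, and it is not carried out. (The paper's remark that Tsushima can prove the case $r=2$ ``by computing reductions explicitly'' suggests your route is viable at least for $r=2$, but as written the key computation is asserted rather than done.)
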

 
  Previously,  van Steen studied the defining equation of  hyperelliptic curves which are Mumford curves \cite{Ste83}.
 When $p=r=2$,  the cyclic covering $X$ has genus $1$ and van Steen obtained results similar to Theorem \ref{main}; see  \cite[Section 4]{Ste83}.
 Tsushima told the author that for any $p$,   Theorem \ref{main} for $r=2$ can also be proved  by computing reductions explicitly.
 
  Note that for any cyclic covering $\varphi \colon X \to \P^1$ by a Mumford curve $X$ over $K$, 
  there exists a surjective homomorphism from a  discrete subgroup  of $\PGL_2(K)$ generated by finitely many elements of finite order to the Galois group of $\varphi$; see  \cite[Chapter 8]{GerritzenvanderPut:SchottkyMumford}.  
  Since the order of any element of finite order of $\PGL_2(K)$ is not divisible by $p^2$, 
   the degree $\deg \varphi$ is not divisible by $p^2$.
  When $p$ does not divide $\deg \varphi$,  we can use  Bradley's method  in \cite{Bra07} to study the defining equation of $X$.
     
  The organization of this paper is as follows. In Section 2, we review some basic properties of Mumford curves.
 In Section 3, we summarize some facts about cyclic coverings of $\P^1$ by Mumford curves proved by van Steen \cite{Ste82}. 
The proof of Theorem \ref{main} is given in Section 4 and Section 5.

\section{Basic properties of Mumford curves} 
In this paper, we use the language of rigid analytic geometry. We refer to \cite{Lutk} for basic notations on rigid analytic geometry and \cite{GerritzenvanderPut:SchottkyMumford} for those on Mumford curves used in this paper.

Let $(K, | \cdot |)$ be a complete discrete valuation field of characteristic $p>0$
and $K^{\circ}$ (resp.\ $k$) its valuation ring (resp.\ residue field). We fix a uniformizer $ \pi \in K^\circ $.
We fix an algebraic closure $\overline{K} $ of $K$. We also denote the extension of the  valuation $| \cdot |$ on $K$ to $\overline{K}$ by the same symbol.
 We denote by $\val_K(\cdot)$ the normalized additive valuation on $K$, i.e.,  we have $\val_K(\pi)=1$.
 
 Let $A$  be an affinoid algebra  over $K$. For an element $f \in A$, let $$\lvert f \rvert_{\sp}:=\sup \{\, \lvert f (x)\rvert \mid x \in \Sp A \, \}$$ be the \textit{spectral seminorm} of $f$. (It is called the \textit{supremum norm} in \cite[Section 1.4]{Lutk}.)
 We put  
 \begin{align*}
 A^\circ &:= \{\, f \in A \mid \lvert f \rvert_{\sp} \leq1\, \}, \\
 A^{\circ \circ} &:= \{\, f \in A \mid \lvert f \rvert_{\sp} <1\, \}.
 \end{align*}
We denote the residue ring of an affinoid algebra $A$ by $ \overline{A}:=A^\circ / A^{\circ \circ}$. The affine scheme $\Spec \overline{A}$ over $k$ is called  the \textit{canonical reduction} of the affinoid space $\Sp A$. We put $ \overline{\Sp A} := \Spec \overline{A}$. (For details, see \cite[Section 1.4]{Lutk}.)

For an affinoid algebra $A$  over $K$, an algebra $B$ of topologically finite type over $K^\circ$ is called a \textit{$K^\circ$-model} of $A$ if $B$ is flat over $K^\circ$ and $B\otimes_{K^\circ}K \cong A$; see \cite[Definition 3.3.1]{Lutk}.

 
 

 A subgroup $N $ of $ \PGL_2(K)$ is called \textit{discontinuous} if the set of limit points  of the canonical action of $N$ on $\P^1(K)$ does not equal to $\P^1(K)$ and the closure of $Na$ is compact for any $a \in \P^1(K)$. Obviously, a discontinuous subgroup is discrete. 
A finitely generated torsion-free discontinuous  subgroup of $\PGL_2(K)$  is called  a \textit{Schottky group} if it has infinitely many limit points in $\P^1$. 
  A Schottky group $\Gamma$ is a free group; see \cite[Chapter 1]{GerritzenvanderPut:SchottkyMumford}.
 We put $$\Omega := \P^1 \setminus \{ \, \text{the limit points of} \, \ \Gamma\}  ,$$
 which is a one-dimensional rigid analytic space over $K$.
   The quotient $\Omega / \Gamma$ is isomorphic to the analytification of a geometrically connected smooth projective curve $X_\Gamma$ of genus $\geq2$ over $K$. A smooth projective  curve of genus $\geq2$ over $K$ which is  isomorphic to $X_\Gamma$ for some Schottky group $\Gamma \subset \PGL_2(K)$  is called a \textit{Mumford curve}. We identify projective curves over $K$ and their analytifications by the ``GAGA''-correspondence.  Concerning the automorphism group, we have a natural isomorphism 
 $$\Aut ( X_\Gamma) \cong N_{\PGL_2(K)}(\Gamma) / \Gamma ,$$
  where $N_{\PGL_2(K)}(\Gamma)$ is the normalizer of $\Gamma$ in $\PGL_2(K)$; see \cite[Chapter 7]{GerritzenvanderPut:SchottkyMumford}.
  Mumford proved the following theorem: 

\begin{thm}[{Mumford \cite[Theorem 3.3, Theorem 4.20]{Mum}}] \label{Mumford}
A geometrically connected smooth projective  curve $X$ of genus $\geq2$ over $K$ is a Mumford curve if and only if it has split degenerate reduction, i.e., there exists a proper flat scheme $Y$ over $\Spec K^\circ $ such that
\begin{itemize}
\item $Y \times_{\Spec K^\circ} \Spec K \cong X$,
\item the normalizations of all the irreducible components of $Y \times_{\Spec K^\circ} \Spec \overline{k}$ are rational curves (where $\overline{k}$ is an algebraic closure of $k$), and 
\item all the singular points of the closed fiber $Y \times_{\Spec K^\circ} \Spec k$ are $k$-rational ordinary double points with   two $k$-rational branches.
\end{itemize} \end{thm}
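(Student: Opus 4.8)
The plan is to establish the two implications separately, treating the passage from a Schottky uniformization to a degenerate model as the easier direction and the reverse reconstruction of a Schottky group from a degenerate model as the main body of the argument.

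For the forward implication, suppose $X \cong \Omega/\Gamma$ with $\Gamma \subset \PGL_2(K)$ a Schottky group and $\Omega = \P^1 \setminus \mathcal{L}$, where $\mathcal{L}$ is the set of limit points. I would exploit the canonical action of $\Gamma$ on the Bruhat--Tits tree $\T$ of $\PGL_2(K)$. To each vertex $v$ of $\T$ one attaches a copy of $\P^1_k$ (the canonical reduction of the affinoid disc in $\P^1$ associated to $v$), and to each edge an ordinary double point where two such $\P^1_k$'s cross; gluing these along $\T$ produces a formal model of $\Omega$ whose special fibre is an infinite tree of rational curves. Since $\Gamma$ is discontinuous and torsion-free, it acts freely and properly on the relevant subtree with finite quotient graph, so descending the model yields a proper flat $K^\circ$-scheme $Y$ with $Y_K \cong X$ whose special fibre is a finite union of $\P^1_k$'s meeting in ordinary double points. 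Because every gluing map and every crossing point arises from elements of $\PGL_2(K)$ and from $K$-rational points, the nodes and their branches are automatically $k$-rational; this is exactly split degenerate reduction, so $X$ satisfies the criterion.

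For the reverse implication I would start from a split degenerate, hence stable, model $Y$ and reconstruct the uniformization. Let $G$ be the dual graph of the special fibre $Y_k$; since all components are rational and all singularities are ordinary double points, the first Betti number of $G$ equals the genus $g \geq 2$, and $\pi_1(G)$ is free of rank $g$. I would pass to the formal completion $\widehat{Y}$ along $Y_k$ and build its universal cover as a formal scheme over the universal covering tree $\widetilde{G}$ of $G$: an infinite ``tree of $\P^1$'s'' with $k$-rational nodes. The key step is to realize the rigid generic fibre of this infinite formal scheme as an open subdomain $\Omega \subset \P^1$, using that each component is $\P^1$ with $k$-rational crossing points to embed the successive discs compatibly inside $\P^1$; the deck group $\Gamma \cong \pi_1(G)$ then acts on $\Omega$ by Möbius transformations, and one checks that $\Gamma$ is discontinuous, torsion-free, and has infinitely many limit points, i.e.\ is a Schottky group, with $\Omega/\Gamma \cong X^{\mathrm{an}}$. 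The ``GAGA''-correspondence then identifies this quotient with $X$ itself.

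The hard part will be the reverse construction: producing the actual Schottky group inside $\PGL_2(K)$ and proving that the tree of rational curves embeds as a genuine rigid subdomain of $\P^1$ rather than merely as an abstract formal scheme. This requires a convergence argument showing that the chosen generators of $\pi_1(G)$ lift to hyperbolic Möbius transformations whose attracting and repelling fixed points accumulate to a perfect, nowhere dense limit set, so that $\Omega$ is precisely the complement of this set and $\Gamma$ acts freely with finite quotient graph. This analytic construction --- essentially the content of \cite[Theorem 4.20]{Mum} --- is where the bulk of the work lies; the forward direction and the final GAGA identification are comparatively formal.
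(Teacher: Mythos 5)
The paper itself does not prove this statement: it is Mumford's theorem, quoted verbatim with a citation to \cite[Theorem 3.3, Theorem 4.20]{Mum}, so there is no in-paper argument to compare yours against. Judged on its own terms, your sketch does reproduce the architecture of Mumford's original proof --- forward direction by descending a $\Gamma$-equivariant formal model of $\Omega$ built along the Bruhat--Tits tree, reverse direction by passing to the universal cover of the dual graph of the special fibre --- but the two places where all the substance lies are asserted rather than argued, and one of them is pointed in a slightly wrong direction.

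First, in the forward direction, the quotient of your formal model by $\Gamma$ is a priori only a proper admissible formal scheme over $\Spf K^\circ$; the statement demands a proper flat \emph{scheme} $Y$, so you must algebraize the quotient, and this step is skipped entirely. Second, and more seriously, in the reverse direction the crux is not a ``convergence argument'' showing that lifted generators are hyperbolic with fixed points accumulating to a perfect nowhere dense set. The actual mechanism in Mumford's proof is a classification result: a flat formal scheme whose special fibre is an infinite tree of $\P^1_k$'s crossing at \emph{split} ordinary double points ($k$-rational nodes with two $k$-rational branches) is determined by a tree of $K^\circ$-lattices in $K \oplus K$, i.e.\ by a subtree of the Bruhat--Tits tree $\T$. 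It is this lattice description --- and precisely here the splitness hypothesis is indispensable, since a non-split node would only give the data over a quadratic extension and one would obtain merely a twist of a Mumford curve --- that simultaneously embeds the generic fibre of the universal cover into $\P^1$ and realizes the deck group $\pi_1(G)$ inside $\PGL_2(K)$. Discontinuity, torsion-freeness, and the structure of the limit set then follow from the free action of $\pi_1(G)$ on the tree, not from an accumulation argument about fixed points. Without the lattice-tree idea your plan has no way to produce M\"obius transformations at all: an automorphism of an abstract formal tree of $\P^1$'s carries no $\PGL_2(K)$-structure. You honestly flag this step as where ``the bulk of the work lies,'' but flagging it is not supplying it; as written, the proposal is a correct outline of the shape of Mumford's theorem rather than a proof of it.
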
 

We collect some properties of the Bruhat-Tits tree $\T$ of $\PGL_2(K)$ used in Section \ref{only if} of this paper;  see \cite[Section 2]{CorKatoKont01}, \cite[Chapter II]{Tree} for details.
The Bruhat-Tits tree $\T$ is a combinatorial graph defined as follows:
\begin{itemize}
\item The set of vertices $\vert (\T )$ is the set of equivalence classes of $K^\circ$-lattices in $K \oplus K$. Here, two $K^\circ$-lattices $M_1, M_2$ are equivalent if 
$M_1= a M_2$ for some $ a \in K^\times$.
\item Two vertices $w_1, w_2 \in \vert (\T)$ are adjacent if and only if $\pi M_1 \varsubsetneq M_2 \varsubsetneq M_1$ for some  $K^\circ$-lattices $M_1$ and $ M_2$ in the equivalence classes $ w_1$ and $ w_2 $, respectively.
\end{itemize}

The graph $\T$ is actually a tree \cite[Chapter II, Theorem 1]{Tree}. The set of edges of  $\T$ is denoted by $\edge (\T)$.
A sequence  $w_1, w_2,w_3 \dots $ of distinct vertices of $\T$ gives a \textit{half-line} on $\T$ if $w_i $, $w_{i+1}$ are adjacent for any $i \geq1$.
Two half-lines given by  $w_1, w_2,w_3 \dots $ and $w_1', w_2',w_3' \dots $ are equivalent if there exist $i, j \geq 1$ such that  $w_{i+r}=w_{j+r}'$ for any $r\geq 0$.
An equivalence class of half-lines on $\T$ is called an \textit{end} of $\T$.
 There is a natural bijection between $\P^1(K)$ and  the set of ends of $\T$ as follows.
 For an element $a \in \P^1(K)$, let $V_a \subset K \oplus K$ be a $1$-dimensional $K$-subspace corresponding to $a$. 
 Let $w_i \in  \vert (\T)$ be the equivalence class of $K^\circ$-lattices containing $$ \pi^i (K^\circ \oplus K^\circ) + V_a \cap (K^\circ \oplus K^\circ).$$
Then the sequence $w_1, w_2,w_3 \dots $ gives the end of $\T$ corresponding to $a$. 
This bijection is equivariant  with respect to the action of $\PGL_2(K)$.
 See \cite[Chapter II, p.\,72]{Tree} for details.
 
For $ v,w \in \vert (\T )$ and $ a, b \in \P^1(K)$, let $ \intervalcc{v}{w}$ (resp.\ $ \intervalco{v}{a}  , \   \intervaloo{a}{b}  ) $ be the path from $v$ to $w$ without backtracking (resp.\ the half-line from $v$ to $a$, the line from $a$ to $b$), where we regard  $a,b$ as ends of $\T$.
For $ v,w \in \vert (\T )$, the length of the path $ \intervalcc{v}{w}$ is called the \textit{distance} from $v$ to $w$, 
and is denoted by $\dist (v,w)$; see \cite[Section 1.2]{Tree}.
For subtrees $\mathscr{R}, \mathscr{S} \subset \T$, we put 
$$\dist (\mathscr{R},\mathscr{S}) := \min_{ \substack {v \in \vert (\mathscr{R}) \\  w \in \vert (\mathscr{S})}} \dist(v,w).$$

We denote by  $v_1\in \vert (\T )$ the vertex  corresponding to the equivalence class of $K^\circ$-lattices containing $ K^{\circ} e_1 \oplus K^{\circ} e_2 $,  where $\lbrace e_1, e_2 \rbrace$ is the standard basis of $K \oplus K$.
   For  $a \in K^\times $ (resp.\ $w \in \vert (\T )$),   the intersection of  
   $  \intervaloo{0}{\infty} $, $   \intervaloo{0}{a} $, and $ \intervaloo{a}{\infty} $ (resp.\ $ \intervaloo{0}{\infty} $, $ \intervalco{w}{0} $, and $  \intervalco{w}{\infty} $) consists of one vertex only, and we denote it  by $v(0,\infty, a)$ (resp.\ $v(0,\infty ; w)$).
 
    \begin{itemize}
    \item If $\val_K (a) \geq 0 $, we have $v(0,\infty , a) \in  \intervalco{v_1}{0}$ and $ \dist(v_1, v(0,\infty , a) ) =  \val_K (a).$
    \item If $\val_K (a) \leq 0 $, we have $v(0,\infty , a) \in  \intervalco{v_1}{\infty}$ and $ \dist(v_1, v(0,\infty , a) ) = - \val_K (a).$
    \end{itemize}
   Since $ v(0,\infty , a)  = v(0,\infty ; w) $ for  any $ w \in \intervaloo{0}{a} \cap \intervaloo{a}{\infty} $, we can compute $ \val_K (a) $ by using $v(0,\infty ; w )$.
   
   For any discrete subgroup $N \subset \PGL_2(K)$ and any $v \in \vert (\T)$,  
   the stabilizer 
   $$N_v := \lbrace \, \gamma \in N \mid \gamma(v)=v \, \rbrace$$
     is a finite group.
   For an element  $\gamma \in \PGL_2(K) $ of finite order, let $M(\gamma ) \subset \T$ be the smallest subtree  generated by the vertices fixed by  $\gamma $. The subtree $M(\gamma)$ is called the \textit{mirror} of $\gamma$; see \cite[Section 2]{CorKatoKont01}. 
  An element $\gamma \in \PGL_2(K) $ of order $p$ is called a \textit{parabolic element}.
  A parabolic element $\gamma \in \PGL_2(K)$ has a unique fixed point in $\P^1$.
  For a parabolic element $\gamma \in \PGL_2(K)$ and  $v \in \vert (M(\gamma))$,  the subset
 $$ \lbrace\,  e \in  \edge (M(\gamma)) \mid v \text{ is an extremity of } e \, \rbrace $$
 consists of  one element only or coincides with $$\lbrace \, e \in \edge (\T) \mid v \text{ is an extremity of } e\,  \rbrace .$$
     For any parabolic element $\gamma \in \PGL_2(K)$ and any $v \in \vert (M(\gamma))$, the  element $\gamma$ acts freely on the following set:
    $$ \lbrace \, e \in \edge (\T) \setminus \edge (M(\gamma)) \mid v \text{ is an extremity of } e \, \rbrace .$$

     \begin{center}
   \includegraphics[width=7cm]{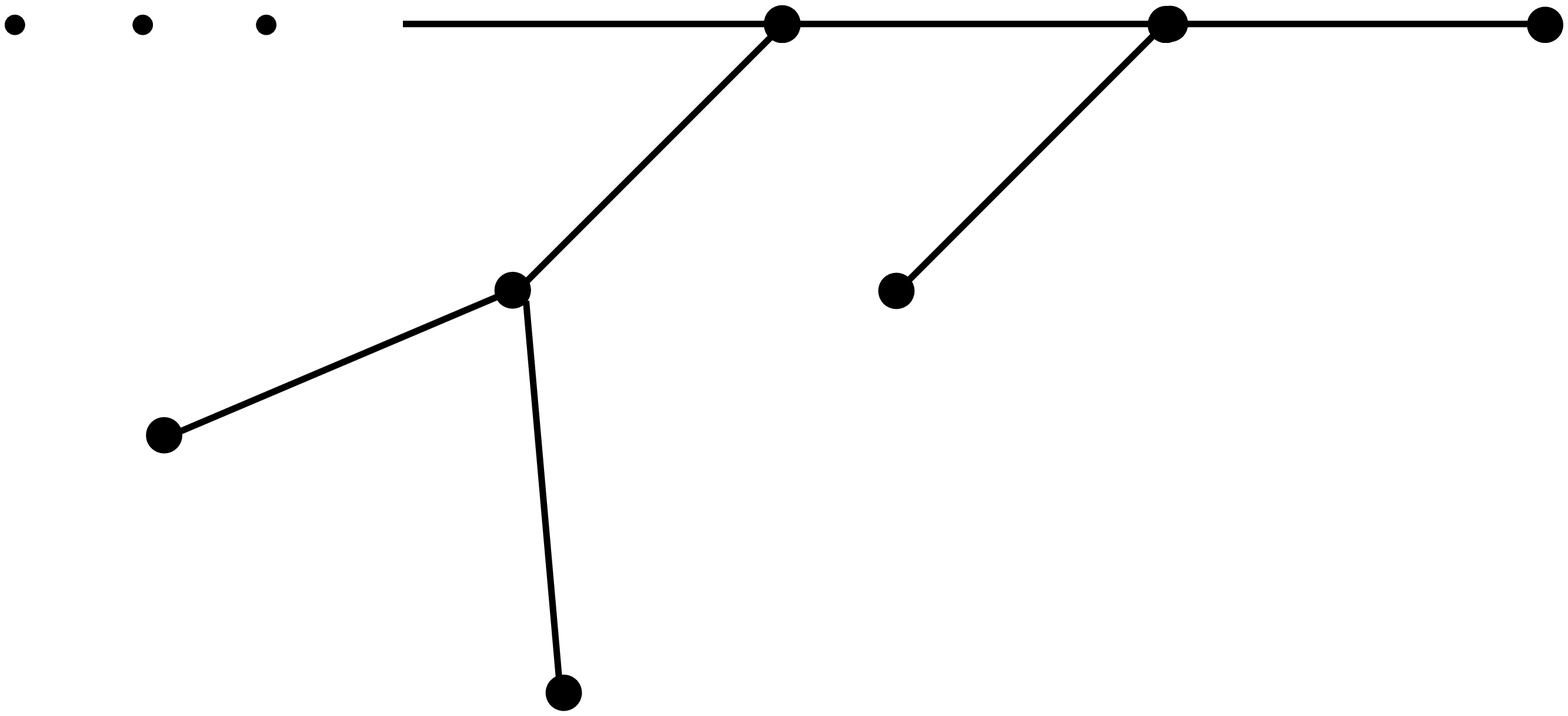}
   
   \textsc{Figure}.\  $M(\gamma)$ for a parabolic element $\gamma \in \PGL_2(K)$ when $k \cong \F_2$
     \end{center}
  

%
\section{Some facts about cyclic coverings of degree $p$ of the projective line by Mumford curves}\label{facts about cyclic coverings}
In this section, we review some facts about cyclic coverings of degree $p$ of $\P^1$ proved by van Steen \cite{Ste82}.
 Let $$\varphi \colon X \rightarrow \mathbb{P}^1$$ be a cyclic covering of degree $p$ over $K$. 
Let $ a_1,a_2, \dots , a_r \in  \P^1 $ be the branch points  of $\varphi$. 
We assume that $a_i \neq \infty $ for every $i$.
By replacing $K$ by its finite extension, we may assume that $a_1,a_2, \dots , a_r$ are $K$-rational points on $\P^1$.

We denote  the function field of $\P^1$ (resp.\ $X$) by $K(x)$ (resp.\ $F$). 
Since $F/K(x)$ is an Artin-Schreier extension,  
by replacing $K$ by its finite extension, 
there exists $ y \in F$ such that  $F = K(x,y)$ and 
$$y^p-y=\sum_{i=1}^{r} \sum_{ \substack{j=1 \\  j \not\equiv 0 \Mod p}}^{n_i} \frac{\lambda_{ij}}{(x - a_i)^j}$$
for some $\lambda_{ij} \in K^{\times}$.
Using this equation, we embed  $X$ into $\P^1 \times \P^1$.
If $X$ is a Mumford curve, we have $n_i=1$ for every $i$ \cite[Proposition 3.1]{Ste82}.  
We assume that $n_i =1$ for every $i$ and put $ \lambda_i := \lambda_{i1}$. 
The cyclic covering $X$ has genus $(p-1)(r-1)$ \cite[Proposition 1.3]{Ste82}.
Thus, we also assume $(p-1)(r-1) \geq 2$, i.e., $r\geq 3$, or $r =2$ and $p\geq 3$.
 Hence $X$ is defined by 
 $$y^p-y=\sum_{i=1}^{r}  \frac{\lambda_i}{x - a_i} .$$
 
If $X$ is a Mumford curve, 
 there exist $s_1, s_2 , \dots, s_r \in \PGL_2(K)$ satisfying the following conditions \cite[Proposition 2.2, Section 3]{Ste82}:
\begin{itemize}
\item  $s_i \ (1\leq i \leq r)$ is an element of order $p$,
\item the subgroup $N \subset \PGL_2(K)$ generated by $s_i \ (1\leq i \leq r)$  is discontinuous  and isomorphic to 
 the free product of $\langle s_i \rangle  \ ( 1\leq i \leq r)$, (this implies the subgroup $ \Gamma  \subset \PGL_2(K)$   generated by $s_i^n s_{i+1}^{-n}  $ $(1 \leq i \leq  r-1 , \ 1 \leq n \leq p-1)$  is a Schottky group satisfying   $N \subset N_{\PGL_2(K)}(\Gamma) $,)
  \item $X \cong \Omega / \Gamma,$  $\P^1 \cong \Omega / N,$ and the covering $\varphi \colon X \to \P^1 $ coincides with the natural projection $\Omega / \Gamma \to \Omega / N ,$
  where
  $$\Omega := \P^1\setminus \lbrace\, \text{the limit points of } \Gamma \, \rbrace ,$$
  \item the fixed point $P_i \in \P^1$ of $s_i$ is an element of $\Omega$,
 \item the image of  $P_i  $  under the natural projection $\Omega \to \Omega / N \cong \P^1$  is the branch point $a_i \in \P^1 $,
 \item $s_i(y)=y+1 \ (1\leq i \leq r)$, where we consider $s_i$ as an element of $\Aut ( X) \cong N_{\PGL_2(K)}(\Gamma) / \Gamma$.
 \end{itemize}

    In particular,  we have
  $$N/ \Gamma \cong \Gal(F/ K(x)) \cong \Z /p\Z .$$
We note that $M(s_i) \cap M(s_j) = \emptyset  $ for any $ i \neq j $.
In fact,  if there exists a vertex $v \in \vert (M(s_i) \cap M(s_j))$, it is fixed by infinitely many elements  $s_{l_1}^{n_1} \dots s_{l_m}^{n_m}$
 for $ m \geq 0, \ l_k \in \{i,j \} $, and $ 1\leq n_k \leq p-1 \ ( 1\leq k \leq m)$ with $l_k \neq l_{k+1} \ (1\leq k \leq m-1)$, but, since $N$ is discrete,  the stabilizer $N_v$ is a finite group.
 The contradiction shows $M(s_i) \cap M(s_j) = \emptyset  $.

\section{Proof of Theorem \ref{main} (part 1)}\label{if}
In this section,  we shall show that if the inequality
\begin{equation}\label{assumption}
\lvert \lambda_i \lambda_j \rvert < \lvert a_i-a_j \rvert^2
\end{equation}
 is satisfied for any $i \neq j $, 
 then $X$ is a Mumford curve over a finite extension of $K$.
 
By replacing $K$ by its finite extension,  for each $i$,  there exists  $\varepsilon_i  \in \lvert K^\times \rvert $ satisfying 
$$\varepsilon_i < \lvert a_i - a_j \rvert \ \text{and}  \ \lvert \lambda_i \rvert<  \frac{\lvert a_i - a_j \rvert^2}{\lvert \lambda_j \rvert}    - \varepsilon_i  $$
for any $j \neq i$.
By replacing $K$ by its finite extension, for $i,j,$ and $k$ satisfying $\lvert a_i -a_j \rvert < \lvert a_i -a_k \rvert$, 
there exists $\zeta_{i,j,k} \in \lvert K^\times \rvert$ satisfying $$\lvert a_i -a_j \rvert < \zeta_{i,j,k} < \lvert a_i -a_k \rvert.$$
 For each $1 \leq  i \leq r$, let 
 $  \alpha_{i,1} \leq \alpha_{i,2} \leq   \dots  \leq \alpha_{i,M_i -1}  $
 be  the following  elements  
 $$ \varepsilon_i,\   \lvert \lambda_i \rvert,\  \lvert a_i - a_j \rvert, \ 
 \frac{\lvert a_i - a_j \rvert^2}{\lvert \lambda_j \rvert}, \ 
    \frac{\lvert a_i - a_j \rvert^2}{\lvert \lambda_j \rvert}  - \varepsilon_i \ (j \neq i) , \
 \zeta_{i,j,k} \  (j,k  \ \text{satisfying} \     \lvert a_i -a_j \rvert < \lvert a_i -a_k \rvert )    $$
   of $\lvert K^\times \rvert $ arranged in ascending order.
    We put  $  \alpha_{i,0}=0$ and $ \alpha_{i,M_i} = \infty  $ for each $1 \leq i \leq r$.

            We define sets $I$ and $J$ by 
            \begin{align*}
      I & := \{ \, n= ( n_1,  \dots, n_r ) \in \Z^r     \mid   0 \leq n_i \leq M_i -1 \  (1 \leq i \leq r) \, \}, \\
      J &:= \{ \, n= ( n_1,  \dots, n_r ) \in I    \mid   \lvert a_i -a_j \rvert \neq \alpha_{i,n_i+1} \ \text{or}  \ \lvert a_i -a_j \rvert \neq\alpha_{j,n_j+1}\  \text{for any} \  i \neq j  \, \}.
\end{align*}


  For each $n = ( n_1,  \dots, n_r ) \in I$,  we define an affinoid open subvariety $U_n \subset \P^1$ by
  \begin{align}
  U_n:= 
  \lbrace \, x \in \P^1  \mid  \alpha_{i, n_i} \leq \lvert x-a_i \rvert \leq \alpha_{i,n_{i+1}}  \text{ for any }1 \leq i \leq r  \, \rbrace. \notag 
  \end{align}

  \begin{lem}\label{cover}
  $\{ U_n \}_{n \in J}$ is an affinoid covering of $\P^1$.
  \end{lem}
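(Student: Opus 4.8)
The plan is to prove the lemma by showing that every point $x \in \P^1$ lies in some $U_n$ with $n \in J$. Each $U_n$ is cut out of $\P^1$ by the finitely many conditions $\alpha_{i,n_i} \leq \lvert x - a_i \rvert \leq \alpha_{i,n_i+1}$ with bounds in $\lvert K^\times \rvert$, so it is a rational (in particular affinoid) subdomain; since there are only finitely many $n \in J$ and $\P^1$ is quasi-compact, a set-theoretic covering by such subdomains is automatically admissible. Thus the entire content is the set-theoretic statement, and the routine part is recognizing the $U_n$ as affinoid subdomains.

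First I would fix $x \in \P^1$ and set $d_i := \lvert x - a_i \rvert$ for $1 \leq i \leq r$ (with $d_i = \infty$ when $x = \infty$). For each $i$ we have $\alpha_{i,0} = 0 \leq d_i \leq \infty = \alpha_{i,M_i}$, so there is at least one index $n_i$ with $\alpha_{i,n_i} \leq d_i \leq \alpha_{i,n_i+1}$; this $n_i$ is forced unless $d_i$ coincides with one of the values $\alpha_{i,k}$, in which case there is a genuine choice. Exploiting this boundary freedom is the crux of the argument.

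The key reduction is to an ultrametric estimate applied pairwise. From $a_i - a_j = (x - a_j) - (x - a_i)$ one gets $\lvert a_i - a_j \rvert \leq \max(d_i, d_j)$, so for every pair $i \neq j$ at least one of $d_i, d_j$ is $\geq \lvert a_i - a_j \rvert$. I would then choose $n$ by the following rule, which depends only on the single number $d_i$ and is therefore globally consistent: if $d_i = \lvert a_i - a_j \rvert$ for some $j \neq i$ (so that $d_i = \alpha_{i,k}$ is one of the ``distance'' special values), take $n_i$ to be the largest $k$ with $\alpha_{i,k} = d_i$, which guarantees $\alpha_{i,n_i+1} > d_i$; otherwise take any admissible $n_i$. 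In either case $\alpha_{i,n_i} \leq d_i \leq \alpha_{i,n_i+1}$, so $x \in U_n$.

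It remains to verify $n \in J$, i.e.\ that for no pair $i \neq j$ are both $\alpha_{i,n_i+1} = \lvert a_i - a_j \rvert$ and $\alpha_{j,n_j+1} = \lvert a_i - a_j \rvert$; since the defining condition of $J$ is a disjunction, it suffices to break one equality. Fixing a pair and relabeling so that $d_i \geq d_j$, we have $d_i \geq \lvert a_i - a_j \rvert$. If $d_i > \lvert a_i - a_j \rvert$, then $\alpha_{i,n_i+1} \geq d_i > \lvert a_i - a_j \rvert$, so the first equality fails. If $d_i = \lvert a_i - a_j \rvert$, then $d_i$ is a distance value and the rule forces $\alpha_{i,n_i+1} > \alpha_{i,k} = d_i = \lvert a_i - a_j \rvert$, so again it fails. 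Hence $n \in J$. I expect the main obstacle to be precisely this bookkeeping that makes the local choices of the $n_i$ globally consistent, compounded by the degenerate situation where several of the values $\varepsilon_i, \lvert \lambda_i \rvert, \lvert a_i - a_j \rvert, \dots$ coincide so that some $\alpha_{i,k} = \alpha_{i,k+1}$; selecting the \emph{largest} $k$ with $\alpha_{i,k} = d_i$ is exactly what neutralizes these repeats.
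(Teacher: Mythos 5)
Your proof is correct, and it takes a genuinely different route from the paper's. The paper argues by successive modification: starting from the full covering $\{U_n\}_{n\in I}$, it fixes $c\in U_n$ with $n\in I\setminus J$, increments one coordinate $n_i$ at a time, and runs an induction on the number $M_n$ of indices $i$ for which $\alpha_{i,n_i+1}$ equals some $\lvert a_i-a_j\rvert$; the auxiliary values $\zeta_{i,j,k}$ (inserted into the lists $\alpha_{i,\bullet}$ strictly between consecutive distances $\lvert a_i-a_j\rvert<\lvert a_i-a_k\rvert$) are what guarantee that $M_n$ strictly decreases at each step. You instead construct the index $n\in J$ in one shot from the point $x$ itself, via the rule ``if $d_i=\lvert x-a_i\rvert$ equals some $\lvert a_i-a_j\rvert$, take $n_i$ to be the largest $k$ with $\alpha_{i,k}=d_i$,'' and then verify membership in $J$ pairwise using only the ultrametric inequality $\lvert a_i-a_j\rvert\le\max(d_i,d_j)$: for each pair, the index with the larger $d$ has its equality $\alpha_{\bullet,n_\bullet+1}=\lvert a_i-a_j\rvert$ broken, either trivially (when $d>\lvert a_i-a_j\rvert$) or by the rule (when $d=\lvert a_i-a_j\rvert$). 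This is shorter, avoids the induction entirely, and in particular never invokes the $\zeta_{i,j,k}$; the paper's version is more mechanical but makes transparent why those auxiliary radii were built into the subdivision in the first place. Your handling of the degenerate repeats $\alpha_{i,k}=\alpha_{i,k+1}$ (taking the \emph{largest} $k$ realizing $d_i$) is exactly the point that makes the direct construction go through.
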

  \begin{proof}
   Since $\{ U_n \}_{n \in I}$ is an affinoid covering of $\P^1$,
    it  suffices to show that, for any $ n \in I \setminus J $ and any $ c \in U_n$,   
    there exists $ n' \in J$ satisfying $ c \in U_{n'}$.
  
  For $n \in I$, we put $$M_n := \sharp \{\, 1 \leq i \leq r  \mid \lvert a_i -a_j \rvert = \alpha_{i,n_i+1} \ \text{for some} \ j \neq i \, \}.$$ 
    We prove Lemma \ref{cover} by induction on $M_n.$
    
        We fix $ n \in I \setminus J $ and $ c \in U_n$.
    Since $n \in  I \setminus J $, there exist distinct elements $i,j $ satisfying  
$\lvert a_i -a_j \rvert = \alpha_{i,n_i+1}$ and $    \lvert a_i -a_j \rvert = \alpha_{j,n_j+1}$.
In particular, we have $M_n \geq 2$.
We have $$U_n \subset \{ \, x \in \P^1 \mid \lvert x -a_i \rvert \leq \lvert a_i -a_j \rvert \ \text{and} \ \lvert x -a_j \rvert \leq \lvert a_i -a_j \rvert \, \}.$$
Hence we have $\lvert c -a_i \rvert =\lvert a_i -a_j \rvert$ or $\lvert c -a_j \rvert = \lvert a_i -a_j \rvert$.

We may assume $\lvert c -a_i \rvert =\lvert a_i -a_j \rvert$.
 We put $n_k':= n_k$ for $k \neq i$ and $n_i':=n_i+l'$, where  we put $$l':= \min \{ \,  l \geq 1   \mid   \alpha_{i,n_i+1} <\alpha_{i,n_i+ l +1} \, \}.$$
  We put $n' := (n_1', \dots, n_r') \in I$.
 Then we have $c \in U_{n'}$.
 We have $$\alpha_{i,n_i+ l'+1} \leq \zeta_{i,j,k}  < \lvert a_i -a_k \rvert  $$ for  $k$ satisfying  $\lvert a_i -a_j \rvert < \lvert a_i -a_k \rvert $.
 Hence we have $\alpha_{i,n_i+ l'+1} \neq \lvert a_i -a_k \rvert $ for  $k \neq i$. 
 We have $M_{n'} <M_n$.
 By induction on $M_n$, there exists $n' \in J$ satisfying $c \in U_{n'}$.
  \end{proof}

   We put  $J':=  \lbrace  \, n \in J \mid U_n \neq \emptyset  \, \rbrace$. For each $n \in J'$, we put  $\beta_n := \min_{1\leq i \leq r } \alpha_{i, n_i +1} $. 
  
  For any $ n \in J'$, we have 
  $$ U_n = \{ \, x \in \P^1 \mid \lvert x - a_{l(n,0)} \rvert \leq \beta_n \, \} \setminus \bigcup_{\nu=0}^{N_n} D_{n,\nu},$$
  where
  \begin{align*}
 D_{n,\nu} & :=  \{ \, x \in \P^1 \mid \lvert x - a_{l(n,\nu)} \rvert <  \alpha_{l(n,\nu), n_{l(n,\nu)}} \, \} 
  \end{align*}
   for  some $N_n \geq 0$ and $1\leq l(n,\nu) \leq r \ (0\leq \nu \leq N_n)$ with $ \lvert a_{l(n,0)} - a_{l(n,\nu)} \rvert \leq \beta_n \ (1\leq \nu \leq N_n)$.
   We may assume  $D_{n,\nu} \cap D_{n,\nu'} = \emptyset $ for $\nu \neq \nu'$.
   We take $l(n,0)$ so that     $ \alpha_{l(n,0), n_{l(n,0)}} = \min_{0\leq \nu \leq N_n} \alpha_{l(n,\nu), n_{l(n,\nu)}}$.
 
   \begin{lem}\label{un}
  For each $n \in J'$, we have $\alpha_{l(n,\nu),n_{l(n,\nu)}} = \lvert a_{l(n,0)} - a_{l(n,\nu)} \rvert  $ for $1 \leq \nu \leq N_n$.
 Moreover, for $1 \leq \nu \leq N_n$, we have  
      $ \lvert a_{l(n,0)} - a_{l(n,\nu)} \rvert =  \alpha_{l(n,0), n_{l(n,0)}}$ or $ \lvert a_{l(n,0)} - a_{l(n,\nu)} \rvert = \beta_n $.
  \end{lem}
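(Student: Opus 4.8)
The plan is to translate the statement into the ultrametric geometry of the disk $\{x : \lvert x - a_{l(n,0)}\rvert \le \beta_n\}$ together with its holes $D_{n,\nu}$, and to read off both assertions from the way the radii $\alpha_{l(n,\nu),n_{l(n,\nu)}}$ sit inside the sorted lists $\alpha_{i,\bullet}$. Throughout I abbreviate $i_0 := l(n,0)$, $j_\nu := l(n,\nu)$, $\rho_\nu := \alpha_{l(n,\nu),n_{l(n,\nu)}}$ (so $\rho_0 = \min_\nu \rho_\nu$ by the choice of $l(n,0)$), and $d_\nu := \lvert a_{i_0} - a_{j_\nu}\rvert$. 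Since the $D_{n,\nu}$ are nonempty and pairwise disjoint, their centers $a_{j_\nu}$ are distinct, so $d_\nu > 0$ for $\nu \ge 1$; and $d_\nu \le \beta_n$ is part of the given description of $U_n$.

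First I would record two elementary consequences of $n \in J'$ (so $U_n \ne \emptyset$) and the ultrametric inequality, using only that two disks meet if and only if one contains the other. (i) Every hole radius satisfies $\rho_\nu \le \beta_n$: if $\rho_\nu > \beta_n$ then, since $d_\nu \le \beta_n < \rho_\nu$, every point of the big disk lies in $D_{n,\nu}$, forcing $U_n = \emptyset$. (ii) $\rho_0 \le d_\nu$: this is clear when $\rho_0 = 0$, and when $\rho_0 > 0$ and $d_\nu < \rho_0$ one gets $D_{n,0} \subseteq D_{n,\nu}$, contradicting $D_{n,0} \cap D_{n,\nu} = \emptyset$.

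Next I would prove the ``moreover'' assertion, as it is the cleaner half and feeds the first. By (ii) and $d_\nu \le \beta_n$ we have $\alpha_{i_0,n_{i_0}} = \rho_0 \le d_\nu \le \beta_n \le \alpha_{i_0,n_{i_0}+1}$, the last inequality because $\beta_n = \min_i \alpha_{i,n_i+1}$. But $d_\nu = \lvert a_{i_0}-a_{j_\nu}\rvert$ is itself one of the entries of the sorted list defining $\alpha_{i_0,\bullet}$, and no value lies strictly between the consecutive entries $\alpha_{i_0,n_{i_0}}$ and $\alpha_{i_0,n_{i_0}+1}$; hence $d_\nu$ equals one of them, which gives $d_\nu = \rho_0$ or $d_\nu = \beta_n$ (in the latter case also $\beta_n = \alpha_{i_0,n_{i_0}+1}$).

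Finally I would deduce $\rho_\nu = d_\nu$ by treating these two alternatives. If $d_\nu = \rho_0$ then $\rho_0 = d_\nu > 0$, and were $\rho_\nu > \rho_0$ we would again obtain $D_{n,0} \subseteq D_{n,\nu}$, contradicting disjointness; hence $\rho_\nu \le \rho_0 \le \rho_\nu$ and $\rho_\nu = \rho_0 = d_\nu$. The remaining case $d_\nu = \beta_n$ is where the hypothesis $n \in J$ is indispensable, and I expect it to be the main obstacle. Here $\beta_n = \alpha_{i_0,n_{i_0}+1}$, while $\beta_n = d_\nu = \lvert a_{i_0}-a_{j_\nu}\rvert$ is an entry of the list for $j_\nu$ lying $\le \alpha_{j_\nu,n_{j_\nu}+1}$. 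If it equalled $\alpha_{j_\nu,n_{j_\nu}+1}$ we would have $\lvert a_{i_0}-a_{j_\nu}\rvert = \alpha_{i_0,n_{i_0}+1} = \alpha_{j_\nu,n_{j_\nu}+1}$, exactly the configuration excluded by the definition of $J$, contradicting $n \in J' \subseteq J$. Hence $\beta_n < \alpha_{j_\nu,n_{j_\nu}+1}$, which forces $\beta_n \le \alpha_{j_\nu,n_{j_\nu}} = \rho_\nu$; combined with $\rho_\nu \le \beta_n$ from (i) this yields $\rho_\nu = \beta_n = d_\nu$, completing the argument.
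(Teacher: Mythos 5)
Your proof is correct and follows essentially the same route as the paper's: both arguments sandwich $\lvert a_{l(n,0)}-a_{l(n,\nu)}\rvert$ between consecutive entries of the sorted lists $\alpha_{l(n,0),\bullet}$ and $\alpha_{l(n,\nu),\bullet}$ using disjointness of the holes, the minimality of $\alpha_{l(n,0),n_{l(n,0)}}$, and the definition of $J$ to exclude the double coincidence with the upper entries. The only difference is organizational (you prove the ``moreover'' first and then split into the two cases $d_\nu=\rho_0$ and $d_\nu=\beta_n$), which does not change the substance.
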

  \begin{proof}
   We fix $1 \leq \nu \leq N_n$. 
 Since $  a_{l(n,\nu)} \notin D_{n,0} $, we have $\alpha_{l(n,0), n_{l(n,0)}} \leq \lvert a_{l(n,0)} - a_{l(n,\nu)} \rvert$.
 We  have $$\lvert a_{l(n,0)} - a_{l(n,\nu)} \rvert \leq \beta_n \leq  \alpha_{l(n,0), n_{l(n,0)}+1} .$$
 
 Hence we have $\lvert a_{l(n,0)} - a_{l(n,\nu)} \rvert = \alpha_{l(n,0), n_{l(n,0)}}$ or $\lvert a_{l(n,0)} - a_{l(n,\nu)} \rvert = \alpha_{l(n,0), n_{l(n,0)}+1}$.
    Similarly, we have   $\lvert a_{l(n,0)} - a_{l(n,\nu)} \rvert = \alpha_{l(n,\nu), n_{l(n,\nu)}}$ or $\lvert a_{l(n,0)} - a_{l(n,\nu)} \rvert = \alpha_{l(n,\nu), n_{l(n,\nu)}+1}$.
  
  We assume that $ \lvert a_{l(n,0)} - a_{l(n,\nu)} \rvert \neq \alpha_{l(n,\nu), n_{l(n,\nu)}}$.
  Then we have $\lvert a_{l(n,0)} - a_{l(n,\nu)} \rvert = \alpha_{l(n,\nu), n_{l(n,\nu)}+1}$ and $\alpha_{l(n,\nu), n_{l(n,\nu)}}<\alpha_{l(n,\nu), n_{l(n,\nu)}+1}$.
  Since $\alpha_{l(n,0), n_{l(n,0)}} \leq \alpha_{l(n,\nu), n_{l(n,\nu)}}$,
  we  have $\lvert a_{l(n,0)} - a_{l(n,\nu)} \rvert = \alpha_{l(n,0), n_{l(n,0)}+1}$,
  which contradicts the definition of $J$. Hence we have $ \lvert a_{l(n,0)} - a_{l(n,\nu)} \rvert = \alpha_{l(n,\nu), n_{l(n,\nu)}}$.
  
    If  $\lvert a_{l(n,0)} - a_{l(n,\nu)} \rvert = \alpha_{l(n,0), n_{l(n,0)}+1}$, since 
  $ \lvert a_{l(n,0)} - a_{l(n,\nu)} \rvert \leq \beta_n $, we have $ \alpha_{l(n,0), n_{l(n,0)}+1} \leq \beta_n $. Hence we have $ \alpha_{l(n,0), n_{l(n,0)}+1} = \beta_n $.
  Consequently, we have  
      $ \lvert a_{l(n,0)} - a_{l(n,\nu)} \rvert =  \alpha_{l(n,0), n_{l(n,0)}}$ or $ \lvert a_{l(n,0)} - a_{l(n,\nu)} \rvert = \beta_n $.
  \end{proof}
  
    One can easily show that the admissible affinoid covering  $ \lbrace U_n \rbrace_{n \in J' }$ of $ \P^1$ is a \textit{formal analytic covering} in the sense of \cite[Definition 3.1.6]{Lutk}; see \cite[Proposition 2.2.6]{Fresnel-vanderPut:2004}.
  We see that  $\O(U_n) $ is reduced and  $\lvert \O (U_n) \rvert_{\sp} = \lvert K \rvert$; see \cite[Proposition 2.2.6]{Fresnel-vanderPut:2004}.
   Then  $\O(U_n)^\circ$  is a $K^\circ$-model of $\O(U_n)$ by \cite[Theorem 1 of Section 6.4.3]{BGR}.  
   Hence the formal analytic covering $ \lbrace U_n \rbrace_{n \in J' }$ of $ \P^1$ defines a proper admissible formal scheme  covered by $ \lbrace \Spf (\O(U_n)^\circ) \rbrace_{n \in J' }$ by  \cite[Theorem 3.3.12]{Lutk}. Hence it is  algebraic by Grothendieck's existence theorem.
    Consequently, the canonical reductions  $ \lbrace \overline{U_n} \rbrace_{n \in J' }$ define an algebraic reduction of $\P^1$.
    The canonical reductions  $ \lbrace \overline{\varphi^{-1} (U_n) } \rbrace_{n \in J'}$   define an algebraic reduction of $X$ over a finite extension of $K$.
      
 In order to show that  $X$ is a Mumford curve over a finite extension of $K$, it is enough to prove that, for each $ n \in J'$, the affinoid open subvariety $\varphi^{-1} (U_n) \subset X$ satisfies the following conditions over a finite extension of $K$:
 \begin{con}\label{conditionaffinoid} \
  \begin{itemize}
 \item All the irreducible components of the canonical reduction $\overline{\varphi^{-1}(U_n)}$ are rational curves, and 
 \item all the singular points of the canonical reduction $\overline{\varphi^{-1}(U_n)}$ are ordinary double points.
 \end{itemize}
 \end{con}
 
 We shall show that $\varphi^{-1} (U_n)$ satisfies Condition \ref{conditionaffinoid} by calculating the canonical reductions $\overline{\varphi^{-1} (U_n)}$ explicitly.
 We fix an element $n \in J'$.
 We put $ N:= N_n $ and $l := l(n,0)$. 
 We also put $ D_\nu := D_{n,\nu},  \ d_\nu := a_{l(n,\nu)}$ for each $0 \leq \nu \leq N$.
 We take $b_1 \in K$ and $b_2 \in K^\times \cup \{\infty\}$ satisfying $\lvert b_1 \rvert = \alpha_{l,n_l}$ and $ \lvert b_2 \rvert = \beta_n$.
  Then we have 
  \begin{align*}
  U_n &= \{ \, x \in \P^1 \mid \lvert x - d_0 \rvert \leq  \lvert b_2 \rvert \, \} \setminus \bigcup_{\nu=0}^{N_n} D_{\nu}, \\
 D_{0} & =  \{\, x \in \P^1 \mid \lvert x - d_0 \rvert <      \lvert b_1 \rvert \, \}  ,\\
 D_{\nu} & =  \{ \, x \in \P^1 \mid \lvert x - d_\nu \rvert <  \lvert d_0 - d_\nu \rvert   \, \}  \qquad (1 \leq \nu \leq N_n).
  \end{align*}
 For each $1 \leq \nu \leq N_n$, we  have  $\lvert d_0 - d_\nu \rvert = \lvert b_1 \rvert$ or $\lvert d_0 - d_\nu \rvert  = \lvert b_2 \rvert$.
 
 We put 
 $ z_i:= \lambda_i(x-a_i)^{-1} $ for each $1\leq i \leq r$.
  From the equation \eqref{equation1}, we have 
$$y^p-y = \sum_{i=1}^{r} z_i .$$
 We put
 \begin{align*}
 \Lambda := \{ \, i \mid  \lvert x-a_i \rvert \leq \lvert \lambda_i  \rvert \text{ for some } x \in U_n  \, \}
  = \{ \, i \mid \lvert z_i \rvert_{\sp} \geq 1 \text{ on } U_n  \, \}.
\end{align*}

\begin{lem}\label{lambdaempty}
If $\Lambda = \emptyset $, the affinoid open subvariety $\varphi^{-1}(U_n)$ satisfies  Condition \ref{conditionaffinoid}.
\end{lem}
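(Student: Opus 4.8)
The plan is to show that, under the hypothesis $\Lambda=\emptyset$, the covering $\varphi^{-1}(U_n)\to U_n$ splits completely into $p$ disjoint copies of $U_n$. Condition \ref{conditionaffinoid} is then immediate: the reduction is a disjoint union of $p$ copies of the nonsingular rational curve $\overline{U_n}$, so all its irreducible components are rational and it has no singular points at all, which vacuously satisfies the ordinary-double-point requirement.

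Set $A:=\mathcal{O}(U_n)$ and $f:=\sum_{i=1}^{r}z_i\in A$, so that $\varphi^{-1}(U_n)=\Sp B$ with $B=A[y]/(y^{p}-y-f)$. By definition, $\Lambda=\emptyset$ means $\lvert z_i\rvert_{\sp}<1$ on $U_n$ for every $i$, whence $\lvert f\rvert_{\sp}<1$. Since $A$ is reduced with $\lvert A\rvert_{\sp}=\lvert K\rvert$, the spectral seminorm is a complete power-multiplicative norm taking values in $\lvert K\rvert$, so $f\in A^{\circ\circ}=\{\,g\mid \lvert g\rvert_{\sp}\leq\lvert\pi\rvert\,\}$, and this set is a complete metric space.

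The first key step is to solve $y^{p}-y=f$ inside $A^{\circ\circ}$. Here I would exploit that in characteristic $p$ the Frobenius $g\mapsto g^{p}$ is additive, so the self-map $T\colon A^{\circ\circ}\to A^{\circ\circ}$, $T(u):=u^{p}-f$, obeys $\lvert T(u)-T(u')\rvert_{\sp}=\lvert(u-u')^{p}\rvert_{\sp}=\lvert u-u'\rvert_{\sp}^{\,p}\leq\lvert\pi\rvert^{\,p-1}\lvert u-u'\rvert_{\sp}$ and is therefore a contraction of the complete space $A^{\circ\circ}$. Its unique fixed point $y_0$ satisfies $y_0^{p}-y_0=f$; then $y_0+c$ solves the same equation for every $c\in\mathbb{F}_p$, and $y^{p}-y-f=\prod_{c\in\mathbb{F}_p}(y-(y_0+c))$. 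As the roots differ by units of $K^{\circ}$, the Chinese Remainder Theorem yields $B\cong\prod_{c\in\mathbb{F}_p}A$, i.e.\ $\varphi^{-1}(U_n)$ is a disjoint union of $p$ copies of $U_n$.

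It then remains to identify the reduction. The affinoid $U_n$ is a closed disc with finitely many pairwise disjoint open discs deleted, so $\overline{U_n}$ is a nonsingular affine rational curve (a copy of $\mathbb{P}^1_k$ with finitely many points removed); the splitting above gives $\overline{\varphi^{-1}(U_n)}\cong\bigsqcup_{c\in\mathbb{F}_p}\overline{U_n}$, and Condition \ref{conditionaffinoid} follows. I expect the one point needing care to be the verification that $T$ really maps the complete ball $A^{\circ\circ}$ into itself and contracts there: this is exactly where reducedness and $\lvert A\rvert_{\sp}=\lvert K\rvert$ are used, to guarantee completeness and that a spectral value $<1$ is already $\leq\lvert\pi\rvert$, after which everything is formal.
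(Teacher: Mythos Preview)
Your contraction/Hensel argument is correct and yields a slightly stronger statement than the paper needs: you split $\varphi^{-1}(U_n)$ into $p$ disjoint copies of $U_n$ already at the level of affinoid algebras. The paper takes the more pedestrian route of writing down the $K^{\circ}$-model $\mathcal{O}(U_n)^{\circ}[y]/(y^p-y-\sum_i z_i)$ and reducing modulo $\pi$ to get $\overline{\mathcal{O}(U_n)}[y]/(y^p-y)$, which then factors over $\mathbb{F}_p\subset k$. Both paths end at the same place, namely $\overline{\varphi^{-1}(U_n)}\cong\bigsqcup_{c\in\mathbb{F}_p}\overline{U_n}$; your version is a bit more conceptual, at the cost of having to justify completeness of $A^{\circ\circ}$ (which, as you note, follows from reducedness and $\lvert A\rvert_{\sp}=\lvert K\rvert$).

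There is one genuine slip in your final paragraph: the assertion that $\overline{U_n}$ is a \emph{nonsingular} rational curve is false in general. In the notation set up before the lemma, when $\lvert b_1\rvert<\lvert b_2\rvert$ the affinoid $U_n$ contains an annulus of inner radius $\lvert b_1\rvert$ and outer radius $\lvert b_2\rvert$, and the canonical reduction of such a piece is $\Spec k[s,t]/(st)$, which has an ordinary double point. So $\overline{U_n}$ can be singular. This does not damage your conclusion, because Condition~\ref{conditionaffinoid} allows ordinary double points; you should replace ``nonsingular affine rational curve'' by the correct statement that $\overline{U_n}$, being a piece of the canonical reduction of $\mathbb{P}^1$ arising from a formal analytic covering by closed-disc-minus-open-discs, has only rational irreducible components and at worst ordinary double points. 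The disjoint union of $p$ copies then inherits both properties, and Condition~\ref{conditionaffinoid} follows.
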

\begin{proof}
Since $\Lambda = \emptyset $, we have $\lvert y \rvert_{\sp} \leq 1 $ on $\varphi^{-1} (U_n)$.
We embed $X$ into $\P^1 \times \P^1$ by $x $ and $y$.
 We have 
 $$\varphi^{-1} (U_n) \subset \{ \,(x,y) \in \P^1 \times \P^1 \mid x \in U_n , \ \lvert y \rvert \leq 1 \,\}.$$
Hence we have 
$$\O (\varphi^{-1} (U_n)) \cong \O (U_n)[y] / (y^p-y-\sum_{i=1}^r z_i).$$
Since $\O ( U_n)^\circ$ is a $K^\circ$-model of $\O(U_n)$,  the $K^\circ$-algebra 
$$\O  (U_n)^\circ [y]/ (y^p-y-\sum_{i=1}^r z_i)$$
 is a $K^\circ$-model of $\O (\varphi^{-1} (U_n))$.
Since $\lvert z_i \rvert <1 $ on $U_n$ for each $ 1 \leq i \leq r$, the residue ring $ \overline{ \O (\varphi^{-1} (U_n))}$ is isomorphic to $$\overline{ \O (U_n)}[y]/(y^p - y),$$ and   $\varphi^{-1}(U_n)$ satisfies  Condition \ref{conditionaffinoid}.
\end{proof}

 In the rest of this section, we assume $\Lambda \neq \emptyset.$ 

\begin{lem}\label{0one}
If $a_i \in U_n$ for some $1\leq i \leq r$, we have $$U_n = \{ \, x \in \P^1 \mid \lvert x - a_i \rvert \leq \alpha_{i,1} \,\}$$ and $a_j \notin U_n $ for $j \neq i$.
\end{lem}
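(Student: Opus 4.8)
The plan is to show first that the hypothesis $a_i \in U_n$ forces $n_i = 0$, and then to use the ultrametric inequality to identify $U_n$ with the closed disk of radius $\alpha_{i,1}$ around $a_i$. First I would apply the $i$-th defining inequality of $U_n$, namely $\alpha_{i,n_i} \leq \lvert x - a_i \rvert \leq \alpha_{i,n_i+1}$, to the point $x = a_i$; this gives $\alpha_{i,n_i} \leq 0$. Every value appearing in the list $\varepsilon_i,\ \lvert \lambda_i \rvert,\ \lvert a_i - a_j \rvert,\ \frac{\lvert a_i - a_j \rvert^2}{\lvert \lambda_j \rvert},\ \frac{\lvert a_i - a_j \rvert^2}{\lvert \lambda_j \rvert} - \varepsilon_i,\ \zeta_{i,j,k}$ whose ascending arrangement defines $\alpha_{i,1} \leq \cdots \leq \alpha_{i,M_i-1}$ lies in $\lvert K^\times \rvert$ and is therefore strictly positive; in particular $\frac{\lvert a_i - a_j \rvert^2}{\lvert \lambda_j \rvert} - \varepsilon_i > 0$ because $\varepsilon_i < \frac{\lvert a_i - a_j \rvert^2}{\lvert \lambda_j \rvert}$ follows from $\lvert \lambda_i \rvert < \frac{\lvert a_i - a_j \rvert^2}{\lvert \lambda_j \rvert} - \varepsilon_i$. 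Since $\alpha_{i,0} = 0$ is the only vanishing value, $\alpha_{i,n_i} \leq 0$ forces $n_i = 0$, and the $i$-th condition becomes $\lvert x - a_i \rvert \leq \alpha_{i,1}$, so that $U_n \subseteq \{\, x \in \P^1 \mid \lvert x - a_i \rvert \leq \alpha_{i,1} \,\}$.

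The key quantitative input is the inequality $\alpha_{i,1} < \lvert a_i - a_j \rvert$ for every $j \neq i$. Since $\varepsilon_i$ is one of the values whose sorted arrangement yields the $\alpha_{i,\bullet}$, we have $\alpha_{i,1} \leq \varepsilon_i$, and $\varepsilon_i < \lvert a_i - a_j \rvert$ holds by the choice of $\varepsilon_i$. With this in hand I would invoke the ultrametric inequality: for any $x$ with $\lvert x - a_i \rvert \leq \alpha_{i,1} < \lvert a_i - a_j \rvert$ and any $j \neq i$, the identity $x - a_j = (x - a_i) + (a_i - a_j)$ gives $\lvert x - a_j \rvert = \lvert a_i - a_j \rvert$. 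Thus on the disk $\{\, \lvert x - a_i \rvert \leq \alpha_{i,1} \,\}$ the quantity $\lvert x - a_j \rvert$ is constant and equal to $\lvert a_i - a_j \rvert$ for each $j \neq i$.

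It then remains to prove the reverse inclusion together with the statement about the other branch points. Because $a_i \in U_n$, the $j$-th defining inequality yields $\alpha_{j,n_j} \leq \lvert a_i - a_j \rvert \leq \alpha_{j,n_j+1}$ for each $j \neq i$; by the previous paragraph this same chain holds with $\lvert a_i - a_j \rvert$ replaced by $\lvert x - a_j \rvert$ for every $x$ in the disk $\{\, \lvert x - a_i \rvert \leq \alpha_{i,1} \,\}$. Hence every such $x$ satisfies all $r$ defining conditions of $U_n$, which gives the inclusion $\{\, x \mid \lvert x - a_i \rvert \leq \alpha_{i,1} \,\} \subseteq U_n$ and therefore equality. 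Finally, for $j \neq i$ we have $\lvert a_j - a_i \rvert = \lvert a_i - a_j \rvert > \alpha_{i,1}$, so $a_j \notin U_n$.

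The computation is essentially routine once the positivity of the $\alpha_{i,m}$ for $m \geq 1$ and the single inequality $\alpha_{i,1} < \lvert a_i - a_j \rvert$ are established. The only point requiring care is the bookkeeping that lets the ultrametric equality $\lvert x - a_j \rvert = \lvert a_i - a_j \rvert$ transfer the membership conditions from the single point $a_i$ to the entire disk, since this is precisely what renders the conditions indexed by $j \neq i$ redundant and collapses $U_n$ to the disk around $a_i$.
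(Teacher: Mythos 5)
Your proposal is correct and follows essentially the same route as the paper: deduce $n_i=0$ from $a_i\in U_n$, establish $\alpha_{i,1}\leq\varepsilon_i<\lvert a_i-a_j\rvert$, and use the ultrametric equality $\lvert x-a_j\rvert=\lvert a_i-a_j\rvert$ on the disk to show the conditions indexed by $j\neq i$ are automatically satisfied (the paper phrases this via non-emptiness of the intersection with each annulus, while you instantiate at the point $a_i$, but these are the same argument). The concluding step that $a_j\notin U_n$ is also identical.
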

\begin{proof}
Since $a_i \in U_n$, we have $n_i=0$.
For $j \neq i$, since $U_n $ is non-empty, we have 
$$ \{ \, x \in \P^1 \mid \lvert x - a_i \rvert \leq \alpha_{i,1}  \, \} \cap \{ \, x \in \P^1 \mid \alpha_{j,n_j} \leq \lvert x - a_j \rvert \leq \alpha_{j,n_j+1} \, \} \neq \emptyset .$$
Since $\varepsilon_i < \lvert a_i - a_j \rvert $, we have $ \alpha_{i,1} <  \lvert a_i - a_j \rvert$.
Hence we have 
   \begin{align*}
  &  \{ \, x \in \P^1 \mid \lvert x - a_i \rvert \leq \alpha_{i,1} \, \} \cap \{ \, x \in \P^1 \mid \alpha_{j,n_j} \leq \lvert x - a_j \rvert \leq \alpha_{j,n_j+1} \, \}  \\
  &= \{ \, x \in \P^1 \mid \lvert x - a_i \rvert \leq \alpha_{i,1}\, \} .
    \end{align*}
We have 
$$U_n = \bigcap_{j=1}^r \{ \, x \in \P^1 \mid \alpha_{j,n_j} \leq \lvert x - a_j \rvert \leq \alpha_{j,n_j+1} \, \} =  \{ \, x \in \P^1 \mid \lvert x - a_i \rvert \leq \alpha_{i,1}\, \}.$$
For $j \neq i$, since $a_j \notin \{ \,x \in \P^1 \mid \lvert x - a_i \rvert \leq \alpha_{i,1} \,\}$, we have $a_j \notin U_n$.
\end{proof}

By Lemma \ref{0one}, if $a_i \in U_n$ for some $i$, we have $b_1=0$ and $\lvert b_2 \rvert = \alpha_{i,1}$.

For each $a \in \P^1$, we put 
  $$\dist(a,U_n):= \inf_{u \in U_n} \lvert a-u \rvert .$$

\begin{lem}\label{aib1b2}
For $1 \leq i \leq r$, we have  $\dist(a_i,U_n) = \lvert b_1 \rvert $ or $\dist(a_i,U_n) \geq \lvert b_2 \rvert$.
\end{lem}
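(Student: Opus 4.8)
The plan is to run an ultrametric case analysis according to the position of $a_i$ relative to the big disc $B := \{\, x \in \P^1 \mid \lvert x - d_0 \rvert \le \lvert b_2 \rvert \,\}$ and the excised discs $D_0, \dots, D_{N_n}$, using the explicit description of $U_n$ recalled just before the lemma together with Lemma \ref{un}. Throughout I use that $U_n \neq \emptyset$ (as $n \in J'$), that the $D_\nu$ are pairwise disjoint, and that each $D_\nu \subseteq B$, so that $B \setminus U_n = \bigcup_\nu D_\nu$.

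First I would dispose of the two extreme cases. If $\lvert a_i - d_0 \rvert > \lvert b_2 \rvert$, then every $u \in U_n \subseteq B$ satisfies $\lvert u - d_0 \rvert \le \lvert b_2 \rvert < \lvert a_i - d_0 \rvert$, so the ultrametric inequality gives $\lvert a_i - u \rvert = \lvert a_i - d_0 \rvert > \lvert b_2 \rvert$, whence $\dist(a_i, U_n) = \lvert a_i - d_0 \rvert > \lvert b_2 \rvert$, the second alternative. If instead $a_i \in U_n$, then Lemma \ref{0one} forces $b_1 = 0$ and $\dist(a_i, U_n) = 0 = \lvert b_1 \rvert$, the first alternative.

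Next comes the main case, $a_i \in B \setminus U_n$, so that $a_i$ lies in exactly one disc $D_{\nu_0}$. Write $\rho$ for its radius, i.e.\ $\rho = \lvert b_1 \rvert$ if $\nu_0 = 0$ and $\rho = \lvert d_0 - d_{\nu_0} \rvert$ if $\nu_0 \ge 1$. Because $u \notin D_{\nu_0}$ and $\lvert a_i - d_{\nu_0} \rvert < \rho \le \lvert u - d_{\nu_0} \rvert$ for every $u \in U_n$, the ultrametric inequality yields $\lvert a_i - u \rvert = \lvert u - d_{\nu_0} \rvert$, so $\dist(a_i, U_n) = \dist(d_{\nu_0}, U_n) \ge \rho$. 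By Lemma \ref{un} we have $\rho \in \{\lvert b_1 \rvert, \lvert b_2 \rvert\}$; if $\rho = \lvert b_2 \rvert$ this already gives $\dist(a_i, U_n) \ge \lvert b_2 \rvert$, the second alternative, with no further work.

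It remains to treat $\rho = \lvert b_1 \rvert$, where I must upgrade the bound $\ge \lvert b_1 \rvert$ to an equality, and this is the main obstacle. I would reduce it to the single existence statement that $U_n$ contains a point $u$ on the inner sphere $\{\, x \mid \lvert x - d_0 \rvert = \lvert b_1 \rvert \,\}$. Granting such a $u$: if $\nu_0 = 0$ then $\lvert u - a_i \rvert = \lvert u - d_0 \rvert = \lvert b_1 \rvert$ directly; if $\nu_0 \ge 1$ then $\lvert d_0 - d_{\nu_0} \rvert = \rho = \lvert b_1 \rvert$ by Lemma \ref{un}, and since $u \notin D_{\nu_0}$ we get $\lvert b_1 \rvert \le \lvert u - d_{\nu_0} \rvert \le \max(\lvert u - d_0 \rvert, \lvert d_0 - d_{\nu_0} \rvert) = \lvert b_1 \rvert$, hence $\lvert u - a_i \rvert = \lvert u - d_{\nu_0} \rvert = \lvert b_1 \rvert$; in either case $\dist(a_i, U_n) \le \lvert b_1 \rvert$ and equality follows. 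To prove the existence claim I would reduce modulo the uniformizer: by Lemma \ref{un} the discs meeting the sphere $\{\, \lvert x - d_0 \rvert = \lvert b_1 \rvert \,\}$ are exactly the $D_\nu$ with $\lvert d_0 - d_\nu \rvert = \lvert b_1 \rvert$, each removing a single residue class from the sphere, and one checks that these finitely many classes do not exhaust it. This is where the nonemptiness of $U_n$ and the separation of radii built into the construction of the $\alpha_{i,\bullet}$ (through the auxiliary quantities $\varepsilon_i$ and $\zeta_{i,j,k}$) enter, and I expect it to be the delicate point of the argument.
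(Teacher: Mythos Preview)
Your case analysis is correct and is exactly the argument the paper has in mind; the paper simply compresses it to the single phrase ``by Lemma~\ref{un}'' together with the appeal to Lemma~\ref{0one} for the case $a_i\in U_n$. The point you flag as delicate---producing a $u\in U_n$ on the inner sphere $\{\lvert x-d_0\rvert=\lvert b_1\rvert\}$---is in fact routine and does not involve the auxiliary constants $\varepsilon_i,\zeta_{i,j,k}$: by Lemma~\ref{un} only finitely many of the $D_\nu$ meet this sphere, each occupying a single residue class, and since rigid points are valued in finite extensions of $K$ (equivalently, since the paper freely enlarges $K$), the residue field may be taken large enough that a class, hence a point $u$, remains.
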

\begin{proof}
For $1\leq i \leq r$, by Lemma \ref{un}, we have 
$\dist(a_i,U_n)=0$,  $\dist(a_i,U_n) = \lvert b_1 \rvert $, or $\dist(a_i,U_n) \geq \lvert b_2 \rvert$.
Moreover, if  $\dist(a_i,U_n)=0$ for some $i$, we have $a_i \in U_n$, hence $b_1=0$.
\end{proof}

\begin{lem}\label{aib2}
If $\dist (a_i, U_n) > \lvert b_1 \rvert$ for some $i$,
we have $ \lvert x-a_i \rvert  = \lvert d_0-a_i \rvert $ 
for every $x \in U_n $. 
In particular, $  \lvert x-a_i \rvert = \dist(a_i,U_n) $ for every  $x \in U_n$.
\end{lem}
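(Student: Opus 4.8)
The plan is to fix $x \in U_n$ and read off $\lvert x - a_i \rvert$ from the ultrametric identity $x - a_i = (x - d_0) + (d_0 - a_i)$, keeping in mind that every point of $U_n$ satisfies $\lvert x - d_0 \rvert \le \lvert b_2 \rvert$ and $x \notin D_\nu$ for all $\nu$. Since $\dist(a_i, U_n) > \lvert b_1 \rvert \ge 0$, the point $a_i$ does not lie in $U_n$; hence either $a_i$ lies outside the large disc, that is $\lvert d_0 - a_i \rvert > \lvert b_2 \rvert$, or $a_i$ lies in one of the removed discs $D_\nu$. In the first case, for every $x \in U_n$ we have $\lvert x - d_0 \rvert \le \lvert b_2 \rvert < \lvert d_0 - a_i \rvert$, so the ultrametric inequality immediately yields $\lvert x - a_i \rvert = \lvert d_0 - a_i \rvert$, and the claim follows.

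Suppose instead that $a_i \in D_\nu$ for some $\nu$. First I would produce a point $x_0 \in U_n$ on the inner boundary, i.e.\ with $\lvert x_0 - d_0 \rvert = \lvert b_1 \rvert$ (taking $x_0 = d_0$ when $\lvert b_1 \rvert = 0$). If $\lvert d_0 - a_i \rvert \le \lvert b_1 \rvert$ --- which happens in particular when $\nu = 0$ or when $D_\nu$ has radius $\lvert b_1 \rvert$ --- then $\lvert x_0 - a_i \rvert \le \max(\lvert x_0 - d_0 \rvert, \lvert d_0 - a_i \rvert) = \lvert b_1 \rvert$, whence $\dist(a_i, U_n) \le \lvert b_1 \rvert$, contradicting the hypothesis; so this sub-case cannot occur. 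If instead $\lvert d_0 - a_i \rvert > \lvert b_1 \rvert$, then necessarily $\nu \ge 1$, and from $a_i \in D_\nu$ we get $\lvert a_i - d_\nu \rvert < \lvert d_0 - d_\nu \rvert$, so $\lvert d_0 - a_i \rvert = \lvert d_0 - d_\nu \rvert$; by Lemma~\ref{un} this value is $\lvert b_1 \rvert$ or $\lvert b_2 \rvert$, and being $> \lvert b_1 \rvert$ it must equal $\lvert b_2 \rvert$. Then for any $x \in U_n$ the condition $x \notin D_\nu$ gives $\lvert x - d_\nu \rvert \ge \lvert d_0 - d_\nu \rvert = \lvert b_2 \rvert > \lvert a_i - d_\nu \rvert$, so $\lvert x - a_i \rvert = \lvert x - d_\nu \rvert \ge \lvert b_2 \rvert$; combined with $\lvert x - a_i \rvert \le \max(\lvert x - d_0 \rvert, \lvert d_0 - a_i \rvert) = \lvert b_2 \rvert$, this forces $\lvert x - a_i \rvert = \lvert b_2 \rvert = \lvert d_0 - a_i \rvert$, as desired.

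The main obstacle I anticipate is the existence of the inner-boundary point $x_0 \in U_n$: a priori the removed discs might cover the entire sphere $\{\,\lvert x - d_0 \rvert = \lvert b_1 \rvert\,\}$. The resolution is that only finitely many of the $D_\nu$ meet this sphere, each occupying a single residue class of radius $\lvert b_1 \rvert$, while the residue field of $\overline{K}$ is infinite; hence the sphere minus these holes is non-empty, and any such point automatically lies in $U_n$. The other delicate point is the careful bookkeeping of the strict-versus-equal cases of the ultrametric inequality, which is exactly why both constraints $\lvert x - d_0 \rvert \le \lvert b_2 \rvert$ and $x \notin D_\nu$ have to be invoked in the last step. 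The concluding ``in particular'' assertion is then automatic: a function constantly equal to $\lvert d_0 - a_i \rvert$ on $U_n$ has infimum $\lvert d_0 - a_i \rvert$, so $\dist(a_i, U_n) = \lvert d_0 - a_i \rvert$.
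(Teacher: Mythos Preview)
Your argument is correct. The organization differs from the paper's in one respect worth noting: the paper opens by invoking Lemma~\ref{aib1b2} to deduce immediately that $\dist(a_i,U_n)\ge\lvert b_2\rvert$, and then splits into the two cases $\dist(a_i,U_n)=\lvert b_2\rvert$ and $\dist(a_i,U_n)>\lvert b_2\rvert$. This bypasses your Subcase~2a entirely, so the paper never needs to exhibit an inner-boundary point $x_0$. Your route is more self-contained (it does not appeal to Lemma~\ref{aib1b2}) but pays for this by having to argue that the sphere $\{\lvert x-d_0\rvert=\lvert b_1\rvert\}$ is not exhausted by the removed discs; your observation that only the $D_\nu$ with $\lvert d_0-d_\nu\rvert=\lvert b_1\rvert$ meet this sphere, each in a single residue disc, and that the residue field of $\overline K$ is infinite, handles this cleanly. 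The remaining cases (your Case~1 and Subcase~2b) match the paper's two cases essentially verbatim.
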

\begin{proof}
By Lemma \ref{aib1b2},  we have $\dist(a_i,U_n) \geq \lvert b_2 \rvert$.
First, we assume $\dist (a_i, U_n) = \lvert b_2 \rvert$. 
Then, by Lemma \ref{un}, we have $a_i \in D_{\nu(i)}$ for some $\nu(i)$ with $\lvert d_0 - d_{\nu(i)} \rvert =  \lvert b_2 \rvert$.
Since $\lvert a_i - d_{\nu(i)} \rvert < \lvert b_2 \rvert $, we have $ \lvert d_0 -a_i \rvert  = \lvert b_2 \rvert $.
We have
\begin{align*}
U_n 
& \subset   \{ \, x \in \P^1 \mid \lvert x - d_0 \rvert \leq \lvert b_2 \rvert  \ \text{and} \ \lvert x - d_{\nu(i)} \rvert \geq \lvert b_2 \rvert \,\} \\
& =  \{ \,x \in \P^1 \mid \lvert x - a_i \rvert = \lvert b_2 \rvert \,\}\\
& =  \{ \,x \in \P^1 \mid \lvert x - a_i \rvert = \lvert d_0 -a_i \rvert\,  \}.
\end{align*}

Next, we assume  $\dist (a_i, U_n) > \lvert b_2 \rvert$.
Then, for any $x \in U_n$, we have 
$$\lvert x-d_0 \rvert \leq \lvert b_2 \rvert < \dist(a_i,U_n) \leq  \lvert x-a_i \rvert .$$
Hence we have $\lvert x-a_i \rvert =\lvert d_0 - a_i \rvert$ for any $x \in U_n$.
\end{proof}

  \begin{lem}\label{mini}
  We have $\dist (a_i,U_n) \neq \dist(a_j,U_n)$ for any $i , j \in \Lambda $ with $i \neq j$.
    \end{lem}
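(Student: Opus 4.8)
The plan is to argue by contradiction. Suppose there are distinct $i,j \in \Lambda$ with $\dist(a_i,U_n)=\dist(a_j,U_n)=:d$; I will contradict the standing hypothesis \eqref{assumption}.

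First I would record a strict lower bound for $|a_i-a_j|$. Since $i\in\Lambda$, there is $x\in U_n$ with $|x-a_i|\le|\lambda_i|$, so $\dist(a_i,U_n)\le|\lambda_i|$, and likewise $\dist(a_j,U_n)\le|\lambda_j|$. Hence $d^2\le|\lambda_i\lambda_j|$, and \eqref{assumption} gives $d^2<|a_i-a_j|^2$, i.e.\ $d<|a_i-a_j|$. It therefore suffices to prove the reverse inequality $|a_i-a_j|\le d$.

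By Lemma \ref{aib1b2} one has $d=|b_1|$ or $d\ge|b_2|\ge|b_1|$, so $d\ge|b_1|$ always, and I would split into the cases $d>|b_1|$ and $d=|b_1|$. If $d>|b_1|$, then Lemma \ref{aib2} applies to both indices and gives $|x-a_i|=|x-a_j|=d$ for every $x\in U_n$; fixing such an $x$ and writing $a_i-a_j=(x-a_j)-(x-a_i)$, the ultrametric inequality yields $|a_i-a_j|\le\max(|x-a_i|,|x-a_j|)=d$, as required.

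The case $d=|b_1|$ is the crux, since there Lemma \ref{aib2} does not apply and one must use the explicit shape of $U_n$. If $|b_1|=0$, then $d=0$ forces $a_i,a_j\in U_n$, contradicting Lemma \ref{0one}; so assume $|b_1|>0$. Then $\dist(a_i,U_n)=|b_1|>0$ means $a_i$ lies in one of the deleted balls $D_\nu$, and since the distance to $U_n$ of an interior point of $D_\nu$ equals the radius of $D_\nu$ (namely $|b_1|$ for $D_0$ and $|d_0-d_\nu|$ for $\nu\ge1$), that radius must equal $|b_1|$. By Lemma \ref{un} every such ball of radius $|b_1|$ is contained in $\{\,x\in\P^1\mid |x-d_0|\le|b_1|\,\}$: this is clear for $D_0$, while for $D_\nu$ with $|d_0-d_\nu|=|b_1|$ one gets $|x-d_0|=|d_0-d_\nu|=|b_1|$ whenever $|x-d_\nu|<|d_0-d_\nu|$. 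Thus $|a_i-d_0|\le|b_1|$, and symmetrically $|a_j-d_0|\le|b_1|$, so $|a_i-a_j|\le|b_1|=d$, contradicting $d<|a_i-a_j|$. The main obstacle is exactly this last case: one has to read off from the ball structure of $U_n$ that both $a_i$ and $a_j$ sit in the same closed ball of radius $|b_1|$ centered at $d_0$.
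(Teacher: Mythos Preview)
Your argument is correct and follows essentially the same route as the paper's proof: contradiction, the case split $d=|b_1|$ versus $d>|b_1|$, Lemma~\ref{aib2} for the second case, and the observation that in the first case both $a_i$ and $a_j$ lie within $|b_1|$ of $d_0$. The only slightly loose step is the claim that the distance from an interior point of $D_\nu$ to $U_n$ \emph{equals} the radius of $D_\nu$; what you actually need (and what is immediate) is that this distance is \emph{at least} the radius, which together with the fact that every radius lies in $\{|b_1|,|b_2|\}$ already forces the radius to be $|b_1|$.
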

  \begin{proof}
 Assume that we have $ \dist (a_i,U_n) =\dist(a_j,U_n)$ for some $i , j \in \Lambda $ with $i \neq j$. 
 We put  $d :=\dist (a_i,U_n) =\dist(a_j,U_n)  .$
 Then we have $d \leq \lvert \lambda_i \rvert $ and $d \leq \lvert \lambda_j \rvert$.
By Lemma \ref{aib1b2},  we have $ d= \lvert b_1 \rvert$ or $ d  \geq \lvert b_2 \rvert$.
By Lemma \ref{0one},    we have $d >0$.

First, we assume $d = \lvert b_1 \rvert$. 
By Lemma \ref{un}, we have $a_i \in D_{\nu(i)}$ for some $\nu(i)$ with $\lvert d_0 - d_{\nu(i)} \rvert = \lvert b_1 \rvert$.
Since $\lvert a_i - d_{\nu(i)} \rvert < \lvert b_1 \rvert$, we have $\lvert a_i - d_0  \rvert = \lvert b_1 \rvert$.
Similarly,  we have $\lvert a_j - d_0 \rvert = \lvert b_1 \rvert$.
Hence we have 
$$\lvert a_i - a_j \rvert \leq \max\{ \,\lvert d_0 - a_i \rvert , \ \lvert d_0 - a_j \rvert \,\} = \lvert b_1 \rvert = d \leq  \min \{ \,\lvert \lambda_i \rvert, \lvert \lambda_j \rvert \,\},$$
which contradicts the inequality \eqref{assumption}.

Next, we assume $d > \lvert b_1 \rvert$.
By Lemma \ref{aib2}, we have $\lvert x - a_i \rvert = d = \lvert x - a_j \rvert $ for any $x \in U_n$.
Hence 
we have 
$$\lvert a_i - a_j \rvert  \leq \max\{\, \lvert x - a_i \rvert , \ \lvert x - a_j \rvert \, \} =d  \leq \min \{\, \lvert \lambda_i \rvert, \ \lvert \lambda_j \rvert\, \}$$
for  any $ x\in U_n$,
  which contradicts the inequality \eqref{assumption}. 
  \end{proof}
   By Lemma \ref{mini}, there exists a unique element $m \in \Lambda $ satisfying 
 $$ \dist(a_m, U_n)  = \min_{i \in \Lambda} \dist(a_i,U_n).$$

\begin{lem}\label{z'i}
For any $i \in \Lambda \setminus \{ m \}$, we have 
  $$\left| \frac{\lambda_i}{x-a_i} +\frac{\lambda_i}{a_i-a_m} \right|   <1  $$
  for every $x \in U_n$.
\end{lem}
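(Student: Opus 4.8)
The plan is to rewrite the expression as a single fraction and reduce the claim to a comparison of absolute values, after which everything comes down to pinning down the three quantities $\lvert x-a_i\rvert$, $\lvert a_i-a_m\rvert$, $\lvert x-a_m\rvert$ on $U_n$. Combining the two terms gives
\[
\frac{\lambda_i}{x-a_i}+\frac{\lambda_i}{a_i-a_m}=\frac{\lambda_i\,(x-a_m)}{(x-a_i)(a_i-a_m)},
\]
so the assertion is equivalent to $\lvert\lambda_i\rvert\,\lvert x-a_m\rvert<\lvert x-a_i\rvert\,\lvert a_i-a_m\rvert$ for every $x\in U_n$.

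First I would control the two factors on the right. Since $m$ is the unique minimizer of $\dist(\cdot,U_n)$ on $\Lambda$, Lemma \ref{mini} gives $\dist(a_i,U_n)>\dist(a_m,U_n)$, while $\dist(a_m,U_n)\ge\lvert b_1\rvert$ by Lemma \ref{aib1b2}; hence $\dist(a_i,U_n)>\lvert b_1\rvert$, and Lemma \ref{aib2} shows that $\lvert x-a_i\rvert=\lvert d_0-a_i\rvert=\dist(a_i,U_n)=:\delta_i$ is constant on $U_n$, with $\delta_i\le\lvert\lambda_i\rvert$ because $i\in\Lambda$. Next, choosing $u\in U_n$ with $\lvert u-a_m\rvert<\delta_i$ (possible since $\dist(a_m,U_n)<\delta_i$) and applying the ultrametric inequality twice, to $a_i-a_m=(a_i-u)+(u-a_m)$ and to $a_i-u=(a_i-a_m)+(a_m-u)$, forces $\lvert a_i-a_m\rvert=\delta_i$ as well. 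Thus the target inequality becomes $\lvert\lambda_i\rvert\,\lvert x-a_m\rvert<\delta_i^2$, and it remains only to bound $\sup_{x\in U_n}\lvert x-a_m\rvert$.

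Here I would split according to the value of $\dist(a_m,U_n)$. If $\dist(a_m,U_n)>\lvert b_1\rvert$, then by Lemma \ref{aib2} the function $\lvert x-a_m\rvert=\dist(a_m,U_n)\le\lvert\lambda_m\rvert$ is constant, and the assumption \eqref{assumption} immediately gives $\lvert\lambda_i\rvert\,\lvert x-a_m\rvert\le\lvert\lambda_i\lambda_m\rvert<\lvert a_i-a_m\rvert^2=\delta_i^2$. The remaining case $\dist(a_m,U_n)=\lvert b_1\rvert$ is the crux, since there $\lvert x-a_m\rvert$ need not be constant; from $U_n\subset\{\lvert x-d_0\rvert\le\lvert b_2\rvert\}$ and $\dist(a_m,U_n)\le\lvert b_2\rvert$ one only gets the crude bound $\lvert x-a_m\rvert\le\beta_n=\lvert b_2\rvert$, which is by itself too weak because $\delta_i\le\lvert\lambda_i\rvert$.

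The hard part is precisely this last case, and it is where the careful choice of the radii $\varepsilon_m$ is used. The equality $\dist(a_m,U_n)=\lvert b_1\rvert$ forces $\alpha_{m,n_m}\le\lvert b_1\rvert\le\lvert\lambda_m\rvert$ (the first inequality because every $x\in U_n$ satisfies $\lvert x-a_m\rvert\ge\alpha_{m,n_m}$, the second because $m\in\Lambda$). By the defining property of $\varepsilon_m$ we have $\lvert\lambda_m\rvert<\frac{\lvert a_m-a_i\rvert^2}{\lvert\lambda_i\rvert}-\varepsilon_m$, and the right-hand side is itself one of the values listed in the definition of the $\alpha_{m,j}$. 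Being strictly larger than $\alpha_{m,n_m}$, it must be at least $\alpha_{m,n_m+1}$, whence
\[
\beta_n\le\alpha_{m,n_m+1}\le\frac{\lvert a_m-a_i\rvert^2}{\lvert\lambda_i\rvert}-\varepsilon_m<\frac{\delta_i^2}{\lvert\lambda_i\rvert},
\]
using $\lvert a_m-a_i\rvert=\delta_i$. Therefore $\lvert\lambda_i\rvert\,\lvert x-a_m\rvert\le\lvert\lambda_i\rvert\,\beta_n<\delta_i^2$, which finishes this case and the proof. I expect the only genuinely delicate bookkeeping to be here: checking that the relevant value indeed occurs in the sorted list and that $\alpha_{m,n_m+1}$ is a legitimate (finite) entry, the latter following from $\alpha_{m,n_m}<\alpha_{m,n_m+1}$; the rest is ultrametric arithmetic together with the reductions above.
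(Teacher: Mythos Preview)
Your proof is correct and follows essentially the same path as the paper's: both reduce to showing $\lvert\lambda_i\rvert\,\lvert x-a_m\rvert<\lvert a_i-a_m\rvert^2$ after establishing $\lvert x-a_i\rvert=\lvert a_i-a_m\rvert$ via Lemma~\ref{aib2}, and both finish by using that $\frac{\lvert a_m-a_i\rvert^2}{\lvert\lambda_i\rvert}-\varepsilon_m$ appears in the sorted list defining the $\alpha_{m,j}$ and strictly exceeds $\alpha_{m,n_m}$, forcing $\alpha_{m,n_m+1}$ (and hence $\sup_{x\in U_n}\lvert x-a_m\rvert$) to lie below it.

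The only structural difference is that your case split is unnecessary: your Case~2 argument works verbatim in Case~1 as well, since the chain $\alpha_{m,n_m}\le\lvert\lambda_m\rvert<\frac{\lvert a_m-a_i\rvert^2}{\lvert\lambda_i\rvert}-\varepsilon_m$ uses only $m\in\Lambda$ and never the hypothesis $\dist(a_m,U_n)=\lvert b_1\rvert$; and the bound $\lvert x-a_m\rvert\le\alpha_{m,n_m+1}$ holds on $U_n$ by definition, without passing through $\beta_n$. The paper runs exactly this unified argument. Your Case~1 shortcut via \eqref{assumption} is a pleasant observation but buys nothing extra.
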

\begin{proof}
 Since $ \dist(a_m, U_n) <  \dist(a_i, U_n)$, by Lemma \ref{aib1b2}, 
 we have $ \dist(a_i, U_n) > \lvert b_1 \rvert $.
 By Lemma \ref{aib2}, we have 
 $\lvert x - a_i \rvert = \dist(a_i, U_n) $ for every $x \in U_n$.
 For  $x \in U_n$  satisfying $\lvert x - a_m \rvert = \dist(a_m, U_n)$,
we have   $\lvert x - a_m \rvert   < \lvert x - a_i \rvert .$
 Hence we have $\lvert x - a_i \rvert=\lvert a_m- a_i \rvert$  for every $x \in U_n$.
 
 Since $m \in \Lambda$, we have $\alpha_{m,n_m}\leq \lvert \lambda_m \rvert $.
 Since $\lvert \lambda_m \rvert <\lvert a_m - a_i \rvert^2 \cdot \lvert \lambda_i \rvert^{-1}  - \varepsilon_i $,
 we have  $\alpha_{m,n_m+1} \leq \lvert a_m - a_i \rvert^2 \cdot \lvert \lambda_i \rvert^{-1}  - \varepsilon_i $.
 Hence we  have
$\lvert x- a_m \rvert  \leq  \lvert a_m - a_i \rvert^2 \cdot \lvert \lambda_i \rvert^{-1}  - \varepsilon_i $
for every $x \in U_n$.

Consequently,  for every  $x \in U_n$, we have 
\begin{align*}
    \left| \frac{\lambda_i}{x-a_i} +\frac{\lambda_i}{a_i-a_m} \right| 
&  =  \frac{\lvert \lambda_i \rvert \cdot \lvert x-a_m \rvert}{\lvert x-a_i \rvert  \cdot \lvert a_i-a_m \rvert }  \\
&=  \frac{\lvert \lambda_i \rvert }{\lvert a_m-a_i \rvert^2 } \bigg(   \frac{ \lvert a_m-a_i \rvert^2}{\lvert \lambda_i \rvert } - \varepsilon_i \bigg) \\
& <1.
\end{align*}
\end{proof}

We put 
\begin{align*}
f &:= \sum_{i \in \Lambda \setminus \{m\}} \bigg(z_i + \frac{\lambda_i}{a_i-a_m} \bigg) + \sum_{i \notin \Lambda} z_i ,\\
C&:= -\sum_{i \in \Lambda \setminus \{m\}} \frac{\lambda_i}{a_i-a_m}.
\end{align*}
Then we have 
 $$y^p-y = \sum_{i=1}^{r} z_i = z_m  + C + f .$$

By Lemma \ref{z'i}, we have  $\lvert f \rvert_{\sp} < 1 $ on $U_n$.

\begin{lem}\label{z}
There exist  $ b_1'  \in  K^\times , \  b_2' \in  K^\times  \cup \{ \infty \}$, and  $C'  \in K$ satisfying 
$ \lvert b_1' \rvert \leq \lvert b_2' \rvert  \leq1 $ or $ 1 \leq \lvert b_1' \rvert \leq \lvert b_2' \rvert  $, and  
 $$U_n= \{ \, x \in \P^1 \mid  \lvert b_1'\rvert \leq \lvert z_m+C' \rvert \leq  \lvert b_2' \rvert \, \}  \setminus  \bigcup_{\nu =1}^N D_ {\nu}',$$ 
  where
  $$D_{\nu}' = \{\, x \in \P^1 \mid \lvert z_m + C'-d_{\nu}' \rvert < \lvert d_{\nu}' \rvert \,\}, $$ 
  for some   $  d_\nu' \in K^\times $ 
  with    $ \lvert d_\nu' \rvert = \lvert b_1' \rvert $ or $ \lvert d_\nu' \rvert = \lvert b_2' \rvert $.
\end{lem}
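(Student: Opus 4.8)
The plan is to transport the description of $U_n$ through the coordinate $z_m=\lambda_m(x-a_m)^{-1}$, the automorphism of $\P^1$ that sends the nearest branch point $a_m$ to $\infty$ and $\infty$ to $0$. Since $U_n$ is a closed disk with finitely many open disks deleted, its image under $z_m$ is again of this shape, so the task reduces to locating the center of the new annulus, its two radii $\lvert b_1'\rvert\le\lvert b_2'\rvert$, and the deleted disks. The feature that makes the deleted disks come out in the prescribed form $\{\lvert z_m+C'-d_\nu'\rvert<\lvert d_\nu'\rvert\}$ is that, by Lemma \ref{un}, every $D_\nu$ $(1\le\nu\le N)$ has $d_0$ on its boundary, while $\{\lvert x-d_0\rvert\le\beta_n\}$ and $D_0$ are centered at $d_0$; so the right normalization is to send $d_0$ to the origin. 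Using Lemmas \ref{0one}, \ref{aib1b2} and \ref{aib2} I divide into three cases according to the location of $a_m$: (i) $a_m\in U_n$; (ii) $\dist(a_m,U_n)=\lvert b_1\rvert$; (iii) $\dist(a_m,U_n)\ge\beta_n$. In cases (ii) and (iii) one has $\lvert b_1\rvert>0$, since $a_m\notin U_n$ forces (by Lemma \ref{0one}) that no branch point lies in $U_n$.

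In case (i) Lemma \ref{0one} gives $U_n=\{\lvert x-a_m\rvert\le\alpha_{m,1}\}$, whose image is $\{\lvert z_m\rvert\ge\lvert\lambda_m\rvert/\alpha_{m,1}\}$; I take $C'=0$, $b_2'=\infty$, $\lvert b_1'\rvert=\lvert\lambda_m\rvert/\alpha_{m,1}$. In case (ii), arguing as in the proof of Lemma \ref{mini}, $a_m$ lies in one of the holes $D_{\nu(m)}$ with $\lvert a_m-d_0\rvert=\lvert b_1\rvert$; taking $C'=0$, the hole $D_{\nu(m)}$ inflates to a neighborhood of $\infty$ and yields the outer radius $\lvert b_2'\rvert=\lvert\lambda_m\rvert/\lvert b_1\rvert$, while $\{\lvert x-d_0\rvert\le\beta_n\}$ yields $\lvert b_1'\rvert=\lvert\lambda_m\rvert/\beta_n$. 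In case (iii), Lemma \ref{aib2} shows $\lvert x-a_m\rvert$ is constant on $U_n$, equal to $\delta:=\lvert d_0-a_m\rvert=\dist(a_m,U_n)$; since then $\lvert z_m\rvert$ is constant a genuine shift is needed, so I take $C'=\lambda_m(a_m-d_0)^{-1}$, which sends $d_0$ to $0$ and gives $\lvert z_m+C'\rvert=\lvert\lambda_m\rvert\lvert x-d_0\rvert/\delta^2$, running over $[\,\lvert\lambda_m\rvert\lvert b_1\rvert/\delta^2,\ \lvert\lambda_m\rvert\beta_n/\delta^2\,]$ as $\lvert x-d_0\rvert$ runs over $[\,\lvert b_1\rvert,\beta_n\,]$; here $\{\lvert x-d_0\rvert\le\beta_n\}$ and $D_0$ give the outer and inner radii. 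In each case the remaining holes $D_\nu$ become the deleted disks: each has $d_0$ on its boundary (Lemma \ref{un}) --- and in case (ii) also $a_m$ --- and a direct computation gives the image $\{\lvert z_m+C'-d_\nu'\rvert<\lvert d_\nu'\rvert\}$ with $\lvert d_\nu'\rvert\in\{\lvert b_1'\rvert,\lvert b_2'\rvert\}$ according to whether $\lvert d_0-d_\nu\rvert=\lvert b_1\rvert$ or $\beta_n$ (Lemma \ref{un}).

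The step I expect to be the genuine obstacle is the dichotomy $\lvert b_1'\rvert\le\lvert b_2'\rvert\le1$ or $1\le\lvert b_1'\rvert\le\lvert b_2'\rvert$, i.e.\ excluding $\lvert b_1'\rvert<1<\lvert b_2'\rvert$; this is precisely where the elaborate list of breakpoints $\alpha_{i,\cdot}$ earns its keep. In case (i) it is immediate from $\alpha_{m,1}\le\lvert\lambda_m\rvert$. In case (ii) the forbidden inequalities read $\lvert b_1\rvert<\lvert\lambda_m\rvert<\beta_n$; but $\dist(a_m,U_n)=\lvert b_1\rvert\ge\alpha_{m,n_m}$, and $\lvert\lambda_m\rvert$ is itself one of the breakpoints $\alpha_{m,\cdot}$, so $\alpha_{m,n_m}<\lvert\lambda_m\rvert$ forces $\lvert\lambda_m\rvert\ge\alpha_{m,n_m+1}\ge\beta_n$, a contradiction. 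In case (iii), writing $l=l(n,0)$ so that $d_0=a_l$ and $\lvert b_1\rvert=\alpha_{l,n_l}$, the forbidden inequalities give $\alpha_{l,n_l}<\delta^2/\lvert\lambda_m\rvert<\beta_n\le\alpha_{l,n_l+1}$; but $\delta^2/\lvert\lambda_m\rvert=\lvert a_l-a_m\rvert^2/\lvert\lambda_m\rvert$ is by construction one of the breakpoints for the index $l$, hence cannot lie strictly between the consecutive values $\alpha_{l,n_l}$ and $\alpha_{l,n_l+1}$ --- again a contradiction. Thus the dichotomy always holds. Besides this breakpoint bookkeeping, the only thing requiring care is keeping track of which hole becomes the outer sphere and which remains a deleted disk when $a_m$ sits inside a hole; the rest is a routine Möbius computation.
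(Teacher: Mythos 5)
Your choices of $b_1'$, $b_2'$, $C'$ are exactly the paper's (your cases (i) and (ii) together are its first case $\dist(a_m,U_n)=\lvert b_1\rvert$, with $b_1=0$ giving $b_2'=\infty$; your case (iii) is its second case), so this is the same proof — except that you actually carry out the verifications the paper dismisses as "straightforward," in particular the breakpoint argument excluding $\lvert b_1'\rvert<1<\lvert b_2'\rvert$, which is correct. One small bookkeeping point: the swap between the outer boundary and the hole containing $a_m$ that you attribute to case (ii) also occurs in case (iii) when $\delta=\beta_n$ (there $a_m$ again lies in some $D_{\nu(m)}$ with $\lvert d_0-d_{\nu(m)}\rvert=\lvert b_2\rvert$), but your closing remark about tracking which hole becomes the outer sphere covers this.
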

\begin{proof}
If  $\dist(a_m,U_n)= \lvert b_1 \rvert $, we put $  b_1':= \lambda_m b_2^{-1}, \ b_2':= \lambda_m b_1^{-1}$, and $ C' :=0$. (We put $b_2':= \infty$ if $b_1=0$.)
If  $\dist(a_m,U_n) \neq  \lvert b_1 \rvert $,  we put $b_1' := \lambda_m b_1(a_m-d_0)^{-2}$, $b_2' := \lambda_m b_2(a_m-d_0)^{-2}$, and
   $ C' :=\lambda_m(a_m-d_0)^{-1}$.
  In  both cases, we can check  $b_1', b_2'$, and $C'$  satisfy  the conditions of Lemma \ref{z}. Since the computations are straightforward, we omit them.
   \end{proof}

We put $z:= z_m +C'$. We regard it as a coordinate function on $\P^1$. 
By replacing $K$ by its finite extension, there exists $C'' \in K$ satisfying $C''^p-C''= C-C'.$
We put $ y':= y-C''$.
Then we have $y'^p-y' =z+ f $.

If $b_2' \neq \infty$, since 
  $$\O( \lbrace \, z \in \P^1 \mid \lvert b_1' \rvert \leq \lvert z \rvert \leq   \lvert b_2' \rvert \, \rbrace) 
  \cong K \langle b_2'^{-1}z , b_1' z^{-1}  \rangle,$$ 
 the residue ring   $ \overline{ \O (U_n)} $  is isomorphic to a localization of $k[s, t]/(st- \overline{b_1'b_2'^{-1}})$.
 
 If $b_2' = \infty$, since 
  $$\O( \lbrace \, z \in \P^1 \mid \lvert b_1' \rvert \leq \lvert z \rvert   \, \rbrace) 
  \cong K \langle b_1' z^{-1}  \rangle,$$ 
 the residue ring   $ \overline{ \O (U_n)} $  is isomorphic to a localization of $k[t]$.

We consider the following two cases separately:
\begin{itemize}
\item  $ \lvert b_1' \rvert \leq \lvert b_2' \rvert  \leq1 $.
\item $ 1 \leq \lvert b_1' \rvert \leq \lvert b_2' \rvert  $.
\end{itemize}

 \begin{lem}
If $\lvert b_1' \rvert \leq \lvert b_2' \rvert  \leq1 $,
the affinoid open subvariety $\varphi^{-1}(U_n)$ satisfies  Condition \ref{conditionaffinoid} over a finite extension of $K$.
\end{lem}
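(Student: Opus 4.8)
The plan is to compute the canonical reduction $\overline{\varphi^{-1}(U_n)}$ explicitly from the equation $y'^p-y'=z+f$ and read off Condition \ref{conditionaffinoid}. First I would note that, since $\lvert b_2' \rvert \leq 1$, we have $\lvert z \rvert_{\sp} \leq \lvert b_2' \rvert \leq 1$ on $U_n$, and since $\lvert f \rvert_{\sp}<1$ this gives $\lvert z+f \rvert_{\sp} \leq 1$. As $w^p-w$ has absolute value $\lvert w \rvert^p$ whenever $\lvert w \rvert >1$, it follows that $\lvert y' \rvert_{\sp} \leq 1$ on $\varphi^{-1}(U_n)$. Embedding $\varphi^{-1}(U_n)$ into $\P^1 \times \P^1$ by $z$ and $y'$ then gives $\O(\varphi^{-1}(U_n)) \cong \O(U_n)[y']/(y'^p-y'-z-f)$, and, because $\O(U_n)^\circ$ is a $K^\circ$-model of $\O(U_n)$ and the defining polynomial is monic of degree $p$ in $y'$, the algebra $\O(U_n)^\circ[y']/(y'^p-y'-z-f)$ is a $K^\circ$-model of $\O(\varphi^{-1}(U_n))$.

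Reducing this model, and using $\overline{f}=0$, I would obtain
\[ \overline{\O(\varphi^{-1}(U_n))} \cong \overline{\O(U_n)}[\overline{y'}]/(\overline{y'}^p-\overline{y'}-\overline{z}). \]
The essential observation is that $\overline{z}$ is \emph{linear} in the reduction: writing $s$ for the image of $b_2'^{-1}z$ and $t$ for the image of $b_1'z^{-1}$, so that $\overline{\O(U_n)}$ is a localization of $k[s,t]/(st-\overline{b_1'b_2'^{-1}})$, and using $b_2'\in K^\circ$, one has $\overline{z}=\overline{b_2'}\,s$. Thus the reduction is the Artin--Schreier cover $\overline{y'}^p-\overline{y'}=\overline{b_2'}\,s$ of $\overline{U_n}$, which is \emph{finite \'etale of degree $p$} because the derivative of $\overline{y'}^p-\overline{y'}$ in $\overline{y'}$ is $-1$.

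It then remains to check that an \'etale cover of $\overline{U_n}$ of this shape satisfies Condition \ref{conditionaffinoid}. When $\lvert b_1' \rvert = \lvert b_2' \rvert$ the base $\overline{U_n}$ is smooth rational, and the cover---either $p$ disjoint copies of $\overline{U_n}$ (if $\overline{b_2'}=0$) or the rational Artin--Schreier curve $\overline{y'}^p-\overline{y'}=\overline{b_2'}s$, parametrized by $\overline{y'}$ (if $\overline{b_2'} \neq 0$)---has rational components and no singular points. When $\lvert b_1' \rvert < \lvert b_2' \rvert$, the base $\overline{U_n}$ is a localization of $k[s,t]/(st)$, i.e.\ two rational components meeting at the node $s=t=0$. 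On the $s$-line the restriction of $\overline{z}$ is the degree $\leq 1$ function $\overline{b_2'}s$, so its preimage is rational; on the $t$-line $\overline{z}$ vanishes, so its preimage is $p$ disjoint rational copies. Since $\overline{z}$ vanishes at the node, the fibre there is $\{\overline{y'} \in \F_p\}$, and \'etaleness makes each of the $p$ points above the node an ordinary double point. Hence all components of $\overline{\varphi^{-1}(U_n)}$ are rational and all its singular points are ordinary double points, so Condition \ref{conditionaffinoid} holds; a further finite extension of $K$ makes this reduction split.

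The main obstacle is the rationality of the components of the covering: an \'etale $\Z/p\Z$-cover of a rational curve in characteristic $p$ is rational only when its Artin--Schreier datum has low degree, so the crux is the identity $\overline{z}=\overline{b_2'}s$, which records that the coordinate change in Lemma \ref{z} has turned $z_m+C'$ into a function that is at most linear on each component of $\overline{U_n}$. Verifying this identity, together with the vanishing of $\overline{z}$ at the node---so that the node lifts to $p$ ordinary double points rather than ramifying---is the computational heart of the argument; the remaining points (flatness of the $K^\circ$-model and the fact that deleting the disks $D_\nu'$ only removes smooth points away from the node) are routine.
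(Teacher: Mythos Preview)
Your argument is correct and follows essentially the same strategy as the paper: both compute the canonical reduction via the $K^\circ$-model $\O(U_n)^\circ[y']/(y'^p-y'-z-f)$ and arrive at a localization of $k[s,t,y']/(st-\overline{b_1'b_2'^{-1}},\ y'^p-y'-\overline{b_2'}s)$, then verify Condition \ref{conditionaffinoid} by inspection. The only difference is the case split: the paper divides according to whether $\lvert b_2'\rvert<1$ or $\lvert b_2'\rvert=1$ and, in the latter case, eliminates $s$ to obtain $k[t,y']/(t(y'^p-y')-\overline{b_1'})$, whereas you divide according to whether the base $\overline{U_n}$ is smooth ($\lvert b_1'\rvert=\lvert b_2'\rvert$) or nodal ($\lvert b_1'\rvert<\lvert b_2'\rvert$) and invoke \'etaleness of the Artin--Schreier cover to see that the preimage of the node consists of ordinary double points. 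Both decompositions cover the same four subcases and the verifications are equivalent; your \'etaleness remark is a clean way to handle the node, while the paper's elimination makes the global structure of the reduction (as $\{t(y'^p-y')=\overline{b_1'}\}$) more visible.
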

\begin{proof}
Since $  \lvert b_2' \rvert  \leq1$, we have $\lvert z \rvert_{\sp} \leq 1$ on $U_n$. 
Similarly to the proof of Lemma \ref{lambdaempty},
 the residue ring $ \overline{ \O (\varphi^{-1} (U_n))} $ is isomorphic to a localization of 
$$ k[s, t, y']/(st - \overline{b_1'b_2'^{-1}}, y'^p - y' - \overline{b_2'}s)$$

If $\lvert b_2' \rvert <1 $, we have $\overline{b_2'} =0$,
   and  $\varphi^{-1}(U_n)$ satisfies  Condition \ref{conditionaffinoid}.
    
    If $\lvert b_2' \rvert  =1 $, we have
   $$ k[s, t, y']/(st - \overline{b_1'b_2'^{-1}}, y'^p - y' - \overline{b_2'}s)  \cong k[t, y']/( t(y'^p-y') - \overline{b_1'}),$$ 
  and  $\varphi^{-1}(U_n)$ satisfies  Condition \ref{conditionaffinoid}.
    \end{proof}

\begin{lem}
If $ 1 \leq \lvert b_1' \rvert \leq \lvert b_2' \rvert $,
the affinoid open subvariety $\varphi^{-1}(U_n)$ satisfies Condition \ref{conditionaffinoid} over a finite extension of $K$.
\end{lem}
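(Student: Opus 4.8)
The plan is to compute the canonical reduction $\overline{\varphi^{-1}(U_n)}$ explicitly, as in the previous lemma, but now both $z$ and $y'$ fail to be power-bounded on $U_n$, so I first rescale. Since $|f|_{\sp}<1\le|b_1'|\le|z|$ on $U_n$ we have $\lvert z+f\rvert_{\sp}=|z|_{\sp}=|b_2'|$, and from $y'^p-y'=z+f$ one checks that $|y'|_{\sp}=|b_2'|^{1/p}$, while the restriction of $|y'|$ to the inner circle $|z|=|b_1'|$ equals $|b_1'|^{1/p}$. Replacing $K$ by a finite extension, I may assume there are $c,c_1\in K^\times$ with $c^p=b_2'$ and $c_1^p=b_1'$, and I set $w:=y'/c$ and $w_1:=y'/c_1$, so that $|w|_{\sp}=1$ and $w=(c_1/c)\,w_1$ with $|c_1/c|=|b_1'/b_2'|^{1/p}$.

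Next I describe $\overline{U_n}$ and the cover over each component. As in the previous lemma, $\O(U_n)$ is a localization of the annulus algebra, so—after deleting the finitely many smooth points removed by the residue disks $D_\nu'$—the reduction $\overline{U_n}$ is a single rational component when $|b_1'|=|b_2'|$, and consists of two rational components $A$ (the outer circle $|z|=|b_2'|$) and $B$ (the inner circle $|z|=|b_1'|$) meeting at one node when $|b_1'|<|b_2'|$. Over $A$, dividing $c^pw^p-cw=z+f$ by $c^p$ gives $\overline{w}^{\,p}=\overline{z/b_2'}=s$, so the cover of $A$ is the purely inseparable degree-$p$ cover $\P^1_w\to\P^1$, which is rational. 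Over $B$, if $|b_1'|>1$ the same computation with $w_1$ yields a rational purely inseparable cover, whereas if $|b_1'|=1$ the function $y'$ is already power-bounded on the inner circle and reduces there to the Artin--Schreier cover $\overline{y'}^{\,p}-\overline{y'}=\overline{z}$ of $\P^1$, again rational. Hence all irreducible components of $\overline{\varphi^{-1}(U_n)}$ are rational curves.

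The main obstacle is that the single $K^\circ$-model $\O(U_n)^\circ[w]/\bigl(w^p-c^{1-p}w-c^{-p}(z+f)\bigr)$ does \emph{not} compute the canonical reduction: its reduction is a localization of $k[s,t,w]/(st-\overline{b_1'b_2'^{-1}},\,w^p-s)$, which for $|b_1'|<|b_2'|$ equals $k[t,w]/(t\,w^p)$ and is therefore non-reduced. To get the true canonical reduction I would split $U_n$ at an intermediate radius into two sub-affinoids forming a formal analytic covering and glue the reductions computed with $w$ over $A$ and with $w_1$ (or $y'$, when $|b_1'|=1$) over $B$, using compatibility of canonical reductions with formal coverings. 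The remaining point is to verify that the node is an ordinary double point: over it the cover is totally ramified on each side, the two branches are cut out by $\overline{w}=0$ on the $A$-side and by the ramification point on the $B$-side, and the relation $w=(c_1/c)\,w_1$ with $|c_1/c|<1$ reduces to $\overline{w}\cdot\overline{(1/w_1)}=0$, the local equation of a node (with $y'$ having a zero along one branch and a pole along the other in the mixed case $|b_1'|=1$). Once the node is identified as an ordinary double point, $\varphi^{-1}(U_n)$ satisfies Condition \ref{conditionaffinoid} over a finite extension of $K$.
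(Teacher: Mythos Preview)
You correctly pinpoint the obstacle: with only the single generator $w=y'/c$, the $K^\circ$-model has non-reduced special fibre $k[t,w]/(tw^p)$ when $\lvert b_1'\rvert<\lvert b_2'\rvert$, so it does not compute the canonical reduction. But your proposed fix does not close the gap. Splitting $U_n$ at an intermediate radius $\rho$ with $\lvert b_1'\rvert<\rho<\lvert b_2'\rvert$ does \emph{not} give a formal covering of the affinoid $U_n$ in the sense needed: the sub-annulus $\{\lvert b_1'\rvert\le\lvert z\rvert\le\rho\}$ has a canonical reduction with two components, only one of which embeds as an open subscheme of $\overline{U_n}$; the other maps to the node. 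So gluing these pieces does not reproduce $\overline{\varphi^{-1}(U_n)}$. If instead you mean to refine the formal covering of $\P^1$, then each new piece is again an annulus with strictly different boundary radii, and you face the identical problem on each piece; no finite subdivision resolves it. Your node argument is also not complete: you write $\overline{w}\cdot\overline{(1/w_1)}=0$, but $w_1=y'/c_1$ is not power-bounded on $\varphi^{-1}(U_n)$ (it has sup-norm $\lvert c/c_1\rvert>1$), so $\overline{w_1}$ is not a well-defined element of the residue ring to begin with.

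The missing idea is to adjoin, alongside $y''=y'/c$, the \emph{inverse} coordinate $w:=c_1/y'$ (not $y'/c_1$). Since $\lvert y'\rvert=\lvert z\rvert^{1/p}$ lies between $\lvert c_1\rvert$ and $\lvert c\rvert$ on $\varphi^{-1}(U_n)$, both $y''$ and $w$ are power-bounded, and they satisfy $y''w=c_1/c$. Rewriting $y'^p-y'=z+f$ once after dividing by $c^p$ and once after dividing by $z$ (using $\lvert f/z\rvert_{\sp}<1$) expresses $s=z/b_2'$ as a polynomial in $y''$ and $t=b_1'/z$ as a polynomial in $w$ modulo topologically nilpotent terms. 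Hence $\O(\varphi^{-1}(U_n))^\circ$ is (a localization of) $K^\circ$-generated by $y'',w$, and its reduction is a localization of $k[y'',w]/(y''w-\overline{c_1/c})$: a genuine ordinary double point when $\lvert b_1'\rvert<\lvert b_2'\rvert$ and smooth rational when $\lvert b_1'\rvert=\lvert b_2'\rvert$. This is exactly the paper's argument, with $\xi=1/c$, $\xi'=1/c_1$, and their $w$ equal to $c_1/y'$; the point is that the ``annulus in $y'$'' structure is what carries the node, and one must use both $y'$ and $1/y'$ as topological generators, just as one uses both $z$ and $z^{-1}$ for the base annulus.
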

\begin{proof} 
Since $ 1 \leq \lvert b_1' \rvert$, we have $ 1 \leq \lvert z \rvert$ on $U_n$.
By replacing $K$ by its finite extension, there exists $\xi,  \xi' \in K$ such that $ \xi ^p =  b_2'^{-1}$ and $ \xi'^p= b_1'^{-1}$. (Here, we put $b_2'^{-1}: = 0$ if $b_2' = \infty$.)
 We put $f':= b_2'^{-1} f$ and  $f'' := z^{-1}f$. 
 Since $1  \leq \lvert b_2' \rvert$, we have $\lvert f' \rvert_{\sp}<1$ on $U_n$.  Since $ 1 \leq \lvert z \rvert$ on $U_n$, we have $ \lvert f'' \rvert_{\sp} < 1$ on $U_n$.
 We also put  $y'':=\xi y' $ and $w:=  \xi'^{-1} y'^{-1} .$
    Then we have
     \begin{align*}
      y''^p -  \xi^{p-1} y''   & = b_2'^{-1} z+ f' \\
         b_1' z^{-1}
   (1- \xi'^{p-1} w^{p-1}) &  = w^p (1 + f'').
    \end{align*}

    First,  we assume $b_2' \neq \infty$.   
Similarly to the proof of Lemma \ref{lambdaempty}, 
the residue ring  $\overline{ \O (\varphi^{-1}(U_n))} $  is isomorphic to a localization of 
 $$k[s,t,y'',w]/(st- \overline{b_1' b_2'^{-1}}, y''w-\overline{\xi \xi'^{-1}},
  y''^p-\overline{\xi} ^{p-1}y''-s, t(1-\overline{\xi'}^{p-1} w^{p-1}) -w^p  ),$$ 
  which is a localization of $k[y'',w]/(y''w-\overline{\xi \xi'^{-1}}),$
  and  $\varphi^{-1}(U_n)$ satisfies  Condition \ref{conditionaffinoid}.

Next,    we assume $b_2' =\infty$.
    Similarly to the proof of Lemma \ref{lambdaempty}, the residue ring  $\overline{ \O (\varphi^{-1}(U_n))} $  is isomorphic to a localization of 
 $$k[t,w]/(t(1-\overline{\xi'}^{p-1} w^{p-1}) -w^p  ),$$ 
  which is a localization of $k[w]$,
  and  $\varphi^{-1}(U_n)$ satisfies  Condition \ref{conditionaffinoid}.
     \end{proof}

Consequently, $X$ is a Mumford curve over a finite extension of $K$.

\section{Proof of Theorem \ref{main} (part 2)}\label{only if}
In this section, we shall show that if $X$ is a Mumford curve, the inequality $\lvert \lambda_i \lambda_j \rvert < \lvert a_i-a_j \rvert^2$ is satisfied for any $i \neq j$.  
 Since the assertion is symmetric, we need only to prove  the inequality
$$\lvert \lambda_1 \lambda_2 \rvert < \lvert a_1-a_2 \rvert^2.$$
We use van Steen's method in \cite[Section 3]{Ste83} and the Bruhat-Tits tree $\T $ of $\PGL_2(K)$.

 Take $s_1,s_2,\dots, s_r \in \PGL_2(K)$ as in Section \ref{facts about cyclic coverings} of this paper. 
    By replacing $K$ by its finite extension, we may assume that all the fixed points of $N$ on $\Omega$ are $K$-rational points.
   
 Let $M  \subset  \T$ be 
    the subtree generated by $M(s_i) \ (1 \leq i \leq r)$.  
 For each $i \neq j $, since $M(s_i) \cap M(s_j) = \emptyset$, there exist unique vertices 
 $\xi_i(j) \in \vert (M(s_i))$ and $  \xi_j(i) \in \vert( M(s_j))$ satisfying 
     $$\dist (M(s_i), M(s_j))= \dist (\xi_i(j), \xi_j(i)).$$
   For each $i \neq j$, let  $e_i(j) \in  \edge (  \intervalcc{\xi_i(j)}{\xi_j(i)} )$ be  the edge such that  $\xi_i(j)$ is an extremity of $e_i(j)$.   
 

 \begin{lem}\label{graph}
 There exist $  s_i'  \in N \ (1 \leq i \leq r) $  satisfying the following conditions:
 \begin{itemize}
 \item For each $i$, the element $s_i' $ is $N$-conjugate to $s_i$.
 (This implies $s_i'$ is an element of order $p$ with $s_i'(y) = y +1 $.)
 \item $N$ is the free  product of $\langle s_i' \rangle \ (1 \leq i \leq r)$. 
 (This implies $\Gamma $ is generated by $ s_i'^n s_{i+1}'^{-n} \ (1\leq i \leq r-1, \  1\leq n \leq p-1)$.)
 \item We have $e \neq s_i'^n(e')$ for any  $ 1 \leq i \leq r$,  $ 0 \leq n \leq p-1$, and distinct edges $e , e' \in \edge (M')$, where $M' \subset \T$  is the subtree generated by $M(s_i') \ (1 \leq i \leq r)$.
   \end{itemize} 
 \end{lem}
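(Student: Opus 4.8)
The plan is to exploit the free product structure $N = \langle s_1\rangle * \cdots * \langle s_r\rangle$ together with the action of $N$ on the Bruhat--Tits tree $\T$, and to obtain condition 3 as a \emph{reducedness} property of a well-chosen free basis. The modifications of the generators that I allow are \emph{partial conjugations} $s_k \mapsto g\,s_k\,g^{-1}$ with $g \in N$ (in fact $g$ a power of another generator); geometrically such a move translates the mirror $M(s_k)$ to $g(M(s_k))$ while leaving the other mirrors unchanged. Since a partial conjugation sends the free factor $\langle s_k\rangle$ to the conjugate free factor $\langle g s_k g^{-1}\rangle$ and fixes the remaining factors, it is an automorphism of $N$ respecting the free product decomposition, a standard fact about automorphisms of free products. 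Consequently every tuple produced this way satisfies the first two bullet points automatically: each $s_i'$ is $N$-conjugate to $s_i$, hence of order $p$, and since $N/\Gamma \cong \Z/p\Z$ is abelian it still satisfies $s_i'(y) = y+1$; moreover $N$ remains the free product of the $\langle s_i'\rangle$. Thus the entire content of the lemma is to arrange the third bullet point by choosing the conjugates wisely.

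I would first reformulate condition 3 geometrically. As $s_i'$ is parabolic it fixes $M(s_i')$ pointwise, hence fixes every edge of $M(s_i')$, and by the properties recalled at the end of Section 2 it acts freely, in orbits of size $p$, on the edges of $\T$ emanating from a vertex of $M(s_i')$ but not lying in $M(s_i')$. Therefore an equality $e = s_i'^{n}(e')$ with $e \neq e'$ can occur only when $e$ and $e'$ are two edges of $M'$ sharing a common leaf $v$ of $M(s_i')$, both pointing out of $M(s_i')$ and lying in a single $s_i'$-orbit; as edges of $M'$ these are the initial edges of the bridges joining $M(s_i')$ to two of the remaining mirrors. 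Indeed, an edge in the interior of $M(s_i')$ is fixed by $s_i'$ and so cannot produce a violation through $s_i'$, while an edge further out along a bridge is carried by $s_i'$ into another branch hanging off $M(s_i')$, so the obstruction is governed by the initial bridge edges. Hence condition 3 is equivalent to the finite statement that, at each leaf of each mirror, the outgoing bridges of $M'$ lie in distinct $\langle s_i'\rangle$-orbits.

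To produce such a reduced basis I would argue by minimality. Among all tuples obtainable from $(s_1,\dots,s_r)$ by partial conjugations, choose one for which the finite subtree $B \subset M'$ spanned by the closest-approach vertices $\xi_i(j)$ --- equivalently, the union of the bridges, obtained from $M'$ by removing the interiors of the mirrors --- has the fewest edges. If this minimal tuple still violated condition 3, there would be bridges to $M(s_j')$ and $M(s_k')$ leaving a common leaf $v$ of $M(s_i')$ through edges $e'$ and $e = s_i'^{n}(e')$. Replacing $s_k'$ by $s_i'^{-n} s_k' s_i'^{n}$ moves $M(s_k')$ to $s_i'^{-n}(M(s_k'))$; since $s_i'$ fixes $v$, the new bridge from $M(s_i')$ leaves $v$ through $s_i'^{-n}(e) = e'$, so it now shares the edge $e'$ with the bridge to $M(s_j')$, and the two bridges merge along $e'$. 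The goal is to show that this merging strictly reduces the edge count of $B$, contradicting minimality and thereby forcing condition 3.

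I expect the monotonicity of the complexity to be the main obstacle. The isometry $s_i'^{-n}$ preserves the distances from $M(s_k')$ to $M(s_i')$ and to $M(s_j')$, but it may change the distances $\dist(M(s_k'),M(s_l'))$ to the other mirrors, so the pair at $v$ merges while some competing bridge potentially lengthens; one must verify that, after rerouting those bridges through the merged segment, the total number of bridge edges genuinely drops. I anticipate handling this by choosing the witness $(i,j,k)$ extremally --- taking $v$ to be an outermost branch vertex of $B$, so that $s_i'^{-n}$ only retracts the $k$-branch toward the remainder of $B$ and cannot increase the competing distances. An alternative, cleaner route that avoids the bookkeeping is to pass to the quotient graph of groups $N\backslash M'$, whose underlying graph is a tree precisely because $N$ is a free product, and to take for $(s_i')$ the vertex-group generators attached to a section of this tree inside $\T$; then distinct edges of $M'$ lie over distinct edges of the quotient, hence in distinct $N$-orbits, and condition 3 reduces to the existence of such a section, which is automatic for a tree.
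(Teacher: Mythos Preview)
Your overall strategy---modify the generators by partial conjugations inside $N$ and argue by descent on a combinatorial complexity---is precisely the paper's approach, and your geometric reformulation of condition~(3) (a violation forces two bridge-initial edges $e_m(j),\,e_m(k)$ at a common vertex $\xi_m(j)=\xi_m(k)$ of $M(s_m')$ to lie in one $\langle s_m'\rangle$-orbit) is correct and is exactly how the paper sets up the induction step.

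The gap is the monotonicity, and neither of your proposed fixes closes it. Conjugating a \emph{single} $s_k'$ can strictly increase distances: if some other mirror $M(s_l')$ lies on the same side of $M(s_m')$ as $M(s_k')$ did (i.e.\ $e_m(l)=e_m(k)$), then after replacing $s_k'$ by $s_m'^{-n}s_k's_m'^{\,n}$ the geodesic from the new mirror to $M(s_l')$ is forced through $v$, whereas before it need not have been; so $\dist$ can go up, and with it the edge count of your bridge tree $B$. Your ``extremal witness'' idea would require a collision at which one of the two colliding directions contains only a single mirror, and nothing guarantees this. The Bass--Serre alternative is misstated: $M'$ is not $N$-invariant, so $N\backslash M'$ is not defined; passing instead to the minimal $N$-invariant subtree and extracting a fundamental domain whose vertex-group generators are conjugates \emph{of the given} $s_i$ and still freely generate $N$ is essentially the content of the lemma, not a shortcut around it.

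The paper resolves the monotonicity by two adjustments. First, the complexity is $\sum_{i,j}\dist(M(s_i),M(s_j))$ rather than the edge count of $B$. Second---and this is the point you are missing---when $e_m(k)=s_m^{\,n}(e_m(l))$ the paper conjugates by $s_m^{\,n}$ \emph{every} generator in the cluster $I_l=\{\,j\neq m : e_m(j)=e_m(l)\,\}$, not just one. Then: distances inside $I_l$ are preserved (an isometry moves the whole cluster rigidly); distances from $I_l$ to $M(s_m)$ are preserved ($s_m^{\,n}$ fixes $M(s_m)$); distances from $I_l$ to mirrors with $e_m(j)\notin\{e_m(k),e_m(l)\}$ are preserved (those geodesics still pass through $\xi_m(l)=\xi_m(k)$ via a third edge); and every distance from $I_l$ to $I_k$ strictly drops, because the two bridges now share the edge $e_m(k)$. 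A short case analysis then gives a strict decrease of $\sum\dist$, and induction finishes the proof.
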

 \begin{proof}
 We prove Lemma \ref{graph} by induction on 
 $$\sum_{1 \leq i , j \leq r} \dist (M(s_i), M(s_j)).$$
 
 Since $M(s_i) \cap M(s_j) = \emptyset$ for $i \neq j$, we have $$\sum_{1 \leq i , j \leq r} \dist (M(s_i), M(s_j)) \geq r(r-1).$$

    We assume $e = s_m^n(e')$ for some distinct elements $e , e' \in \edge (M)$, $ 1 \leq m \leq r$, and $  1\leq n \leq p-1$.
  We fix $v_m \in \vert (M(s_m))$.
    There exists an extremity $v'$ of $e'$ with $v' \notin \vert( M(s_m))$. The vertex $s_m^n(v')$ is an extremity of $e$.
    There exist   $k,l \in \{ 1, \dots , r \} \setminus \{m \}$ satisfying
    $e_m(k) \in \edge (\intervalcc{v_m}{s_m^n(v')})$ and  $e_m(l) \in \edge (\intervalcc{v_m}{v'})$.
    We have $e_m(k) = s_m^n(e_m(l))$.
     In particular, we have $\xi_m(k) = \xi_m(l) $.
  
   For $i,j \in \{ 1, \dots , r \} \setminus \{m \}$ with $e_m(i)  \neq e_m(j)$,
   we have 
   $$\edge (\intervalcc{\xi_i(m)}{\xi_m(i)} \cap \intervalcc{\xi_m(j)}{\xi_j(m)}) = \emptyset,$$
   hence we have
    $$\intervalcc{\xi_i(m)}{\xi_j(m)} = \intervalcc{\xi_i(m)}{\xi_m(i)} \cup \intervalcc{\xi_m(i)}{\xi_m(j)} \cup \intervalcc{\xi_m(j)}{\xi_j(m)}.$$
  In particular,   we have $\intervalcc{\xi_i(m)}{\xi_j(m)}  \cap M(s_m) \neq \emptyset.$
     Hence, for $i,j \in \{ 1, \dots , r \} \setminus \{m \}$ with $e_m(i)  \neq e_m(j)$, we have $\xi_i(j)  = \xi_i(m) , $ $    \xi_j (i) = \xi_j(m) ,$ and
   \begin{align*}
\dist (M(s_i),M(s_j)) 
&  =\dist (\xi_i(j), \xi_j(i)) \\
& = \dist (\xi_i(m), \xi_m(i)) + \dist(\xi_m(i),  \xi_j(m)).
\end{align*}

 For $i \neq m$, we put 
 $$ I_i:= \{ \, 1 \leq j \leq r  \mid j \neq m  \text{ and } e_m(j) =e_m(i)\, \} . $$
 Then, for each $i \in I_k$ and $j \in I_l$, we have  
 $$e_m(i)= e_m(k) = s_m^n (e_m(l)) =s_m^n (e_m(j))  \in \edge( \intervalcc{\xi_m(j)}{s_m^n(\xi_j(m))} )$$
  and $\xi_m(i) = \xi_m(j) $.
 Hence we have $$e_m(i) \in  \edge( \intervalcc{\xi_i(m)}{\xi_m(i)} ) \cap \edge( \intervalcc{\xi_m(i)}{s_m^n(\xi_j(m))} ).$$
  
For each $i \in I_l$,  we put $s_i' := s_m^n s_i s_m^{-n}$. For each $i \not\in I_l$,  we put $s_i' :=  s_i $.
Then the discrete subgroup $N \subset \PGL_2(K)$ is the  free product of  $\langle s_i' \rangle \ (1 \leq i \leq r)$.
We have $M(s_i') = s_m^n M(s_i)$ for $i \in I_l$. 

We shall  show 
$$\sum_{1\leq i ,j \leq r}  \dist (M(s_i'), M(s_j')) <  \sum_{1 \leq i ,j \leq r} \dist (M(s_i), M(s_j)) .$$

To prove the above inequality, we estimate $ \dist (M(s_i'), M(s_j'))$ for each $i,j$.

\begin{itemize}
\item For $i \in I_k$ and  $j \in I_l$, we have $e_m(i) \neq e_m(j)$. 
  We have
\begin{align*}
\dist (M(s_i'),M(s_j')) & = \dist (M(s_i),s_m^nM(s_j)) \\
&   \leq \dist (\xi_i(m), s_m^n \xi_j(m)) \\
& <  \dist (\xi_i(m), \xi_m(i)) + \dist(\xi_m(i), s_m^n \xi_j(m)) \\
  & =  \dist (\xi_i(m), \xi_m(i)) + \dist(\xi_m(i),  \xi_j(m))  \\
  &= \dist (M(s_i),M(s_j)).
\end{align*}
\item For  $i ,j \in I_l$, we have 
\begin{align*}
\dist (M(s_i'),M(s_j'))=\dist (s_m^nM(s_i),s_m^nM(s_j)) = \dist (M(s_i),M(s_j)).
\end{align*}
\item For  $i=m$ and  $j \in I_l$, we have
\begin{align*}
\dist (M(s_m'),M(s_j')) =\dist (M(s_m),s_m^nM(s_j)) = \dist (M(s_m),M(s_j)).
\end{align*}
\item For  $i \not\in I_k \cup I_l \cup \{ m \}$ and  $j \in I_l$, since $e_m(i) \neq e_m(j)$ and $ e_m(i) \neq e_m(k)= s_m^n e_m(j) $,
 we have
\begin{align*}
\dist (M(s_i'),M(s_j')) & =\dist (M(s_i),s_m^nM(s_j)) \\
& = \dist (\xi_i(m), \xi_m(i)) + \dist(\xi_m(i), s_m^n \xi_j(m))\\
& = \dist (\xi_i(m), \xi_m(i)) + \dist(\xi_m(i),  \xi_j(m))\\
& =\dist (M(s_i),M(s_j)) .
\end{align*}
\item For  $i ,j \not\in I_l$, since $s_i'=s_i$ and $s_j' =s_j$,
we have
\begin{align*}
\dist (M(s_i'),M(s_j'))& =\dist (M(s_i),M(s_j)).
\end{align*}
\end{itemize}
Consequently, we have  $$\sum_{1\leq i ,j \leq r} \dist (M(s_i'), M(s_j')) <  \sum_{1 \leq i ,j \leq r} \dist (M(s_i), M(s_j)) .$$
By induction,  there exist  $s_i' \in N \ (1 \leq i \leq r)$ satisfying the conditions of Lemma \ref{graph}.
 \end{proof}
 
 We replace $s_i$ by $s_i'$ for every $1 \leq i \leq r$.
  Then we have $e \neq s_i^n(e')$ for any  $ 1 \leq i \leq r$, $  0 \leq n \leq p-1$, and any distinct elements $e,  e' \in M$. 
 
 Recall that we put $v_1:= v(0,\infty, 1)$, and $P_i \in \Omega $ is the fixed point of $s_i$ for $1\leq i \leq r$.
  By  replacing $K$ by its finite extension and changing the coordinate of $\Omega \subset \P^1$,
  we may assume that the following conditions are satisfied: 
 \begin{itemize}
\item  $P_1=0$ and $ P_2 \neq \infty $.
\item$ \lvert P_i \rvert < \lvert P_2\rvert  \text{ for any }i \ne 2$.
\item The element $s_1 \in \PGL_2 (K)$ is written as
 \begin{align*}
      s_1 & =  \begin{pmatrix} 1 & 0 \\ 1 & 1 \end{pmatrix} .
    \end{align*}
\end{itemize}
Then we have $ M( s_1)  \cap \intervaloo{0}{\infty}  =  \intervalco{v_1}{0}$.

Since $s_2 \in \PGL_2(K)$ is an element  of order $p$ fixing  $P_2 \in \P^1(K)\setminus \{0, \infty\} =K^\times$, it is  written as 
  $$ s_2 =  \begin{pmatrix} P_2(P_2-\eta) & \eta P_2^2 \\ -\eta & P_2(P_2+ \eta) \end{pmatrix} $$
for some $\eta \in K^\times$.
 
 \begin{lem}\label{v}
We have 
$$\val_K(\eta)  = - \dist (M(s_1), M(s_2))<0.$$
 In particular, we have $\lvert \eta \rvert >1$.
\end{lem}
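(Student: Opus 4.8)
The plan is to realise $\dist(M(s_1),M(s_2))$ as the length of the bridge joining the two mirrors along the geodesic $\intervaloo{0}{P_2}$, to identify its two endpoints as the branch vertices attached to the fixed points $0$ and $P_2$, and then to transport the whole picture onto the standard apartment $\intervaloo{0}{\infty}$ by a single M\"obius transformation, where the valuation formula for $v(0,\infty,\cdot)$ recalled in Section 2 finishes the computation.

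First I would locate the two gate vertices. Since $M(s_1)$ and $M(s_2)$ are disjoint subtrees containing the ends $0$ and $P_2$ respectively, the vertices $\xi_1(2)\in\vert(M(s_1))$ and $\xi_2(1)\in\vert(M(s_2))$ realising the distance both lie on $\intervaloo{0}{P_2}$, and they are exactly the vertices at which this geodesic leaves $M(s_1)$ and $M(s_2)$. The vertex set $\vert(M(s_1))$ is the fixed-point set of $s_1$, so I would show that $\xi_1(2)$ is the vertex where $\intervaloo{0}{P_2}$ separates from its image $\intervaloo{0}{s_1(P_2)}=s_1(\intervaloo{0}{P_2})$: indeed $s_1$ is a parabolic isometry fixing the end $0$, hence preserves the horospheres centred at $0$; consequently it fixes every vertex of $\intervaloo{0}{P_2}$ up to this separation vertex and moves every vertex strictly beyond it towards $P_2$, so that the separation vertex is precisely where $\intervaloo{0}{P_2}$ leaves $M(s_1)$. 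The same argument at the fixed end $P_2$ of $s_2$ shows that $\xi_2(1)$ is the vertex where $\intervaloo{P_2}{0}$ separates from $\intervaloo{P_2}{s_2(0)}$.

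Next I would read off $s_1(P_2)=P_2/(P_2+1)$ and $s_2(0)=\eta P_2/(\eta+P_2)$ from the two matrices, and apply the isometry $h(z)=P_2z/(P_2-z)$, which sends $0\mapsto0$ and $P_2\mapsto\infty$. A direct computation yields the clean cancellations $h(s_1(P_2))=1$ and $h(s_2(0))=\eta$. As an element of $\PGL_2(K)$, $h$ carries branch vertices to branch vertices, so $h(\xi_1(2))$ is the vertex where $\intervaloo{0}{\infty}$ separates from $\intervaloo{0}{1}$, namely $v(0,\infty,1)=v_1$, while $h(\xi_2(1))=v(0,\infty,\eta)$. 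Hence $\dist(M(s_1),M(s_2))=\dist(v_1,v(0,\infty,\eta))=\lvert\val_K(\eta)\rvert$ by the formula of Section 2.

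It remains to fix the sign. Disjointness of the mirrors forces $\xi_1(2)$ to lie strictly on the $0$-side of $\xi_2(1)$ along $\intervaloo{0}{P_2}$; applying $h$, the vertex $v(0,\infty,\eta)$ lies strictly on the $\infty$-side of $v_1$ on $\intervaloo{0}{\infty}$, which by the dichotomy recorded for $v(0,\infty,a)$ in Section 2 means $\val_K(\eta)<0$. Therefore $\lvert\val_K(\eta)\rvert=-\val_K(\eta)$ and $\val_K(\eta)=-\dist(M(s_1),M(s_2))<0$, as claimed. I expect the main obstacle to be the first step: the mirrors are genuinely ``fat'', since a parabolic element fixes whole horoballs of vertices rather than a mere half-line, so one must argue carefully that $\intervaloo{0}{P_2}$ exits each mirror precisely at the claimed branch vertex and that the nearest approach is realised along this geodesic. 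Once this localisation is secured through the horosphere-preserving property of parabolic isometries together with the mirror property of Section 2, the remainder reduces to the two identities $h(s_1(P_2))=1$ and $h(s_2(0))=\eta$ and the valuation formula.
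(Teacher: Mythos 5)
Your proposal is correct and follows essentially the same route as the paper: your map $h(z)=P_2z/(P_2-z)$ is precisely the conjugating element $\gamma=\begin{pmatrix} P_2 & 0 \\ -1 & P_2 \end{pmatrix}$ used in the paper, which transports $s_1,s_2$ to the standard unipotents $\begin{pmatrix} 1 & 0 \\ 1 & 1 \end{pmatrix}$ and $\begin{pmatrix} 1 & \eta \\ 0 & 1 \end{pmatrix}$, identifies the mirror traces on $\intervaloo{0}{\infty}$ as $\intervalco{v_1}{0}$ and $\intervalco{v(0,\infty,\eta)}{\infty}$, and concludes via disjointness of the mirrors. The only difference is presentational: you justify the location of the mirror traces via the horosphere-preserving property of parabolics, a point the paper simply asserts.
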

\begin{proof}
Let  
\begin{align*}
\gamma := \begin{pmatrix} P_2 & 0 \\ -1 & P_2 \end{pmatrix} .
\end{align*}
Then we have
 \begin{align*}
 &\gamma s_1 \gamma^{-1}=   \begin{pmatrix} 1 & 0 \\ 1 & 1 \end{pmatrix}   , \\
& \gamma s_2 \gamma^{-1}=  \begin{pmatrix} 1 & \eta \\ 0 & 1 \end{pmatrix}  
 \end{align*}
in $\PGL_2(K) $.
We have
$ M(\gamma s_1\gamma^{-1})  \cap \intervaloo{0}{\infty}  =  \intervalco{v_1}{0}$ and 
$ M(\gamma s_2\gamma^{-1})  \cap \intervaloo{0}{\infty}  =  \intervalco{v(0,\infty,\eta)}{\infty}$.
Since $M(s_1) \cap M(s_2) = \emptyset$ and $M(\gamma s_i \gamma^{-1})  = \gamma M(s_i) \ (i=1,2) $,
we have 
$ M(\gamma s_1 \gamma^{-1}) \cap M(\gamma s_2 \gamma^{-1}) = \emptyset.$
Hence we have
 $\lvert \eta \rvert >1$ and
 \begin{align*}
  \val_K (\eta)  =- \dist (M(\gamma s_1 \gamma^{-1}), M(\gamma s_2 \gamma^{-1}))
  = - \dist (M(s_1), M(s_2))<0.
  \end{align*}  
 \end{proof}
 
Since $\val_K(\eta)  = - \dist (M(s_1), M(s_2))$ is invariant under  $\PGL_2(K)$-conjugation, we may also assume $\lvert \eta \rvert < \lvert P_2 \rvert $.
 Since 
$ M(s_1)  \cap \intervaloo{0}{\infty}  =  \intervalco{v_1}{0},$
 we have $\xi_1(2) =v_1 $, $\xi_2(1) = v(0,\infty, \eta)$, and 
$$ \intervalcc{v(0,\infty, \pi P_2)}{v(0,\infty, P_2 )}  \subset M(s_2).$$

 \begin{lem}\label{xiij}
 For any $i \neq j$, we have 
$$v(0,\infty; \xi_i(j)) \in \vert(  \intervalco{v(0,\infty, \pi P_2)}{0} ).$$
 \end{lem}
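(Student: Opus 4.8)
The plan is to reformulate the claim as a statement about a single half-tree and then to exploit the disjointness of the mirrors $M(s_i)$. Write $w_0 := v(0,\infty,\pi P_2)$ and $w_0' := v(0,\infty,P_2)$; by the valuation computation recalled before Lemma~\ref{v} these are the vertices of $\intervaloo{0}{\infty}$ at valuation $\val_K(P_2)+1$ and $\val_K(P_2)$, so $\dist(w_0,w_0')=1$ and the single edge $e_0 := \intervalcc{w_0}{w_0'}$ is contained in $M(s_2)$, because $\intervalcc{v(0,\infty,\pi P_2)}{v(0,\infty,P_2)} \subset M(s_2)$. Let $T_0$ be the connected component of $w_0$ in the tree obtained from $\T$ by deleting the open edge $e_0$. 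Since $v(0,\infty;w)$ is by definition the vertex of $\intervaloo{0}{\infty}$ closest to $w$, and since $e_0$ is the unique edge separating $T_0$ from its complement, one checks directly that $v(0,\infty;w) \in \vert(\intervalco{w_0}{0})$ if and only if $w \in T_0$. Hence it suffices to prove $\xi_i(j) \in T_0$ for all $i \neq j$.

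First I would record the relevant valuations. Lemma~\ref{v} together with the normalization $\lvert \eta \rvert < \lvert P_2 \rvert$ gives $\val_K(P_2) < \val_K(\eta) < 0$, and the assumption $\lvert P_i \rvert < \lvert P_2 \rvert$ forces $\val_K(P_i) \geq \val_K(P_2)+1$ for every $i \neq 2$ (with $P_1 = 0$ treated as the end $0$). Consequently the ray of $M(s_i)$ toward its fixed point $P_i$ projects into $\intervalco{w_0}{0}$, so $M(s_i)$ has a vertex in $T_0$. Now fix $i \neq 2$. As $M(s_i)$ is a connected subtree with $M(s_i) \cap M(s_2) = \emptyset$ and $w_0 \in M(s_2)$, the subtree $M(s_i)$ cannot contain the separating edge $e_0$; therefore it lies entirely in $T_0$ or entirely in the complement of $T_0$, and since it meets $T_0$ we get $M(s_i) \subset T_0$. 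This already yields $\xi_i(j) \in M(s_i) \subset T_0$ for every pair with $i \neq 2$.

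It remains to treat the pairs with $i = 2$, where necessarily $j \neq 2$ and hence $M(s_j) \subset T_0$ by the previous step, while $w_0 \in M(s_2) \cap T_0$. Here I would argue by minimality: if $\xi_2(j)$ lay in the complement of $T_0$, then the geodesic $\intervalcc{\xi_2(j)}{\xi_j(2)}$, which realizes $\dist(M(s_2),M(s_j))$ and whose endpoint $\xi_j(2)$ lies in $T_0$, would have to cross $e_0$ and thus pass through $w_0 \in M(s_2)$; but then $w_0$ would be a vertex of $M(s_2)$ strictly closer to $M(s_j)$ than $\xi_2(j)$, contradicting the definition of $\xi_2(j)$. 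Hence $\xi_2(j) \in T_0$, completing the proof.

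I expect the genuine content to be concentrated in the reformulation of the first paragraph: identifying $T_0$ with the projection condition $v(0,\infty;w) \in \vert(\intervalco{w_0}{0})$ and recognizing $e_0$ as the unique separating edge, which happens to lie in $M(s_2)$. Once this is set up, the disjointness $M(s_i) \cap M(s_2) = \emptyset$ does almost all of the work through the connectedness dichotomy, and only the case $i = 2$ requires the additional gate/minimality argument. The one point demanding a little care is the uniformity over $i$: the argument for $i \neq 2$ uses $\lvert P_i \rvert < \lvert P_2 \rvert$ to place the fixed end on the $0$-side, so it is essential that $P_2$ is the unique fixed point of maximal absolute value, which is exactly the normalization imposed before the lemma.
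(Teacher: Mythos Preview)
Your proof is correct and follows essentially the same approach as the paper: both arguments hinge on the edge $\intervalcc{v(0,\infty,\pi P_2)}{v(0,\infty,P_2)}$ lying in $M(s_2)$, the disjointness $M(s_i)\cap M(s_2)=\emptyset$, and the inequality $\lvert P_i\rvert<\lvert P_2\rvert$ for $i\neq 2$. Your explicit half-tree $T_0$ and the gate/minimality argument for the case $i=2$ unpack what the paper's terse two-case proof leaves implicit, but the substance is the same.
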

 \begin{proof}
 For  $ i=2$ and $ j \neq 2$, since $\lvert P_j \rvert <  \lvert P_2 \rvert $ and
$$ \intervalcc{v(0,\infty, \pi P_2)}{v(0,\infty,  P_2 )}  \subset M(s_2),$$
we have 
$$v(0,\infty; \xi_2(j)) \in \vert(  \intervalco{v(0,\infty, \pi P_2)}{0} ).$$

For $i \neq 2$, 
since $\lvert P_i \rvert <  \lvert P_2 \rvert $ and $ v(0,\infty ,  P_2 )  \in \vert ( M(s_2)),$
 we have
$$v(0,\infty; w ) \in \vert(  \intervalco{v(0,\infty, \pi P_2)}{0} )$$
for  $w \in \vert (M(s_i))$.
In particular, for  $i \neq 2$ and $j \neq i$, we have
$$v(0,\infty; \xi_i(j)) \in \vert(  \intervalco{v(0,\infty, \pi P_2)}{0} ).$$
\end{proof}

  By replacing $K$ by its finite extension,
 there exists a $K$-rational point $u \in \Omega $ such that $ \lvert u \rvert  =  \lvert u- P_2\rvert = \lvert  P_2\rvert.$

  \begin{lem}\label{huti}
    The following are satisfied:
   \begin{enumerate}
     \item For $1 \leq n \leq p-1$,  we have $ \lvert s_2^n(P_1) \rvert =\lvert \eta \rvert $. 
     \item  For $1 \leq n \leq p-1$,  we have $ \lvert s_2^n(u) \rvert =\lvert P_2 \rvert $.
   \item  For $1\leq n \leq p-1$, we have $ \lvert s_1^n(u) \rvert =1 .$
    \item For  any $\gamma \in N$ and $i \neq 2$, we have  $\lvert \gamma(P_i) \rvert   < \lvert P_2 \rvert $. 
      \item For any  $\gamma \in N$,  we have $\lvert \gamma(u)- P_2 \rvert  = \lvert P_2 \rvert $.
     \item For any  $\gamma \in N$, we have $\lvert \gamma(P_1) \rvert  \leq \lvert \gamma(u) \rvert $.
    \end{enumerate}
    
    \end{lem}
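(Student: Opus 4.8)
The plan is to deduce all six assertions from a single computation of how each generator $s_i^n$ (for $1\le i\le r$, $1\le n\le p-1$) displaces a point, measured both by $\lvert\cdot\rvert$ and by $\lvert\cdot-P_2\rvert$. First I would record the explicit shapes of the generators. From the conjugation in Lemma \ref{v} one has $s_2^n=\gamma^{-1}\bigl(\begin{smallmatrix}1&n\eta\\0&1\end{smallmatrix}\bigr)\gamma=\bigl(\begin{smallmatrix}P_2(P_2-n\eta)&n\eta P_2^2\\-n\eta&P_2(n\eta+P_2)\end{smallmatrix}\bigr)$, while for $i\neq 2$ every parabolic element of order $p$ fixing $P_i$ is of the form $s_i=\bigl(\begin{smallmatrix}1+c_iP_i&-c_iP_i^2\\c_i&1-c_iP_i\end{smallmatrix}\bigr)$ for some $c_i\in K^\times$, so that $s_i^n(x)-P_i=(x-P_i)/\bigl(nc_i(x-P_i)+1\bigr)$ (with $c_1=1$, $P_1=0$, matching the normalized $s_1$). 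I set $\rho_i:=\lvert c_i\rvert^{-1}$.

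A short ultrametric computation, using $\lvert n\rvert=1$ and $1<\lvert\eta\rvert<\lvert P_2\rvert$, then gives the key estimates. For $s_2^n$ and $\lvert x\rvert\le\lvert P_2\rvert$ one finds $s_2^n(x)-P_2=P_2^2(x-P_2)/D$ with $\lvert D\rvert=\lvert P_2\rvert^2$, hence $\lvert s_2^n(x)-P_2\rvert=\lvert x-P_2\rvert$, and $\lvert s_2^n(x)\rvert=\max(\lvert x\rvert,\lvert\eta\rvert)$ when $\lvert x\rvert<\lvert P_2\rvert$ while $\lvert s_2^n(x)\rvert=\lvert P_2\rvert$ when $\lvert x\rvert=\lvert P_2\rvert$. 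For $i\neq 2$ and $x$ off the critical circle $\lvert x-P_i\rvert=\rho_i$, one gets $\lvert s_i^n(x)-P_i\rvert=\min(\lvert x-P_i\rvert,\rho_i)$. Assertions (1), (2), (3) are exactly the special cases $x=0$ under $s_2^n$, $x=u$ under $s_2^n$, and $x=u$ under $s_1^n$: in the last one $\rho_1=1$ and $\lvert u-P_1\rvert=\lvert u\rvert=\lvert P_2\rvert>1$, so $u$ is sent to the critical circle $\lvert\,\cdot\,\rvert=1$.

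For (4) and (5) I would show that the subsets $A:=\{\,x\mid\lvert x\rvert<\lvert P_2\rvert\,\}$ and $B:=\{\,x\mid\lvert x-P_2\rvert=\lvert P_2\rvert\,\}$ are each invariant under every $s_i^n$, hence under all of $N$; since $P_i\in A$ for $i\neq 2$ and $u\in B$, this yields (4) and (5) with no further induction. Invariance under $s_2^n$ is immediate from the estimates above. For $i\neq 2$, the estimates give $\lvert s_i^n(x)-P_i\rvert<\lvert P_2\rvert$ whenever $\lvert x-P_i\rvert\le\lvert P_2\rvert$, from which invariance of both $A$ and $B$ follows (using $\lvert P_i-P_2\rvert=\lvert P_2\rvert$), the boundary circle being absorbed by the bijectivity of $s_i^n$. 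The one geometric input is $\rho_i<\lvert P_2\rvert$, which I would extract from Lemma \ref{xiij}: the mirror $M(s_i)$ lies over vertices of $\intervalco{v(0,\infty,\pi P_2)}{0}$, forcing the critical circle of $s_i$ to have radius strictly below $\lvert P_2\rvert$.

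The real work is (6), which I would prove by strengthening it to
$(\star)$: $\ \lvert\gamma(P_1)-\gamma(u)\rvert\le\lvert\gamma(u)\rvert$ for all $\gamma\in N$; then (6) is immediate, since $\lvert\gamma(P_1)\rvert\le\max\bigl(\lvert\gamma(u)\rvert,\lvert\gamma(P_1)-\gamma(u)\rvert\bigr)\le\lvert\gamma(u)\rvert$. To prove $(\star)$ I would induct on the word length of $\gamma$ in the free product $N=\ast_i\langle s_i\rangle$, writing $\gamma=s_i^n\delta$ with $\delta$ shorter. The difference formula $s(X)-s(Y)=(X-Y)/\bigl((cX+d)(cY+d)\bigr)$ for $s=\bigl(\begin{smallmatrix}a&b\\c&d\end{smallmatrix}\bigr)$ of determinant $1$, together with $s_i^n(x)-P_i=(x-P_i)/(cx+d)$, expresses $\lvert\gamma(P_1)-\gamma(u)\rvert$ in terms of $\lvert\delta(P_1)-\delta(u)\rvert$ and the two denominators; feeding in the inductive bound $\lvert\delta(P_1)-\delta(u)\rvert\le\lvert\delta(u)\rvert$ and the invariants $\lvert\delta(P_1)\rvert<\lvert P_2\rvert$, $\lvert\delta(u)-P_2\rvert=\lvert P_2\rvert$ from (4), (5) should close the induction after a split on $i=2$ versus $i\neq 2$ and on the position of $\delta(P_1),\delta(u)$ relative to the critical circle of $s_i$. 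The main obstacle is precisely this last step: a naive comparison through $\lvert x-P_i\rvert$ breaks down when one of the two points lies very close to $P_i$, and it is exactly the reinforced invariant $(\star)$, rather than (6) itself, that makes the recursion self-sustaining, so the case analysis must be organized around $(\star)$.
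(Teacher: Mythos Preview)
Your direct matrix computations for (1)--(3) are correct and reach the same conclusions as the paper, which phrases these through the tree (via $v(0,\infty,\cdot)$) rather than through explicit fractional linear formulas.

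The genuine gap is in (4) and (5). The sets $A=\{\lvert x\rvert<\lvert P_2\rvert\}$ and $B=\{\lvert x-P_2\rvert=\lvert P_2\rvert\}$ are \emph{not} invariant under $s_i^n$ for $i\neq 2$. Your formula $\lvert s_i^n(x)-P_i\rvert=\min(\lvert x-P_i\rvert,\rho_i)$ holds only off the critical circle $\lvert x-P_i\rvert=\rho_i$; on that circle $s_i^n$ can throw points arbitrarily far, and in particular the pole $P_i-(nc_i)^{-1}$ lies on it, satisfies $\lvert P_i-(nc_i)^{-1}\rvert<\lvert P_2\rvert$, and is sent to $\infty$. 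More structurally, your own estimate shows that $s_i^n$ maps all of $\{\lvert x-P_i\rvert>\rho_i\}\supset A^c$ onto the critical circle $\subset A$; by bijectivity it must therefore send part of the critical circle---hence part of $A$---onto $A^c$. So the ``absorbed by bijectivity'' remark proves the opposite of what you need. The same obstruction breaks the invariance of $B$, and your inductive scheme for (6) via $(\star)$ runs into it as well: the recursion cannot be closed once an orbit point lands on a critical circle, and nothing in your argument prevents that.

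What the paper does instead is track a finer invariant than $\lvert Q\rvert$: for each $Q$ it records the nearest vertex $\xi(Q)$ on the subtree $M\subset\T$ generated by the mirrors, together with the partition $\{A_i\}$ of points according to which $M(s_i)$ that vertex lies on. The decisive input---which you never invoke---is Lemma~\ref{graph}: after the replacement of generators carried out there, $s_i^n$ sends the edge $e_i(j)$ outside $M$, so for $Q\in A_j$ with $j\neq i$ one gets $\xi(s_i^n(Q))=\xi_i(j)$ exactly. This pins the image to one specific residue disk (rather than an uncontrolled point on the critical circle), and then Lemma~\ref{xiij} converts $\xi_i(j)$ into the bound $\lvert s_i^n(Q)\rvert<\lvert P_2\rvert$. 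You cite Lemma~\ref{xiij} only to extract $\rho_i<\lvert P_2\rvert$, but that by itself is not enough; the induction needs the edge condition of Lemma~\ref{graph} to keep the orbit away from the bad residue classes on the critical circles.
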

\begin{proof}

Since  the path $ \intervalcc{v(0,\infty, \pi P_2)}{v(0,\infty, P_2 )} $ is contained in $ M(s_2)$, 
every edge $e \in \edge (\T)$ such that $v(0,\infty, P_2)$ is an extremity of $e$ is an edge of $M(s_2)$.
 For any $Q \in K^\times $ with $\lvert Q \rvert =\lvert P_2 \rvert $ (i.e., $v(0,\infty, Q) = v(0,\infty, P_2)$),
  we have 
 $$\edge(   \intervalco{v(0,\infty, Q)}{Q}  \cap M(s_2)) \neq \emptyset .$$
 Hence,  for  $1 \leq n \leq p-1$,  we have $v(0,\infty, s_2^n(Q))=v(0,\infty, Q)$, i.e.,  $ \lvert s_2^n(Q) \rvert =  \lvert  P_2\rvert $.
 In particular, we have 
 $$ \lvert s_2^n(u) \rvert =  \lvert  P_2\rvert  \text{ and }  \lvert s_2^n(u)- P_2 \rvert =  \lvert  P_2\rvert  .$$
 The equality (2) is satisfied.

  For any $ Q \in K \setminus \lbrace P_1, \dots , P_r \rbrace $, 
     the intersection
     $$   M \cap   \bigcap_{w \in M} \intervalco{w}{Q} $$
    consists of one vertex only, and we denote it  by $\xi(Q)$.
    Since  the half-line 
    $ \intervalco{v(0, \infty , P_2)}{0} $ is contained in $  M,$
    if   
     $$ v(0,\infty ;\xi( Q))  \in \vert(  \intervalco{v(0, \infty , \pi  P_2)}{0}) ,$$ 
   we have $$ v(0,\infty ;\xi( Q)) = v(0, \infty , Q ) \in \vert(  \intervalco{v(0, \infty , \pi P_2)}{0}) ,$$
   hence $\lvert Q \rvert \leq \lvert \pi P_2 \rvert < \lvert P_2 \rvert$.
   In particular, by Lemma \ref{xiij},  for $ Q \in K \setminus \lbrace P_1, \dots , P_r \rbrace $ with $\xi( Q)=\xi_i(j)$ for some distinct elements $i ,j$, we have $\lvert Q \rvert  < \lvert P_2 \rvert$.

       For each $i$,  we put 
   $$A_i : = \lbrace \, Q \in K \setminus \lbrace P_1, \dots , P_r \rbrace  \mid \xi (Q) \in \vert (M(s_i)) \, \rbrace \cup \lbrace P_i \rbrace.$$
  We have $s_2^n (u)  \in A_2 $ since  $ \lvert s_2^n (u) \rvert  =\lvert P_2 \rvert \ (0\leq n \leq p-1) $, $ \lvert P_i \rvert <\lvert P_2 \rvert $ for $i \neq 2$, and $ v(0,\infty ,  P_2 )  \in \vert ( M(s_2))$.

     For $i \neq j $, $ Q \in A_j $, and $  1\leq n \leq p-1 $ , we have $e_i(j) \in \edge(\intervalco{\xi_i(j)}{Q})) $ and
       $  s_i^n(e_i(j)) \not\in  \edge( M)$ by Lemma \ref{graph}.   
    Hence  $M \cap  \intervalco{\xi_i(j)}{s_i^n(Q)})$ consists of $\xi_i(j)$ only. 
    Hence we have $ \xi( s_i^n(Q) ) = \xi_i( j ) \in \vert (M(i)) $.
   In particular,  we have $ s_i^n(Q) \in A_i$.
   
   Since $u \in A_2$ and $\xi_1(2) =v_1$, we have $ \lvert s_1^n (u)  \rvert =1 \ (1 \leq n \leq p-1)$. The equality (3) is satisfied.
   
   Since $P_1 \in A_1$ and $\xi_2(1) =v(0,\infty,\eta)$, we have $ \lvert s_2^n (P_1)  \rvert =\lvert \eta \rvert \ (1 \leq n \leq p-1)$. 
   The equality (1) is satisfied.      
   
      For  an element
  $$ \gamma =s_{i_1}^{n_1} \cdot \cdot \cdot  s_{i_m}^{n_m} \in N  \ (  m \geq 2   , \  1 \leq n_l \leq p-1 \  (1\leq l \leq m) , \  i_l \neq i_{l+1} \  (1\leq l \leq m-1) ),$$
  by the above computations, we have  $ \xi(\gamma (P_i))=  \xi_{i_1}(i_2)$ and $  \xi(\gamma(u))= \xi_{i_1}(i_2)$.
Hence we have $ \lvert  \gamma (P_i) \rvert  <\lvert P_2 \rvert $ for $i \neq 2$,  $ \lvert  \gamma (P_1) \rvert  =\lvert \gamma (u) \rvert $, and $ \lvert  \gamma (u) \rvert <\lvert P_2 \rvert $. In particular, we have $ \lvert  \gamma (u) - P_2 \rvert = \lvert P_2 \rvert $.
Hence (4), (5), and (6)  are satisfied for this $\gamma$.

   For $i \neq 2$, $j \neq i$, and $1 \leq n \leq p-1$, we have $\xi(s_j^n (P_i)) = \xi_{j}(i)$.
    Hence we have $\lvert  s_j^n (P_i) \rvert < \lvert P_2 \rvert$.
    For $i \neq 2$, we also have $\lvert  s_i^n (P_i) \rvert =\lvert P_i \rvert < \lvert P_2 \rvert $ for $0 \leq n \leq p-1$.
    Consequently, the inequality (4) is satisfied for any $\gamma \in N$.
    
    For $j \neq 2$ and $1 \leq n \leq p-1$, we have $\xi(s_j^n (u)) = \xi_{j}(2)$.
    Hence we have $\lvert  s_j^n (u) \rvert < \lvert P_2 \rvert$.
    We also showed that $\lvert s_2^n(u)- P_2 \rvert =  \lvert  P_2\rvert $ for $0\leq n\leq p-1$.
    Consequently, the equality (5) is satisfied for any $\gamma \in N$.

    For $ i \neq 1,2 $,  since  $ \lvert P_1 \rvert  < \lvert  P_2 \rvert $,
        we have 
    $$v(0,\infty; \xi_{i}(1) )  \in \vert ( \intervalco{v(0, \infty ; \xi_{i}(2) )}{0}).$$
   Hence we have $ \lvert s_i^n (P_1) \rvert  \leq \lvert  s_i^n (u) \rvert $.
   Since $s_1(P_1)=P_1=0  $, we have $ \lvert s_1^n(P_1) \rvert <  \lvert s_1^n(u) \rvert \ (0\leq n \leq p-1)$.
 By  (1) and  (2),  we have $ \lvert s_2^n(P_1) \rvert = \lvert \eta \rvert <  \lvert P_2 \rvert  =\lvert s_2^n(u) \rvert \ (1\leq n \leq p-1)$.
      Consequently, the inequality (6) is satisfied for any $\gamma \in N$.     
         \end{proof}
  %

     \begin{center}
    
   \includegraphics[width=12cm]{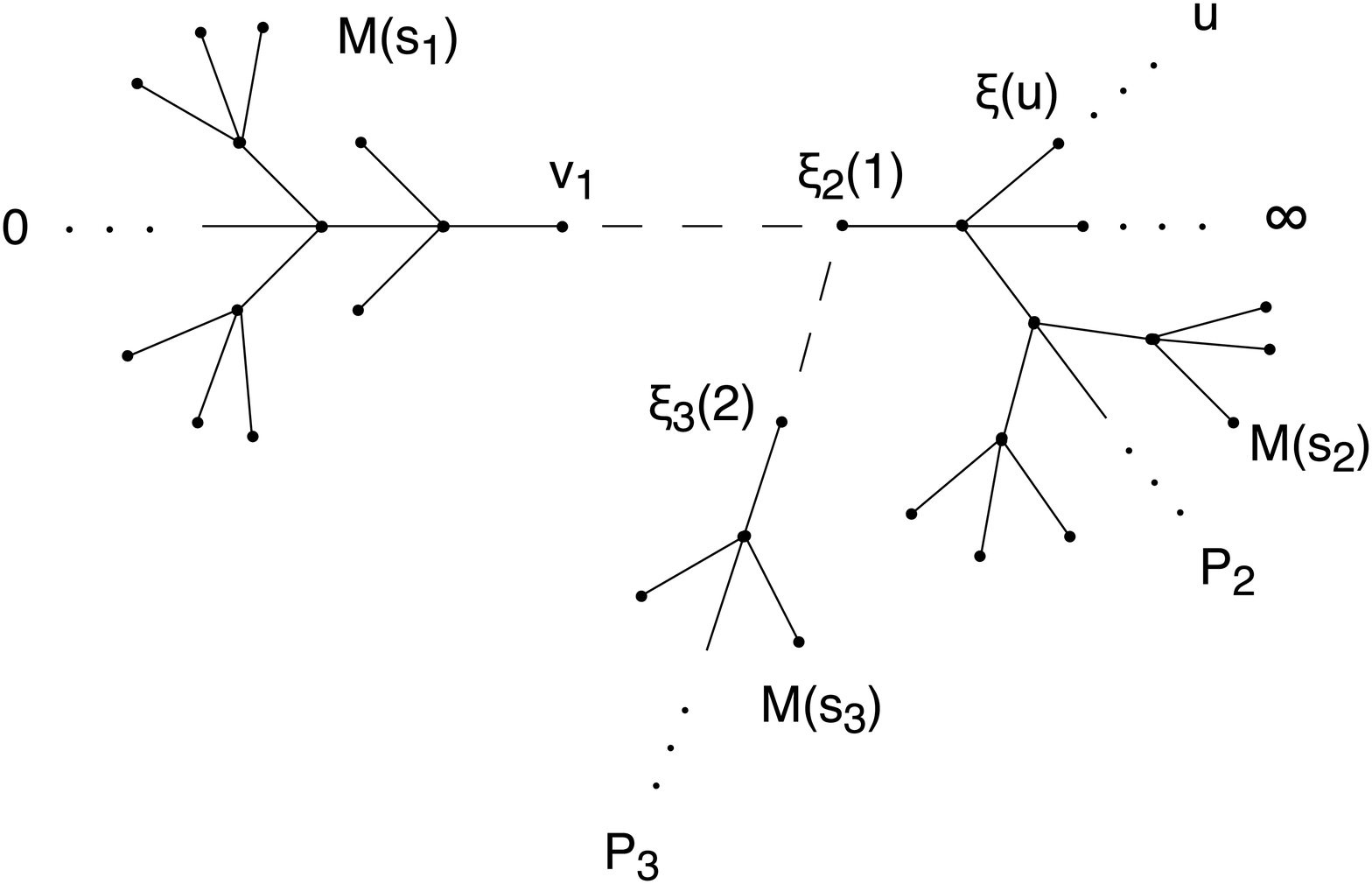}
   
    \textsc{Figure}.\ The subtree $M \subset \T$ generated by $M(s_i) \ (1 \leq i \leq r) $
    \small
    \begin{itemize}
   \item Edges of $M(s_i)$ are denoted  by solid line segments.
   \item Edges of $M \setminus \bigcup_i M(s_i)$ are denoted by dashed line segments.
   \item Half-lines are denoted by  dots.
    \end{itemize} 
     \end{center}
     
           Recall that the function field of $\P^1$ (resp.\ $X$) is denoted by $K(x)$ (resp.\ $F=K(x,y)$).
 We treat $x$ as not only  a function on $X$ and $\P^1$  but also  an $N$-invariant  function on $\Omega$ 
 via the natural projection  $\Omega \to \Omega / N \cong  \P^1$. 
 Similarly, we treat $y$ as not only  a function on $X$  but also  a $\Gamma$-invariant  function on $\Omega$  via the natural projection  $\Omega \to \Omega / \Gamma \cong  \P^1$. 
  
  We also recall that for $1\leq i \leq r$, the image of the fixed point $P_i \in \Omega$ of $s_i$ under the natural projection   $\Omega \to \Omega / N \cong  \P^1$ is the branch point  $a_i \in \P^1$.

For any $\gamma \in N $ and  $ i \neq 2 $, by Lemma \ref{huti} (4), we have $  \lvert \gamma (P_i)\rvert  < \lvert P_2\rvert =  \lvert u \rvert $.   Hence we have  $ \gamma (P_i) \neq u $. We have $x(u) \neq a_i$ for $i \neq 2$.
 By Lemma \ref{huti} (5),  we have $ \gamma (P_2) \neq u$ for any $\gamma \in N $. Hence we have $x (u) \neq a_2$.

There exists $\gamma \in \PGL_2(K)$ such that $\gamma( a_1)=0,\ \gamma( a_2)=1, $ and $ \gamma( x(u))= \infty$. The inverse $\gamma^{-1} $ is written as 
 $$  \gamma^{-1} =  \begin{pmatrix} b & c \\ d & e \end{pmatrix}  \in  \PGL_2(K)$$
for some $ b,c,d,e \in K$
 satisfying $b-a_id \neq 0 \ (1\leq i \leq r).$
 For each $i$, we have 
 \begin{align*}
 \frac{\lambda_i} {x - a_i}
&= \frac{\lambda_i} {\gamma^{-1}(\gamma(x)) - a_i}  \\
 &= \frac{\lambda_i d} {b - a_i d} + \frac{\lambda_i (be-cd)(b-a_id)^{-2}} { \gamma(x) +(c-a_ie)(b-a_id)^{-1}}.
 \end{align*}
By replacing $K$ by its finite extension, there exists $C \in K$ satisfying
$$C^p-C = \sum_{i=1}^r  \frac{\lambda_i d} {b - a_i d} .$$
We have $$(y-C)^p -(y-C) = \sum_{i=1}^r \frac{\lambda_i (be-cd)(b-a_id)^{-2}} { \gamma(x) +(c-a_ie)(b-a_id)^{-1}}.$$
 We also have 
 \begin{align*}
 \frac{c-a_1 e} {b - a_1 d} - \frac{c-a_2 e} {b - a_2 d}
=  \frac{(a_2-a_1 )(be-cd)} {(b - a_1 d)(b-a_2 d)}.
 \end{align*}
Therefore,  the inequality $\lvert \lambda_1 \lambda_2 \rvert < \lvert a_1-a_2 \rvert^2$ is satisfied  if and only if 
 \begin{align*}
\bigg\lvert \frac{\lambda_1 (be-cd)} {(b-a_1d)^2} \frac{\lambda_2 (be-cd)} {(b-a_2d)^2}  \bigg\rvert
 & < \bigg\lvert \frac{(a_2-a_1 )(be-cd)} {(b - a_1 d)(b-a_2 d)} \bigg\rvert^2\\
& = \bigg\lvert \frac{c-a_1 e} {b - a_1 d} - \frac{c-a_2 e} {b - a_2 d} \bigg\rvert^2
  \end{align*}
  is satisfied.
In the rest of this section, by replacing $x$ (resp.\ $y$) by $\gamma(x)$ (resp.\ $y-C$), we may assume 
$ a_1=0,\ a_2 =1$, and $  x(u)= \infty$.
 
 We put 
 $$\alpha :=\prod_{\gamma \in N}  \frac{P_2-\gamma(u)}{P_2-\gamma(P_1)} ,$$
 which converges to an element of $K$; see \cite[Section 8.1]{GerritzenvanderPut:SchottkyMumford}.
  We have $ \lvert  \alpha \rvert =  1$ by Lemma \ref{huti} (4), (5).
  Let $z$ be a coordinate function on $\Omega \subset \P^1$.
   We have 
    $$x(z) = \alpha \prod_{\gamma \in N} \frac{z-\gamma(P_1)}{z-\gamma(u)} $$
  since the both hand sides are $N$-invariant functions on $\Omega$ (i.e., functions on $\P^1 \cong \Omega / N$) having same zeros and poles and being $1 $ at $z=P_2$; see \cite[Section 8.1]{GerritzenvanderPut:SchottkyMumford}.

  We put $$V_{i, \varepsilon} := \lbrace \, z \in \Omega \mid \lvert z-P_i \rvert \leq \varepsilon \, \rbrace $$
  for $i=1,2 $ and $\varepsilon  \in \lvert K^\times \rvert $.
  Since $P_i \in \Omega$ is not a limit point of $N$, by  replacing $K$ by its finite extension and taking  $\varepsilon$ sufficiently small, we may assume $\varepsilon < \lvert P_i - \gamma(u) \rvert$ for any $\gamma \in N$. 
  
   We denote  the power series expansion of $x$ on $V_{i,\varepsilon}$ by
   $$x(z)= \alpha \sum_{n=0}^{\infty} c_{i,n}(z-P_i)^n.$$
 Since $x(P_i)=a_i$, we have    $$x(z) -a_i = \alpha \sum_{n=1}^{\infty} c_{i,n}(z-P_i)^n.$$
   \begin{lem}\label{lambda=}
   We have $ \lambda_1 =   \alpha  c_{1,p}  $ and $ \lambda_2 = (-P_2^2 \eta^{-1})^p   \alpha  c_{2,p}  $.
   \end{lem}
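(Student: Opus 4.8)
The plan is to compare the principal parts of the two sides of the defining equation $y^p - y = \sum_{i=1}^{r} \lambda_i/(x - a_i)$, regarded as meromorphic functions on $\Omega$, in a small disk around $P_1$ (resp.\ $P_2$). First I would record the local shape of $x$ at $P_i$. Since $s_i \in N$ has order $p$ and fixes $P_i$, the covering $\Omega \to \Omega/N \cong \P^1$ is ramified of index $p$ at $P_i$; hence $x - a_i$ has a zero of order exactly $p$ at $P_i$. In terms of the expansion $x(z) - a_i = \alpha \sum_{n \geq 1} c_{i,n}(z - P_i)^n$ this means $c_{i,1} = \dots = c_{i,p-1} = 0$, so the leading term is $\alpha c_{i,p}(z - P_i)^p$. (For $P_1 = 0$ this is also transparent from $x(z) = \alpha \prod_{\gamma \in N}(z - \gamma(P_1))/(z - \gamma(u))$, where exactly the $p$ factors indexed by $\gamma \in \langle s_1 \rangle$ vanish at $P_1$.) Consequently $\lambda_i/(x - a_i)$ has principal part $\frac{\lambda_i}{\alpha c_{i,p}}(z - P_i)^{-p} + O((z - P_i)^{-p+1})$, whereas each $\lambda_j/(x - a_j)$ with $j \neq i$ is holomorphic near $P_i$ because $x \to a_i \neq a_j$.

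Next I would analyze $y$ near $P_i$. Since $y$ is a rational function on $X$ pulled back to $\Omega$, its poles form a discrete set, so $P_i$ is isolated among them and $y$ has a Laurent expansion there. Comparing pole orders in the equation shows the pole is simple: if $y$ had a pole of order $m \geq 1$ then $y^p - y$ would have order $pm$, and matching the order $p$ of the right-hand side forces $m = 1$ (and $y$ cannot be holomorphic). Writing $y = A_i(z - P_i)^{-1} + g(z)$ with $g$ holomorphic at $P_i$ and using the Frobenius identity $(a+b)^p = a^p + b^p$ in characteristic $p$, one gets $y^p = A_i^p(z - P_i)^{-p} + (\text{holomorphic})$, so the coefficient of $(z - P_i)^{-p}$ in $y^p - y$ is exactly $A_i^p$ (the $-y$ term affects only the simple pole). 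Matching coefficients yields $A_i^p = \lambda_i/(\alpha c_{i,p})$, i.e.\ $\lambda_i = \alpha c_{i,p} A_i^p$.

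It then remains to compute the leading coefficients $A_1$ and $A_2$ from the normalization $s_i(y) = y + 1$, and this is where the real work lies. The idea is to produce near $P_i$ a M\"obius ``translation coordinate'' that transforms the same way as $y$, so that their difference is $s_i$-invariant. For $s_1 = \left(\begin{smallmatrix} 1 & 0 \\ 1 & 1 \end{smallmatrix}\right)$ with $P_1 = 0$ one has $1/s_1(z) = 1/z + 1$, so $y - 1/z$ is $s_1$-invariant; writing $y = A_1/z + g$, the invariance reads $g(s_1(z)) - g(z) = 1 - A_1$, and evaluating at the fixed point $z = 0$ (where $g$ is holomorphic) gives $A_1 = 1$, whence $\lambda_1 = \alpha c_{1,p}$. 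For $s_2$ I would use the conjugator $\gamma = \left(\begin{smallmatrix} P_2 & 0 \\ -1 & P_2 \end{smallmatrix}\right)$ from the proof of Lemma \ref{v}, which satisfies $\gamma s_2 \gamma^{-1} = \left(\begin{smallmatrix} 1 & \eta \\ 0 & 1 \end{smallmatrix}\right)$; thus $W := \gamma(z) = P_2 z/(P_2 - z)$ obeys $W \circ s_2 = W + \eta$, so $y - \eta^{-1} W$ is $s_2$-invariant. Setting $\zeta := z - P_2$ and expanding $\eta^{-1} W = -\frac{P_2^2}{\eta}\zeta^{-1} - \frac{P_2}{\eta}$, together with the relation $1/(\zeta \circ s_2) = 1/\zeta - \eta/P_2^2$, the same fixed-point evaluation forces the simple pole of $y - \eta^{-1} W$ to vanish, i.e.\ $A_2 = -P_2^2/\eta$, giving $\lambda_2 = (-P_2^2 \eta^{-1})^p \alpha c_{2,p}$.

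The main obstacle is precisely this last step: pinning down the \emph{exact} constant $A_i$, not merely $A_i^p$. Matching principal parts only relates $\lambda_i$ to $A_i^p$, so $A_i$ must be determined independently; the normalization $s_i(y) = y + 1$ is exactly what supplies it, and exploiting it requires identifying the correct local translation coordinate for the parabolic element $s_i$ and tracking its leading Laurent coefficient at the fixed point $P_i$. For $s_2$ this additionally relies on the explicit matrix form and the conjugation computation of Lemma \ref{v}.
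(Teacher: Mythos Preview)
Your argument is correct and follows essentially the same route as the paper: both hinge on the auxiliary functions $y_1(z)=1/z$ and $y_2(z)=-P_2^2\eta^{-1}/(z-P_2)$ (your $\eta^{-1}W$), which obey $y_i\circ s_i=y_i+1$, so that $y_i-y$ is $s_i$-invariant and holomorphic at $P_i$. The only difference is organizational: the paper works with $y_i^p-y_i$ and reads off $c_{i,1}=0$ and $\lambda_i$ by comparing degree~$1$ and degree~$p$ terms in $(z-P_i)^p(x-a_i)(y_i^p-y_i)$, whereas you first use ramification to get $c_{i,1}=\dots=c_{i,p-1}=0$, match the $(z-P_i)^{-p}$ coefficient of $y^p-y$ to obtain $\lambda_i=\alpha c_{i,p}A_i^p$, and then pin down $A_i$ by your fixed-point evaluation---which is exactly the holomorphicity of $y_i-y$ made explicit.
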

   \begin{proof}
 We put 
 \begin{align*}
 y_1(z) &:= \frac{1}{z} , \\
  y_2(z) &:= - \frac{P_2^2 \eta^{-1}}{z-P_2}.
 \end{align*}
 Then we have $y_i(s_i(z))=y_i(z) +1$ for $i=1,2$. We put  $ f_i :=y_i - y ,$  which is an $s_i$-invariant function on $V_{i,\varepsilon}$.
Since  $y_i $ and $y$ have poles of order $1$ at $P_i$ and we have $P_i= s_i(P_i)$,
the function $f_i$ is holomorphic at $ P_i $.
  We have 
  \begin{align}\label{sikiA}
  y_i^p-y_i = y^p-y +f_i^p-f_i  =\frac{\lambda_i}{x-a_i} + h_i,
  \end{align}
 where we put $$h_i  := f_i^p-f_i  + \sum_{j\neq i } \frac{\lambda_j}{x-a_j},$$ which is holomorphic at $ P_i $.

  For $i=1$, by multiplying the both hand sides of \eqref{sikiA} by $z^p(x-a_1)$, we have 
  $$(x-a_1) -  z^{p-1}(x-a_1) = \lambda_1 z^p + z^p (x-a_1) h_1.$$
  By comparing the degree $1$ terms and the degree $p$ terms with respect to $z$, we have 
   \begin{align*}
      \alpha  c_{1,1} & = 0 , \\
    \alpha     c_{1,p} - \alpha c_{1,1} & = \lambda_1 .
  \end{align*}
  Hence we have $ \lambda_1 =   \alpha  c_{1,p}  $.

For $i=2$,  by multiplying the both hand sides of \eqref{sikiA} by $(z-P_2)^p(x-a_2)$, we have 
\begin{align*}
  &(-P_2^2 \eta^{-1})^p(x-a_2) - (-P_2^2 \eta^{-1}) (z-P_2)^{p-1}(x-a_2) \\
  &= \lambda_2(z-P_2)^p + (z-P_2)^p (x-a_2) h_2.
  \end{align*}
  By comparing the degree $1$ terms and the degree $p$ terms with respect to $z-P_2$, we have 
  \begin{align*}
   (-P_2^2 \eta^{-1})^p   \alpha  c_{2,1} & = 0 , \\
   (-P_2^2 \eta^{-1})^p   \alpha     c_{2,p} -(-P_2^2 \eta^{-1})  \alpha c_{2,1} & = \lambda_2 .
  \end{align*}
  Since $P_2 \neq 0 $, $\eta \in K^\times$, and $\alpha \neq 0$, we have $c_{2,1}=0$.
  Hence we have  $ \lambda_2 = (-P_2^2 \eta^{-1})^p   \alpha  c_{2,p}  $.   
     \end{proof}
     
     \begin{lem}\label{lambdavert}
     We have $ \lvert \lambda_1 \rvert \leq  \lvert \eta \rvert^{p-1} \cdot \lvert P_2 \rvert^{-p}$ and
     $  \lvert \lambda_2 \rvert  \leq \lvert \eta \rvert^{-p} \cdot \lvert P_2  \rvert^p $.
          \end{lem}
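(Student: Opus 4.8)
The plan is to combine Lemma \ref{lambda=} with the product expansion $x(z) = \alpha\prod_{\gamma\in N}\frac{z - \gamma(P_1)}{z - \gamma(u)}$ and the magnitude estimates of Lemma \ref{huti}. Writing $F(z) := x(z)/\alpha$ and using $\lvert\alpha\rvert = 1$, Lemma \ref{lambda=} reduces the two assertions to the coefficient bounds $\lvert c_{1,p}\rvert \le \lvert\eta\rvert^{p-1}\lvert P_2\rvert^{-p}$ and $\lvert c_{2,p}\rvert \le \lvert P_2\rvert^{-p}$; indeed $\lvert\lambda_1\rvert = \lvert c_{1,p}\rvert$ and $\lvert\lambda_2\rvert = \lvert P_2\rvert^{2p}\lvert\eta\rvert^{-p}\lvert c_{2,p}\rvert$. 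I would treat the two coefficients by different but parallel devices, since in characteristic $p$ one cannot isolate $c_{i,p}$ from a logarithmic derivative of $F$ (the relevant term carries a $1/p$).

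For $\lambda_1$: since $P_1 = 0$ is the fixed point of $s_1$ and $N$ is the free product of the $\langle s_i\rangle$, the stabilizer of $P_1$ in $N$ is exactly $\langle s_1\rangle$; hence $\gamma(P_1) = 0$ precisely for the $p$ elements $\gamma\in\langle s_1\rangle$, so $F$ acquires an honest zero of order $p$ at $z = 0$. I would factor $F(z) = z^p G(z)$ with
$$G(z) = \Big(\prod_{n=0}^{p-1}(z - s_1^n(u))\Big)^{-1}\prod_{\gamma\notin\langle s_1\rangle}\frac{z - \gamma(P_1)}{z - \gamma(u)},$$
which is holomorphic near $0$, so that $c_{1,p} = G(0)$. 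It then remains to estimate $\lvert G(0)\rvert$. The first factor contributes $\prod_{n=0}^{p-1}\lvert s_1^n(u)\rvert = \lvert P_2\rvert$, by $\lvert u\rvert = \lvert P_2\rvert$ together with Lemma \ref{huti}(3); the second factor is $\le (\lvert\eta\rvert/\lvert P_2\rvert)^{p-1}$, because each of its terms is $\le 1$ by Lemma \ref{huti}(6), while the $p-1$ terms indexed by $\gamma = s_2^n$ ($1\le n\le p-1$) equal $\lvert\eta\rvert/\lvert P_2\rvert$ by Lemma \ref{huti}(1),(2). Multiplying the two contributions yields $\lvert c_{1,p}\rvert = \lvert G(0)\rvert \le \lvert\eta\rvert^{p-1}\lvert P_2\rvert^{-p}$.

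For $\lambda_2$: here $P_2$ is not a zero of $F$, so I would instead use a maximum-modulus estimate. By Lemma \ref{huti}(4) and (5), $\lvert P_2 - \gamma(P_1)\rvert = \lvert P_2 - \gamma(u)\rvert = \lvert P_2\rvert$ for every $\gamma\in N$; hence, for every $z$ with $\lvert z - P_2\rvert < \lvert P_2\rvert$, the ultrametric inequality forces $\lvert z - \gamma(P_1)\rvert = \lvert z - \gamma(u)\rvert = \lvert P_2\rvert$, so each factor of $F$ has absolute value $1$. Thus $F$ is holomorphic on the open disk $\{\,z \mid \lvert z - P_2\rvert < \lvert P_2\rvert\,\}$ with $\lvert F\rvert\equiv 1$ there. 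The non-archimedean Cauchy inequality (the Gauss norm on the closed disk of radius $\rho$ equals $\max_n\lvert c_{2,n}\rvert\rho^n$; now let $\rho\uparrow\lvert P_2\rvert$) then gives $\lvert c_{2,n}\rvert\lvert P_2\rvert^n \le 1$ for all $n$, and in particular $\lvert c_{2,p}\rvert \le \lvert P_2\rvert^{-p}$. Substituting into $\lvert\lambda_2\rvert = \lvert P_2\rvert^{2p}\lvert\eta\rvert^{-p}\lvert c_{2,p}\rvert$ gives $\lvert\lambda_2\rvert \le \lvert\eta\rvert^{-p}\lvert P_2\rvert^{p}$.

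The main obstacle, and the reason the two cases are handled separately, is exactly the characteristic-$p$ degeneration that makes the logarithmic-derivative route fail. The resolution is to never differentiate $p$ times: for $\lambda_1$ the coefficient is literally the value $G(0)$ of the product after removing the order-$p$ zero, and for $\lambda_2$ it is controlled by the size of $F$ on its largest disk of holomorphy. The only genuinely delicate point is verifying that this disk has radius exactly $\lvert P_2\rvert$ (equivalently, that the nearest poles $\gamma(u)$ lie at distance $\lvert P_2\rvert$ from $P_2$), which is precisely what Lemma \ref{huti}(5) supplies and what makes the estimate sharp enough to close the argument.
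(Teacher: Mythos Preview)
Your argument is correct and, for $\lambda_1$, essentially coincides with the paper's: the paper expands each factor $\frac{z-\gamma(P_1)}{z-\gamma(u)}$ around $P_1$ as $\sum_n u_{1,n}^{(\gamma)}(z-P_1)^n$, observes that $u_{1,0}^{(s_1^j)}=0$ for $0\le j\le p-1$, and hence identifies $c_{1,p}=\big(\prod_{\gamma\notin\langle s_1\rangle}u_{1,0}^{(\gamma)}\big)\big(\prod_{j}u_{1,1}^{(s_1^j)}\big)$, which is exactly your $G(0)$. The subsequent numerical estimate is identical.

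For $\lambda_2$ the paper takes a slightly different route: it keeps the product expansion, bounds each factor's coefficients by $\lvert u_{2,0}^{(\gamma)}\rvert=1$ and $\lvert u_{2,n}^{(\gamma)}\rvert\le\lvert P_2\rvert^{-n}$ (from the same Lemma~\ref{huti}(4),(5) you use), and concludes $\lvert c_{2,p}\rvert\le\lvert P_2\rvert^{-p}$ by the ultrametric inequality on the resulting sum. Your maximum-modulus/Cauchy argument is a cleaner repackaging of the same information and avoids manipulating the infinite product term by term; it also makes transparent why the bound is $\lvert P_2\rvert^{-p}$ (that being the radius of the largest pole-free disk). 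One small point: since $K$ is discretely valued, the step ``let $\rho\uparrow\lvert P_2\rvert$'' must be understood as taking $\rho\in\lvert\overline{K}^\times\rvert$ over finite extensions (which the paper freely allows); then $\lvert c_{2,n}\rvert\le\rho^{-n}$ for all such $\rho$ forces $\lvert c_{2,n}\rvert\le\lvert P_2\rvert^{-n}$ because $\lvert c_{2,n}\rvert$ lies in a fixed discrete value group. You might also note explicitly that the open disk $\{\lvert z-P_2\rvert<\lvert P_2\rvert\}$ lies in $\Omega$: since every $\gamma(u)$ satisfies $\lvert\gamma(u)-P_2\rvert=\lvert P_2\rvert$, the same holds for every limit point of $N$, so neither poles nor limit points enter the disk.
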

  \begin{proof}
  For each $\gamma \in N$,  $n \geq 1$, and $i=1,2$, we put
 \begin{align*}
  & u_{i,0}^{(\gamma)}:= \frac{P_i-\gamma(P_1)}{P_i-\gamma(u)}, \\
  & u_{i,n}^{(\gamma)}:= \frac{1+ u_{i,0}^{(\gamma)}}{ (P_i - \gamma(u) )^n}.
  \end{align*} 
  Since  $\varepsilon < \lvert P_i - \gamma(u) \rvert$ for any $\gamma \in N$,
   we have 
 $$ \frac{z-\gamma(P_1)}{z-\gamma(u)}= \sum_{n=0}^{\infty} u_{i,n}^{(\gamma)} (z-P_i)^n $$
 on $V_{i,\varepsilon}$. (For this calculation, see \cite[Section 3]{Ste83}.)
  Hence we have
  $$x(z)= \alpha \sum_{n=0}^{\infty} c_{i,n}(z-P_i)^n=\alpha \prod_{\gamma \in N} \frac{z-\gamma(P_1)}{z-\gamma(u)}
  = \alpha \prod_{\gamma \in N} \sum_{n=0}^{\infty} u_{i,n}^{(\gamma)} (z-P_i)^n .$$
  
   We shall estimate $\lvert c_{i,p} \rvert $ by calculating  $ \lvert u_{i,n}^{(\gamma)} \rvert $.

    For $i=1$, since $s_1^j(P_1)=P_1$, we have $ u_{1,0}^{(s_1^j)} =0 $ for $ 0 \leq j \leq p-1$. Hence we have  
   $$c_{1,p} = \left( \prod_{\gamma \in N \setminus \{s_1^j \}_{0 \leq j \leq p -1} } u_{1,0}^{(\gamma)} \right) 
  \left( \prod_{j=0}^{p-1} u_{1,1}^{(s_1^j)} \right). $$
  Recall that $P_1=0$.  By Lemma \ref{huti} (6),  we have $ \lvert u_{1,0}^{(\gamma)} \rvert \leq 1$ for  any $\gamma \in N $.
   By   Lemma \ref{huti} (1), (2), we have 
   $\lvert u_{1,0}^{(s_2^j)}  \rvert  =  \lvert \eta  \rvert  \cdot \lvert P_2  \rvert^{-1} $ for $1 \leq j \leq p-1.$
   Since  $u_{1,0}^{(s_1^j)}=0 $ for $1 \leq j \leq p-1$, by  Lemma \ref{huti} (3), we have
  $ \lvert u_{1,1}^{(s_1^j)} \rvert  = \lvert s_1^j(u) \rvert^{-1} =1 $.
  We denote the identity element  of $\PGL_2(K) $ by $\id$.
  Since  $ u_{1,0}^{(\id )}=0 $, 
  we have  $ \lvert u_{1,1}^{(\id)} \rvert = \lvert P_2  \rvert^{-1}$.
    Consequently, we have 
 $$ \lvert c_{1,p} \rvert  \leq \lvert \eta \rvert^{p-1} \cdot \lvert P_2 \rvert^{-p}.$$
  By Lemma \ref{lambda=},  since $\lvert \alpha \rvert =1$,  we have 
    $$  \lvert \lambda_1 \rvert  =\lvert \alpha \rvert \cdot \lvert c_{1,p} \rvert   \leq \lvert \eta \rvert^{p-1} \cdot \lvert P_2  \rvert^{-p} .$$ 
   
For $i=2$, 
by   Lemma \ref{huti} (4), (5),
we have 
$ \lvert u_{2,0}^{(\gamma)} \rvert = 1$ for any $\gamma \in N$.
By this equality and   Lemma \ref{huti} (5), 
we have
\begin{align*}
  \lvert  u_{2,n}^{(\gamma)} \rvert =  \frac{\lvert 1+ u_{2,0}^{(\gamma)} \rvert}{ \lvert P_2 - \gamma(u) \rvert^n }  \leq \lvert P_2 \rvert^{-n}
\end{align*}
for any $\gamma \in N$ and $n \geq 1$.
Therefore,  we have $$ \lvert c_{2,p} \rvert \leq \lvert P_2 \rvert^{-p}.$$
By Lemma \ref{lambda=},  since $\lvert \alpha \rvert =1$,  
we have
    $$  \lvert \lambda_2 \rvert  =\lvert P_2\rvert^{2p} \cdot \lvert \eta \rvert^{-p} \cdot \lvert \alpha \rvert \cdot \lvert c_{2,p} \rvert  
      \leq \lvert \eta \rvert^{-p} \cdot \lvert P_2  \rvert^{p} .$$ 
  \end{proof}

 
           
     By Lemma \ref{v} and Lemma \ref{lambdavert}, we have
\begin{align*}
  \lvert \lambda_1 \lambda_2 \rvert 
   \leq \lvert \eta \rvert^{-1} 
   < 1 
   = \lvert a_1-a_2 \rvert^2.
\end{align*}
   Recall that we have assumed $a_1=0$ and $a_2=1$.
   
   Theorem \ref{main} follows from this result and  the result  of Section \ref{if}.

   \subsection*{Acknowledgements}
   The author would like to express his deepest gratitude to his adviser  Tetsushi Ito for considerable and  invaluable guidances.
   He is also grateful to  Takahiro Tsushima for helpful comments about computation of reductions.


\begin{thebibliography}{99}

\bibitem{BGR}
\textsc{S. Bosch, U. G\"untzer and R. Remmert}, Non-Archimedean Analysis: A Systematic Approach to Rigid Analytic Geometry, Grundlehren der mathematischen Wissenschaften \textbf{261}, Springer, Berlin, 1984.

\bibitem{Bra07} 
\textsc{P.\ E. Bradley}, Cyclic coverings of the $p$-adic projective line by Mumford curves,
  Manuscripta Math.\ \textbf{124} (2007), no.\ 1, 77-95.
  
  \bibitem{CorKatoKont01} 
\textsc{G. Cornelissen, F. Kato and A. Kontogeorgis},  Discontinuous groups in positive characteristic and automorphisms of Mumford curves, Math.\ Ann.\ \textbf{320} (2001), no.\ 1, 55-85.
  
\bibitem{Fresnel-vanderPut:2004}
 \textsc{J. Fresnel and M.  van der Put},  Rigid analytic geometry and its applications,
  Progress in Mathematics \textbf{218},\ Birkh\"auser, Boston, 2004.

\bibitem{GerritzenvanderPut:SchottkyMumford} 
\textsc{L. Gerritzen and M.  van der Put},  Schottky groups and Mumford curves, Lecture Notes in Mathematics \textbf{817},\ Springer, Berlin, 1980. 

\bibitem{Lutk} 
 \textsc{W. L\"utkebohmert}, Rigid geometry of curves and their Jacobians,
  Ergebnisse der Mathematik und ihrer Grenzgebiete
  \textbf{61},\ Springer, Cham, 2016.

\bibitem{Mum} 
 \textsc{D. Mumford},  An analytic construction of degenerating curves over complete local rings,
  Compositio Math.\ \textbf{24} (1972), 129-174.

\bibitem{Tree}  
  \textsc{J.-P. Serre},  Trees,  Corrected 2nd printing of the 1980 English translation,\ Springer Monographs in Mathematics,\ Springer, Berlin, 2003.

\bibitem{Ste82} 
  \textsc{G. van Steen},  Galois coverings of the non-Archimedean projective line,
  Math.\ Z.\ \textbf{180} (1982), no.\ 2, 217-224.

\bibitem{Ste83} 
\textsc{G. van Steen}, Non-Archimedean Schottky groups and hyperelliptic curves,
 Nederl.\ Akad.\ Wetensch.\ Indag.\ Math.\ \textbf{45} (1983), no.\ 1, 97-109.

\end{thebibliography}
\end{document}